\theoremstyle{plain}
\newtheorem{theorem}{Theorem}[section]
\newtheorem{proposition}[theorem]{Proposition}
\theoremstyle{definition}
\newtheorem{example}{Example}[section]
\theoremstyle{remark}
\newtheorem{remark}{Remark}[section]
\title{An algorithm to determine LS-category and Ginsburg invariant of any rationally elliptic space}
\author[M. T. K. Abassi]{M. T. K. Abassi$^1$}
\address{$^1$Department of Mathematics, Sidi Mohamed Ben Abdellah University, Fès, Morocco}
\email{mohamed.abassi@usmba.ac.ma}
\author[A. Acharqy]{A. Acharqy$^2$}
\address{$^2$Department of Mathematics, Sidi Mohamed Ben Abdellah University, Fès, Morocco}
\email{abdelouhid.acharqy@usmba.ac.ma}
\author[K. Boutahir]{K. Boutahir$^3$}
\address{$^3$Department of Mathematics, Moulay Ismaïl University, Meknès, Morocco}
\email{k.boutahir@umi.ac.ma}
\author[Y. Rami]{Y. Rami$^4$}
\address{$^4$Department of Mathematics, Moulay Ismaïl University, Meknès, Morocco}
\email{y.rami@umi.ac.ma}
\keywords{Ext Eilenberg-Moore functor, Sullivan algebra, Gorenstein spaces}
\subjclass{Primary 55P62; Secondary 55T20}
\begin{document}
	
	\begin{abstract}
	Let $X$ be a rationally elliptic space. Utilizing the Gorenstein algebra structure of $X$, we present 
	%In this paper, using Sullivan's approach to rational homotopy theory of simply connected finite type CW-complexes, we give
	 three algorithms that together induce a generating class of $Ext^N_{(\Lambda V,d)}(\mathbb{Q},(\Lambda V,d))$ with $N$ being the formal dimension of $X$.
	 %representative of the fundamental class of a given rationally elliptic space $X$.
	 From these algorithms, we derive an algorithm to compute the rational Lusternik-Schnirelmann category $cat_0(X)$.
	  %is explicitly determined.
	  Furthermore, by applying a spectral sequence argument based on the {\it Eilenberg-Moore spectral sequence}, we compute the rational Ginsburg invariant $l_0(X)$ introduced by M. Ginsburg in \cite{Gin}.
	  %to give an alternative discription of one of these algorithms, we define an {\it Ext-version}, denoted $L_0(X)$ 
	  % and prove their coincidence when $X$ is rationally elliptic. An explicit calculation of $L_0(X)$ is also given.
	\end{abstract}

	\maketitle

	\section{Introduction}
	\label{intro}

Rational Homotopy Theory (RHT) studies homotopy theory "modulo torsion groups", through localization with respect to the empty set of primes. This construction associates to every simply connected space $X$ its \emph{rationalization}, denoted $X_{0}$, with the property that $\pi_n(X)\otimes \mathbb{Q}  \cong \pi_n(X_{0})$ for any integer $n$. Two such  spaces $X$ and $Y$ are said to be rationally homotopy equivalent, written $X \sim_{\mathbb{Q}} Y$, if their rationalizations $X_{0}$ and $Y_{0}$ are homotopy equivalent. Thus, RHT is the study of spaces up to rational homotopy equivalence, and $X$ is said to be \emph{rational} if the $\mathbb{Z}$-module $\pi_n(X)$ (or equivalently $H_{n}(X, \mathbb{Z})$)  is a $\mathbb{Q}$-vector space for each $n\geq 1$. Ignoring torsion in homotopy and homology groups of $X$ sacrifices many properties (e.g., those arising from Steenrod operations), but it yields a complete algebraic invariant of its rational equivalence class.

Before presenting our main results, we recall that  one of the most spectacular mathematical applications of RHT states that a sufficiently general simply connected Riemannian manifold has infinitely many closed geodesics \cite{VPS76}. We also cite the Allday-Halperin Theorem : “if a torus T acts freely in a homogeneous space $G/H$, then $\dim T \leq rank (G)-rank (H)$” \cite{Al-HA}. 
A strong appearance of RHT in Mathematical Physics, H. Sati's Hypothesis H suggests that the Sullivan minimal model of the $4$-sphere $\mathbb{S}^4$ captures the dynamics of fields in $M$-theory in the low-energy (UV) limit, also known as $11d$ supergravity \cite{Sat18}.

Within the theory, a fundamental result by Y. Félix and S. Halperin states that finite 1-connected CW-complexes and more generally spaces of finite Lusternik-Schnirelmann category cat($X$), are naturally distributed into two distinct classes: the {\it elliptic ones}  and the {\it hyperbolic ones}. The former are characterized by the fact that the ranks of their homotopy groups are almost all zero. Moreover, they all satisfy the Poincaré duality property over $\mathbb{Q}$ and their  Euler characteristics are non-negative. Recall that $X$ is said to be \emph{Poincaré duality space} over $\mathbb{Q}$ if its graded rational cohomology algebra $H^*(X,\mathbb{Q})$ satisfies the Poincaré duality property (cf. \S 2 below). For instance,  in Riemannian geometry, the simply connected Dupin hypersurfaces in $\mathbb{S}^n$ are elliptic. Moreover all known examples of simply connected positively curved manifolds are elliptic, and a conjecture of Bott asserts that they all should be \cite{FHT91, FHT93}.

Our work leverages Sullivan's algebraic approach to RHT, specifically the \emph{Sullivan minimal model} defined in \S 2. We will focus on spaces $X$ having the homotopy type of simply connected finite type CW-complexes. Each such space has a Sullivan minimal model i.e. a free commutative differential graded algebra  $(\Lambda V,d)$, where $V$ is a graded finite type vector space, which is equipped with a decomposable differential $d = d_k + d_{k+1} + \dots$ ($k\geq 2$).   This model satisfies  $H^*(X,\mathbb{Q})\cong H^*(\Lambda V,d)$ and $V \cong Hom_{\mathbb{Z}}(\pi_*(X),\mathbb{Q})$ \cite{Sul78}. In this context, $X$ is  (rationally) {\it elliptic} if and only if $V$ and  $H^*(\Lambda V,d)$ are both finite dimensional. Each elliptic space satisfies the Poincaré duality  property, hence, it is specified among other invariant, by its \emph{fundamental class}, denoted $\omega$. The degree of $\omega$, the maximal one, is called the \emph{formal dimension} of $X$. 
 % This will  give us an explicit interpretation of the rational Toomer invariant $e_0(X)$ (see below) which lowers $cat(X)$.
If only $\dim V$ is finite, referring to \cite{Mu94}, $X$ is in a larger class than that of Poincaré spaces, namely the class of  \emph{Gorenstein spaces}  (cf. \S 2 below for more details). One of the main ingredients we will use is  the {\it Milnor-Moore spectral sequence} \eqref{Milnor Moore spec} (see \S 2). This provides an algebraic definition of a good lower bound of the rational LS-category $cat_0(X):=cat(X_0)$, namely, the {\it rational Toomer invariant} $e_0(X)$ as follows:
 \begin{equation}\label{e_0}
e_0(X) = max\{m\in \mathbb{N}\; | \; E_{\infty}^{m,*}(\Lambda V)\neq 0\}
 \end{equation}
 with $E_{\infty}^{m,*}(\Lambda V)$, the infinity term of \eqref{Milnor Moore spec}.
  %as being the largest integer $p$ such that its infinity term $E_{\infty}^{p,q}$ is non-zero.
   Referring to \cite{FHL}, $cat_0(X)=e_0(X)$ for every elliptic space $X$.

  %in the sense that $Ext_{(\Lambda V,d)}(\mathbb{Q},(\Lambda V,d))$ is one dimensional
   %The generating class of the latter is also called, the formal dimension of $X$ which we denote by $N$.
%In all what follows, we consider spaces having the homotopy type of simply connected finite type CW-complexes. Let then
%$X$ be of this type and $(\Lambda V,d)$ its Sullivan minimal model. The  differential $d$  is decomposable i.e. $d = d_k + d_{k+1} + \dots$ ($k\geq 2$). 
Focusing on the differential, clearly its lower (homogeneous part) $d_k$ is also a differential so that $(\Lambda V,d_k)$ is also a commutative differential graded algebra (cdga for short). 
If  $(\Lambda V,d_k)$ is elliptic,  by the convergence of \eqref{Milnor Moore spec}, so is $(\Lambda V,d)$. Moreover, in such a case, we have  \cite[Proposition 3]{L-M02}:
%Referring to \cite{FHL}, when $X$  is elliptic,  $cat_0(X)=e_0(X)$. %interpreted in terms of $(\Lambda V,d)$ (see below) satisfies:
\begin{equation}\label{finite-cace}
e_0(X) := e_0(\Lambda V,d) = e_0(\Lambda V,d_k) = \dim V^{\text{odd}} + (k-2)\dim V^{\text{even}}.
\end{equation}
  %In fact this is also the value of $e_0(\Lambda V,d_{\sigma})$, where $(\Lambda V,d_{\sigma})$ is the pure Sullivan algebra associated to $(\Lambda V,d)$ introduced by S. Halperin in \cite{H77} (see below for more of its properties).
 
 The main goal of this work is to give an explicit algorithm to determine  $e_0(\Lambda V,d)$ when $(\Lambda V,d_k)$ is not necessarily elliptic, hence an exact value of $cat_0(X)$ for every arbitrary rationally elliptic space  $X$. 
 
 Thereafter, $\dim V$ is finite and, while $(\Lambda V,d_k)$ is not necessarily elliptic, we still suppose that  $(\Lambda V,d)$ is elliptic. Hence,  referring to  \cite{Mu94} we know that $(\Lambda V,d_k)$ and  $(\Lambda V,d)$ are {Gorenstein algebras} with the same {\it formal dimension} $N=\max\{r\; | H^r(\Lambda V,d_k)\neq0\}$. To treat this large class, we make use of  the convergence Eilenberg-Moore spectral sequence \eqref{Ext Milnor Moore spec} (see below). This  was firstly introduced in \cite{Mu94} and subsequently used in \cite{R2} to introduce, in the same spirit of $e_0(X)$, a {\it new lower bound for} $cat(X)$ which we denote $\mathrm{R}_0(X)$ and will use considerably below. 
% Another invariant  that is very useful to us because it links the Poincaré property to that of Gorenstein is the evaluation map 
  %$$ev_{(\Lambda V,d)}:   Ext_{(\Lambda V,d)}(\mathbb{K},(\Lambda V,d))  \longrightarrow    H(\Lambda V,d)$$
  %(cf. \S 2). Indeed, referring to  \cite{Mu94}, under the hypothesis  $\dim V <\infty$, we khow that 
  %$N=\max\{r\; |\{ H^r(\Lambda V,d_k)\neq0\}$
  % $(\Lambda V,d)$ is a Poincaré duality algebra if and only if it is elliptic.
 
 Using once more the hypothesis $\dim V < \infty$, we see easily that $H^N(\Lambda V,d_k)$ is finite dimensional. By a spectral sequence approach
combined with an algorithm à la Lechuga-Murillo \cite{L-M02}, the third and fourth authors showed in \cite{B-R} that one class $\omega_0 \in H^N(\Lambda V,d_k)$ survives to the infinity term of \eqref{Milnor Moore spec} and induces the fundamental class $\omega$ of $(\Lambda V,d)$. The Toomer invariant of $(\Lambda V,d)$ is then given explicitly by:
   \begin{equation}\label{e_0(omega)}
   e_0(\Lambda V,d) = e_0(\omega) = sup\{j\mid\exists \alpha\text{ such that } \omega =[\alpha]\text{ and }  \alpha \in (\Lambda ^{\geq j}V)^N\}.
   \end{equation}
Another cdga associated to $(\Lambda V,d)$ is the \emph{pure associated model} $(\Lambda V,d_{\sigma})$, \cite{FHTb, H77}. A spectacular result established by S. Halperin in  \cite{H77} states that $(\Lambda V,d)$ is elliptic if and only if $(\Lambda V,d_{\sigma})$ is. When $\dim V$ is finite,  $(\Lambda V,d_{\sigma})$ is also a Gorenstein algebra with  the same formal dimension as $(\Lambda V,d)$. 
  
 %By means of we know that $N$ still to be the formal dimension of $(\Lambda V,d)$ viewed as a Poincaré duaity space, so  $\omega_0$
% produces in fact
 
 In  \S 3 we give an algorithm to construct a generating class $[f_{\sigma}]$ of $Ext^N_{(\Lambda V,d_{\sigma})}(\mathbb{Q},(\Lambda V,d_{\sigma}))$. 
 In  \S 4, by identifying the $E_{\infty}$ term of the convergent spectral sequence \eqref{Ext Milnor Moore spec} 
 %(which is indeed isomorphic to the classical Eilenberg-Moore spectral sequence \eqref{E-M spec-seq}) 
 with   $Ext^N_{(\Lambda V,d)}(\mathbb{Q},(\Lambda V,d))$, we obtain, through an algorithm à la Lechuga-Murillo, a generating class $[f_k^t]$ of this latter from   the generating class  $[f_k]$ of $Ext^N_{(\Lambda V,d_k)}(\mathbb{Q},(\Lambda V,d_k))$. 
 Finally, in  \S 5 we use the spectral sequence \eqref{second spectral sequence} (see below),  introduced  in \cite{R1}, to build a generating class $[f_{\sigma}^l]$ of $Ext^N_{(\Lambda V,d)}(\mathbb{Q},(\Lambda V,d))$ from $[f_{\sigma}]$.
 
  By way of a summary, given an elliptic space $X$ with Sullivan minimal model $(\Lambda V,d)$, by applying the algorithm from in \S 3 to $(\Lambda V,(d_{k})_{\sigma})$ we obtain    the generating class, denoted $[f_{k,\sigma}]$, of $Ext^{N}_{(\Lambda V,(d_{k})_{\sigma})}(\mathbb{K},(\Lambda V,(d_{k})_{\sigma}))$. Then, noticing that $(d_{k})_{\sigma}= (d_{\sigma})_{k}$, we apply  the  algorithm developed in \S 4 to $(\Lambda V, (d_{\sigma})_{k})$
    to get the generating class $[f_{k,\sigma}^t]$ of $Ext^{N}_{(\Lambda V,d_{\sigma})}(\mathbb{K},(\Lambda V,d_{\sigma}))$. We then use the   algorithm described in \S 5 to $(\Lambda V, d_{\sigma})$ to obtain  the generating class $[(f_{k,\sigma}^ t)^l]$ of $Ext^{N}_{(\Lambda V,d)}(\mathbb{K},(\Lambda V,d)$. 
    %%%%%%%%%%%%%%%%%%%%%%%%%%%%%%%%%%%%%%%%%%%%%%%%%%%%%%%%%%%%%%%%%%%%%%%%%%%%%%%%%%%%%%%%%%%%%%%%%%%%%%%%%%%%%%%%%%%%%%%%%%%%%%%%%%%%%%%%%%%%%%%%%%%%%%%%%%%%%%%%%%%%%%%%%%%%%%%%%%%%%%%%%%%%%%%%%%%%%%%%%%
    
    As a summary, our algorithm complements those developed successively in \cite{Mu}, \cite{L-M02} and \cite{B-R}. Indeed, a combination of their contributions makes it possible to determine $e_0$ and therefore $cat_0$ in a quasi-complete way. The missing point is that this way does not allow us to know, when $H^*(\Lambda V,d_{k, \sigma})$ does not verify the Poincaré property, which of the generators of $H^N(\Lambda V,d_{k, \sigma})$ persists up to the infinite level of the spectral sequence \eqref{Milnor Moore spec}. Our work contributes to the extension of subsequent algorithms to the level of $Ext$ and thus takes advantage of the fact that $(\Lambda V,d_{k, \sigma})$ is a Gorenstein algebra.
    %%%%%%%%%%%%%%%%%%%%%%%%%%%%%%%%%%%%%%%%%%%%%%%%%%%%%%%%%%%%%%%%%%%%%%%%%%%%%%%%%%%%%%%%%%%%%%%%%%%%%%%%%%%%%%%%%%%%%%%%%%%%%%%%%%%%%%%%%%%%%%%%%%%%%%%%%%%%%%%%%%%%%%%%%%%%%%%%%%%%%%%%%%%%%%%%%%%%%

With the notations above, our main result reads as follows:
  \begin{theorem}\label{th1.1}
  Let $X$ be a rationally elliptic space. Then, its fundamental class is given by  $[(f_{k,\sigma}^t)^l(1)]$. Hence, $cat_0(X) = e_0(X) = e_0([(f_{k,\sigma}^t)^l(1)]).$
  \end{theorem}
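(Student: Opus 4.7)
\emph{Proof plan.}
The plan is to chain the three algorithmic constructions of \S 3, \S 4 and \S 5, identify the resulting class with the fundamental class $\omega$ of $X$ via Gorenstein duality, and then conclude using the formula \eqref{e_0(omega)} for $e_0$ in terms of the fundamental class together with the equality $cat_0(X)=e_0(X)$ valid for elliptic spaces \cite{FHL}.

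First, since $(\Lambda V,d)$ is elliptic, Halperin's theorem ensures that the pure associated model $(\Lambda V,d_{\sigma})$ is also elliptic, and by \cite{Mu94} both $(\Lambda V,d)$ and $(\Lambda V,d_{\sigma})$ are Gorenstein algebras of the same formal dimension $N$. The crucial preliminary observation is that the lower homogeneous part of $d_{\sigma}$ coincides with the pure model of $d_{k}$, that is $(d_{k})_{\sigma}=(d_{\sigma})_{k}$, so the starting cdga $(\Lambda V,(d_{k})_{\sigma})$ is unambiguously defined. Apply the \S 3 algorithm to this cdga to produce the generating class $[f_{k,\sigma}]$ of $Ext^{N}_{(\Lambda V,(d_{k})_{\sigma})}(\mathbb{Q},(\Lambda V,(d_{k})_{\sigma}))$.

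Second, apply the \S 4 algorithm, built on the convergent Eilenberg-Moore spectral sequence that perturbs from the $k$-homogeneous part of a differential to the full differential, to the cdga $(\Lambda V,(d_{\sigma})_{k})$; this lifts $[f_{k,\sigma}]$ to the class $[f_{k,\sigma}^{t}]$ generating $Ext^{N}_{(\Lambda V,d_{\sigma})}(\mathbb{Q},(\Lambda V,d_{\sigma}))$. Next, apply the \S 5 algorithm, based on the spectral sequence of \cite{R1} relating $d_{\sigma}$ back to $d$, to lift $[f_{k,\sigma}^{t}]$ to the class $[(f_{k,\sigma}^{t})^{l}]$ generating $Ext^{N}_{(\Lambda V,d)}(\mathbb{Q},(\Lambda V,d))$. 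Since $(\Lambda V,d)$ is Gorenstein of formal dimension $N$, this top Ext group is one-dimensional over $\mathbb{Q}$, so evaluating the generator at $1\in\mathbb{Q}$ yields a nonzero cohomology class of top degree $N$; ellipticity forces Poincaré duality so $H^{N}(\Lambda V,d)=\mathbb{Q}\omega$, and therefore $[(f_{k,\sigma}^{t})^{l}(1)]=\omega$ up to the normalization built into the algorithm. The identity $cat_{0}(X)=e_{0}(X)=e_{0}([(f_{k,\sigma}^{t})^{l}(1)])$ then follows from \eqref{e_0(omega)} and \cite{FHL}.

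The main obstacle I foresee is verifying, at each of the three algorithmic stages, that the transported class does not vanish and still sits in the correct bidegree, so that the output is a genuine generator of the relevant one-dimensional Ext group rather than a cohomologous zero arising from a coboundary produced by the perturbation of the differential. This nondegeneracy is precisely what the Lechuga-Murillo-type inductions in \S 3-\S 5 are designed to guarantee, by explicitly tracking the filtration degree and confirming preservation of the Gorenstein formal dimension at every transition between $(d_{k})_{\sigma}$, $d_{\sigma}$ and $d$.
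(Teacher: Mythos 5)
Your proposal is correct and follows essentially the same route as the paper: chain the three algorithms of \S 3--\S 5, use non-vanishing of the evaluation map for elliptic Sullivan algebras to identify $[(f_{k,\sigma}^t)^l(1)]$ with the fundamental class, and conclude via \eqref{e_0(omega)} and \cite{FHL}. The only (minor) divergence is where the non-vanishing is certified --- the paper notes that $f_{k,\sigma}(1)$ may well be a coboundary, checks $ev_{(\Lambda V,d_{\sigma})}([f_{k,\sigma}^t])=[f_{k,\sigma}^t(1)]\neq 0$ at the intermediate pure stage (since $(\Lambda V,d_{\sigma})$ is elliptic) and transports this through the convergence of the odd spectral sequence \eqref{odd spec}, whereas you apply the evaluation map directly to $(\Lambda V,d)$; just note that one-dimensionality of the Gorenstein $Ext$ group alone does not give that the generator evaluates to a non-zero class, so the explicit appeal to Murillo's criterion $\dim H^{*}(\Lambda V,d)<\infty \Leftrightarrow ev_{(\Lambda V,d)}\neq 0$ \cite{Mu2} is needed at that point.
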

Note that in \cite[Proposition 20]{Le02}, L. Lechuga constructed an algorithm to compute the rational category of any elliptic space under the condition $(d - d_{\sigma})V \subseteq \Lambda^{>l_{\sigma}}V$ where $l_{\sigma}=l_0(\Lambda V,d_{\sigma})$. This algorithm is based on Gr\"oebner basis calculus, which computes $l_{\sigma}$. Our result is therefore a generalization of Lechuga's. Moreover, using the algorithm given  in \S 4, 
we obtain an explicit formula for $l_0(X)$ as follows:
\begin{theorem}\label{th1.2}(Theorem \ref{L-inv})
Let $X$ be a rationally elliptic space. Then, $l_0(X)=L_0(X) = t-1$.
\end{theorem}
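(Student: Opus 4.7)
The plan is to establish the two equalities $l_0(X)=L_0(X)$ and $L_0(X)=t-1$ separately, reading both invariants off the Eilenberg--Moore spectral sequence \eqref{Ext Milnor Moore spec}. Since the algorithm developed in \S 4 is the engine that produces $t$, I would first review how $t$ is characterised: it is the least integer for which the successive liftings $f_k,f_k^1,\dots,f_k^t$ stabilise to a permanent cocycle representing a generator of $Ext^N_{(\Lambda V,d)}(\mathbb{Q},(\Lambda V,d))$, each intermediate $f_k^s$ being produced to kill an obstruction class living one filtration higher. In other words, the construction realises the generator as surviving exactly through the pages $E_2,\dots,E_t$ of \eqref{Ext Milnor Moore spec}, while at page $t+1$ no further lifting is required.

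For the equality $L_0(X)=t-1$, the idea is essentially tautological once the algorithmic interpretation of $L_0$ is unpacked: $L_0(X)$ is defined to be the largest $r$ such that a non-trivial differential $d_r$ occurs in the column computing the generator of $Ext^N$, equivalently the page at which the class $[f_k]$ ceases to be a permanent cycle in the spectral sequence truncated at step $r$. The Lechuga--Murillo-type iteration in \S 4 is built precisely to compensate, step by step, for these differentials, so that the index $s$ matches the page $E_{s+1}$; hence the algorithm exits at $s=t$ when, and only when, we reach $r=t-1$, giving $L_0(X)=t-1$.

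For $l_0(X)=L_0(X)$, I would invoke the definition of the Ginsburg invariant from \cite{Gin} and show that it coincides, after rationalisation, with the filtration-index invariant extracted from \eqref{Ext Milnor Moore spec}. Concretely, Ginsburg's invariant is the largest page number of a Serre-type spectral sequence at which a certain class detects the fundamental class; its $\mathbb{Q}$-localisation is governed by the Eilenberg--Moore spectral sequence of Murillo used throughout the paper, so the two page numbers agree under the Gorenstein identification of $E_\infty$ with $Ext^N_{(\Lambda V,d)}(\mathbb{Q},(\Lambda V,d))$. This step is the main obstacle: one must be careful that the filtration used by Ginsburg, which is geometric, matches degree-for-degree the word-length filtration on $\Lambda V$ used in \S 4, and in particular that no shift appears. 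The cleanest route is to run both spectral sequences on the same minimal Sullivan model $(\Lambda V,d)$ and use Theorem \ref{th1.1} — which pins down the fundamental class as $[(f_{k,\sigma}^t)^l(1)]$ — to verify that the detecting class on the two sides is the same, forcing the page indices to coincide.
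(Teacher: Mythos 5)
Your first half --- that the iteration indices of the \S 4 algorithm track the pages of the spectral sequence \eqref{Ext Milnor Moore spec}, so that it degenerates exactly at page $t$ and hence $L_0(X)=t-1$ --- is exactly the paper's argument, and is fine.

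The second half, $l_0(X)=L_0(X)$, has a genuine gap. You correctly identify the crux (matching the degeneration page of \eqref{Ext Milnor Moore spec} with that of the spectral sequence defining $l_0$), but you never supply the mechanism that makes the pages match. Note first that the passage from Ginsburg's topological definition to the algebraic Milnor--Moore spectral sequence \eqref{Milnor Moore spec} is already carried out in the paper's setup of \S 4.1.1 (the reindexing paragraph comparing \eqref{Top-M-M spec-seq} with \eqref{Milnor Moore spec}), so that is not where the work lies; the essential remaining comparison is between \eqref{Milnor Moore spec} and \eqref{Ext Milnor Moore spec}, two \emph{different} spectral sequences built on the same Sullivan model. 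The paper's proof handles this via the chain evaluation map $cev\colon f\mapsto f(1)$ of \eqref{cevmap}: it carries the filtration \eqref{filt-Ext-MM spec} into the word-length filtration \eqref{fil-MM-spec}, hence induces a morphism of spectral sequences, and under it the generator $[f^t]$ maps to the fundamental class $[f^t(1)]$, which by \cite[Theorem 2]{B-R} (cf.\ Remark \ref{rem4.2}) is itself produced by a parallel Lechuga--Murillo-type iteration terminating at exactly the same step $t$; this is what forces \eqref{Milnor Moore spec} to degenerate at page $t$ as well. Your proposed substitute --- run both spectral sequences on the same model and use Theorem \ref{th1.1} to see that the detecting class is the same --- does not achieve this: a common detecting class in the abutment does not by itself force two spectral sequences to degenerate at the same page (one could a priori collapse earlier than the other), and Theorem \ref{th1.1} is proved later and says nothing about page indices. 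Without the evaluation map as the comparison morphism and the step-for-step correspondence of the two algorithms, the equality $l_0(X)=L_0(X)$ remains asserted rather than proved.
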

This in particular gives a method to compute $l_0(\Lambda V,d)$ for any cdga $(\Lambda V,d)$ which is in fact alternative and more general than the one given in \cite{Le02}.
%compute of this invariant by means of Groebner basis calculus.
Recall that  $L_0(X)$ is the {\it Ext-Ginsburg invariant} introduced recently in \cite{A-R}  in the spirit of the  Ginsburg's classical invariant $l_0(X)$.

Using this coincidence between $l_0(X)$ and $L_0(X)$ we give an alternative and explicit proof of  \cite[Theorem 1]{A-R} which improves \cite[Theorem 1]{R2} as follows:
\begin{theorem}\label{th1.3} (Theorem \ref{R-inv})
Let $X$ be a rationally elliptic space. Then, $\mathrm{R}_0(X) = e_0(X) = cat_0(X)$.
\end{theorem}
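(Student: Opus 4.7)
The plan is to leverage Theorem \ref{th1.2}, which identifies $l_0(X) = L_0(X) = t-1$, together with Theorem \ref{th1.1} which pins down the fundamental class as $[(f_{k,\sigma}^t)^l(1)]$, in order to reduce Theorem \ref{th1.3} to tracking the filtration stage at which a generator of $Ext^N_{(\Lambda V,d)}(\mathbb{Q},(\Lambda V,d))$ survives in the Eilenberg-Moore spectral sequence \eqref{Ext Milnor Moore spec}.

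First I would record the standard chain $\mathrm{R}_0(X) \leq e_0(X) \leq cat_0(X)$: the right-hand equality is the Félix-Halperin-Lemaire theorem \cite{FHL} for elliptic spaces, while the left-hand inequality is the content of \cite[Theorem 1]{R2}. It therefore suffices to prove the reverse inequality $\mathrm{R}_0(X) \geq e_0(X)$.

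Next, by Theorem \ref{th1.1} the class $[(f_{k,\sigma}^t)^l(1)]$ represents the fundamental class of $(\Lambda V,d)$, and $e_0(X)$ is computed via \eqref{e_0(omega)} as the largest word-length $j$ such that this class admits a representative in $(\Lambda^{\geq j}V)^N$. Since Theorem \ref{th1.2} gives $L_0(X) = t-1$ with $t$ being precisely the stage of \eqref{Ext Milnor Moore spec} at which our explicit representative stabilizes, the pair $(t,l)$ encodes the filtration of the Ext spectral sequence jointly with the word-length filtration that computes $e_0(X)$. Unpacking the algorithms from \S 4 and \S 5 to make this encoding explicit, one should be able to read off the equality $\mathrm{R}_0(X) = e_0(X)$ directly from the construction of $[(f_{k,\sigma}^t)^l]$.

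The main obstacle will be this last step: verifying that the joint filtration data tracked by the algorithm coincides with the filtration defining $\mathrm{R}_0$. Concretely, one has to check that the survival stage of $[(f_{k,\sigma}^t)^l(1)]$ in \eqref{Ext Milnor Moore spec} matches the word-length filtration of its image in $H^N(\Lambda V,d)$ under the Gorenstein duality isomorphism between $Ext^N_{(\Lambda V,d)}(\mathbb{Q},(\Lambda V,d))$ and the top cohomology line \cite{Mu94}, after which chaining with $e_0(X) = cat_0(X)$ from \cite{FHL} closes the circle of equalities.
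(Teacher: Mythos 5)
Your outline gets the frame right (easy inequalities plus the evaluation map as the bridge between $Ext$ and cohomology), but the decisive step is exactly the one you defer: you reduce to $\mathrm{R}_0(X)\ge e_0(X)$ and then say that ``one should be able to read off the equality'' by unpacking the algorithms of \S 4 and \S 5 and tracking the word length of $[(f_{k,\sigma}^t)^l(1)]$. That is a genuine gap, and moreover the proposed route would not close it: tracking one explicit representative can only show that the fundamental class admits a representative in $\Lambda^{\ge p'}V$, i.e.\ $e_0(X)\ge p'$, whereas the equality $e_0(X)=p'$ also requires the upper bound that no representative lies in deeper word length. The paper's proof does not unpack the algorithms at all. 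It argues structurally: (i) since $(\Lambda V,d_k)$ is Gorenstein, $Ext_{(\Lambda V,d_k)}(\mathbb{K},(\Lambda V,d_k))$ is concentrated in a single bidegree $(p',q')$ with $p'+q'=N$, and convergence of \eqref{Ext Milnor Moore spec} forces the same for $Ext_{(\Lambda V,d)}(\mathbb{K},(\Lambda V,d))$, which immediately gives $\mathrm{R}_0(X)=p'$ with no algorithm needed; (ii) the chain evaluation map $f\mapsto f(1)$ preserves the filtrations \eqref{filt-Ext-MM spec} and \eqref{fil-MM-spec}, hence induces a morphism of spectral sequences from \eqref{Ext Milnor Moore spec} to \eqref{Milnor Moore spec} whose $E_\infty$-stage in bidegree $(p',q')$ is an isomorphism onto $E_\infty^{p',q'}(\Lambda V)\cong H^N(\Lambda V,d)$ (both sides being one-dimensional and $ev\neq 0$ by ellipticity); combined with Theorem \ref{L-inv} this pins $e_0(X)=p'$ from both sides. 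Only then is $e_0(X)=cat_0(X)$ from \cite{FHL} invoked.

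Two smaller points. First, your attribution of $\mathrm{R}_0(X)\le e_0(X)$ to \cite[Theorem 1]{R2} is shaky: that inequality for elliptic spaces follows from the nonvanishing of the evaluation map (a cocycle $f\in F^m(A)$ representing $\Omega$ gives $f(1)\in\Lambda^{\ge m}V$ representing $\omega$), and in any case the paper does not need it, since it proves both invariants equal $p'$ directly rather than sandwiching. Second, your plan routes through Theorem \ref{th1.1} and the \S 5 algorithm for $(f_{k,\sigma}^t)^l$, which is heavier machinery than the proof requires; the paper's argument uses only the Gorenstein concentration in a single bidegree and the filtration-compatibility of $cev$, which is where you should focus if you rewrite the proof.
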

  
 We present the proofs of Theorems \ref{th1.2} and \ref{th1.3} in \S 4, while the proof of Theorem \ref{th1.1} is reported in \S 5. Section \S 6, entitled "Concluding remarks", is devoted to some comments on our main result as well as its interaction with previous results on the subject.

\section{Preliminaries}
In this section, we review essential tools used in this work. Unless otherwise stated, all algebraic structures are defined over a field $\mathbb{K}$.
\subsection{Sullivan Models}
In this subsection, we assume $\mathbb{K}$ of characteristic zero. The standard reference for this section is \cite{FHTb}.

Let  $V=\oplus_{i\geq 1}V^{i}$ be a graded vector space over $\mathbb{K}$. The free  commutative graded algebra over $V$, denoted $\Lambda V$, is the quotient of the tensor graded algebra $TV$ by the graded ideal $I$ generated by homogeneous elements of the form $x\otimes y-(-1)^{| x | | y |}y\otimes x$; $x, y \in V$ where $|z|$ denotes the degree of $z\in V$. Thus,
$\Lambda V=Exterior(V^{\text{odd}})\otimes\,Sym(V^{\text{even}})=\bigoplus_{i\geq 0}\Lambda ^{i} V$
with $\Lambda ^{i} V$ denoting the linear span of the elements $v_{1}^{n_{1}}\ldots v_{r}^{n_{r}}$; $n_{1}+\cdots+n_{r}=i$ and $n_{i}=1$ if $v_{i}\in V^{\text{odd}}$.

  A \emph{Sullivan algebra} is a commutative cochain algebra  $(\Lambda V,d)$ generated by the graded vector space $V=\oplus_{i\geq 1}V^{i}$ that is the union of an increasing sequence of graded subspaces $\{V(k)\}_{k\geq 0}$ such that $d=0$ on $V(0)$ and $d: V(k)\rightarrow \Lambda V(k-1),\; k\geq 1$. In other words, $d$ preserves each  $\Lambda V(k)$ and there exists a subspace $V_k\subseteq \Lambda V(k)$ such that $\Lambda V(k) = \Lambda V(k-1)\otimes V_k$ and $d: V_k\rightarrow \Lambda V(k-1),\; k\geq 1$. 
  Such an algebra is called \emph{minimal} if moreover:
  $$Im(d) \subseteq \Lambda ^+V\otimes \Lambda ^+V.$$
  If $(\Lambda V,d)$ is $1$-connected, that is $V^{1}=0$, then  $(\Lambda V,d)$ is minimal if and only if 
  $$Im(d)\subset \Lambda^{\geq 2} V=\oplus_{i\geq 2}\Lambda ^{i} V.$$

  A \emph{Sullivan model} for a cdga $(A,d)$ is a quasi-isomorphism (i.e. a morphism inducing an isomorphism in cohomology):
 \begin{center}
  $ m:\, (\Lambda V ,d)\stackrel{\simeq}{\rightarrow}(A,d)$
 \end{center}
 from a Sullivan algebra $(\Lambda V ,d)$. Often we simply say that $(\Lambda V,d)$ is a model  of $(A,d)$.
 
 Referring to \cite[ Proposition 12.2]{FHTb} we know that any commutative  \emph{cohomologically $1$-connected} cdga $(A,d)$, in the sense that $H^{0}(A)=\mathbb{K}$ and $H^{1}(A)=0$,  has minimal Sullivan model.
 
 If $X$ is a path-connected topological space, a Sullivan model
 \begin{center}
   $ m:\, (\Lambda V ,d)\stackrel{\simeq}{\rightarrow} C^{*}(X,\mathbb{K})$
 \end{center}
 of  the cochains $C^{*}(X,\mathbb{K})$ on $X$, is called a \emph{Sullivan model for $X$}. In particular, if $X$ is $1$-connected, then it admits a minimal Sullivan model.

A \emph{KS-extension} of  an augmented cdga $\epsilon:\, (A,d)\rightarrow (\mathbb{K},0)$ (i.e. a cdga with the algebra morphism $\epsilon$ sending the ideal of augmentation $A^+= A^{>0}$ to $0$) is an exact sequence of cdga morphisms
 $$(A,d_A)\rightarrow (A\otimes \Lambda V,d_{A\otimes \Lambda V} )\rightarrow (\Lambda V,\overline{d} )$$ where, $d_{A\otimes \Lambda V}$ restricts to $d_A$ on $A$, $dv_{i}\in A\otimes \Lambda V_{<i}$, and  $(\Lambda V,\bar{d})$ is a Sullivan algebra with differential  $\bar{d}$ denoting the obvious quotient differential induced by $d_{A\otimes \Lambda V}$ on $V$.
 
 \subsection{Differential Ext}
 In what follows, we fix $(R,d)$ as a cdga over $\mathbb{K}$.
 
A left $(R,d)$-module $(M,d)$ is said to be \emph{semifree} if $M$ is the union of an increasing sequence $\{M(k)\}_{k\geq 0}$
of $(R,d)$-submodules such that $M(0)$ and each of the quotients $M(k)/M(k-1)$ are $R$-free modules on a basis of cycles.
Such an increasing sequence is called a \emph{semifree filtration} of $(M,d)$.

A {semifree resolution} of an $(R,d)$-module $(A,d)$ is an $(R,d)$-semifree module $(M,d)$ together with a quasi-isomorphism
\begin{center}
 $ m:\, (M,d)\stackrel{\simeq}{\rightarrow}(A,d)$
\end{center}
of $(R,d)$-modules. Recall from \cite[Proposition 6.6]{FHTb} that:
  \begin{enumerate}
    \item  Every $(R,d)$ module $(A,d)$ has a semifree resolution $m:\, (M,d)\stackrel{\simeq}{\rightarrow}(A,d)$.
    \item If $m':\, (M',d)\stackrel{\simeq}{\rightarrow}(A,d)$ is a second semifree resolution then there is an equivalence of $(R,d)$-modules $\alpha: (M',d)\rightarrow (M,d)$ such that $m \circ \alpha\sim m'$.
  \end{enumerate}
  
\begin{remark}\label{form}
  Let $\{M(k)\}$ be a semifree filtration of $(M,d)$. Then $M(0)$ and each $M(k)/M(k-1)$ have the form $(R,d)\otimes (Z(k),0)$ where $Z(k)$ is a free $\mathbb{K}$-module and so that each surjection $M(k) \rightarrow R\otimes Z(k)$ splits as follows:
  \begin{center}
    $M(k)=M(k-1)\oplus( R\otimes Z(k))$, and $d:\,Z(k)\rightarrow M(k-1)$.
  \end{center}
 Thus, forgetting the differentials, we conclude that $M=R\otimes(\bigoplus_{k=0}^{\infty})$ is a free $R$-module.
\end{remark}

Let $\eta:\, (P,d)\rightarrow (Q,d)$ be a morphism of $(R,d)$-modules, and for any third $(R,d)$-module $(M,d)$, denote by
$$
\begin{array}{lccl}
  Hom_{R}(M,\eta): &  Hom_{R}(M,P) & \rightarrow &  Hom_{R}(M,Q)\\
                   &    \varphi    &   \mapsto   &  \eta \circ\varphi
\end{array}
$$
the morphism of complexes induced by $\eta$.
Referring to \cite[Proposition 6.4]{FHTb}, if $(M,d)$ is a semifree $(R,d)$-module and $\eta$ is a quasi-isomorphism, then
  \begin{itemize}
    \item $Hom_{R}(M,\eta)$ is a quasi-isomorphism.
    \item Given a diagram of morphism of $(R,d)$-modules,
  \begin{equation}
  \begin{array}{c c c}
  & & (P,d)\\
  & & \downarrow \simeq \eta\\
  (M,d) & \stackrel{\psi}{\rightarrow} & (Q,d)
  \end{array}
  \end{equation}  

there is a unique homotopy class of morphisms $\phi:\,(M,d) \rightarrow (P,d)$ such that $\eta \circ \phi \sim \psi$.
\item A quasi-isomorphism between semifree $(R,d)$ modules is an equivalence.
  \end{itemize}

The second property is called \emph{the lifting lemma}.

Given $f:\,M\rightarrow M'$ and $g:\,P'\rightarrow P$ two $R$-linear morphisms between left $(R,d)$-modules and $h:\, Q\rightarrow Q'$  an $R$-linear morphism between right $(R,d)$-modules. We  define

\begin{center}
  $Hom_{R}(f,g):\, Hom_{R}(M',P')\rightarrow Hom_{R}(M,P)$ by $\phi \longmapsto (-1)^{\deg f(\deg g+\deg\phi)}g \circ \phi \circ f.$
\end{center}
and
\begin{center}
  $h\otimes_{R}f:\, Q\otimes_{R}M\rightarrow Q'\otimes_{R}M'$ by $q\otimes_{R}m\longmapsto (-1)^{\deg f \deg q}h(q)\otimes_{R}f(m)$.
\end{center}
If moreover $f$, $g$ and $h$ commute with the differentials then $Hom_{R}(f,g)$ and $h\otimes_{R}f$ are morphisms of graded chain complexes.
Furthermore, if $(M,d)$ and $(M',d)$ are $(R,d)$-semifree, then \cite[Proposition 6.7]{FHTb} the following holds:
  \begin{enumerate}
    \item If $f$ and $g$ are quasi-isomorphisms then so is $Hom_{R}(f,g)$.
    \item If $f$ and $h$ are quasi-isomorphisms then so is $h\otimes_{R}f$.
  \end{enumerate}

Given two $(A,d)$-modules $(M,d)$ and $(N,d)$ and  $(P,d)$ a semifree resolution of $(M,d)$. The \emph{Eilenberg-Moore generalized functor} called here  the \emph{Ext differential}  is defined as follows:
$$
  Ext_{(A,d)}((M,d),(N,d))=H(Hom_{A}(P,N), D).
$$
It follows from the previous properties that this functor is independent of the choice of the semifree resolution of $(M,d)$. More precisely
$$Hom_{A}(P,N)=\bigoplus_{p\geq 0}Hom_{A}^{p,*}(P,N)=\bigoplus_{p\geq 0}\bigoplus_{q\geq 0}Hom_{A}^{p,q}(P,N)$$
where   
$Hom_{A}^{p,q}(P,N)=Hom_{A}(P^{q},N^{p+q})$
 has the differential $D$ defined for any  $f: P^q\rightarrow N^{p+q}$, of degree $p$, by 
\begin{equation}\label{differential}
D(f)=d_N\circ f+(-1)^{p+1}f\circ d_P.
\end{equation}
So, we obtain the   graduation:
$$Ext_{A}(M,N) = \oplus_j[Ext_{A}(M,N)]^j= \oplus_j [\oplus_{p} Ext_{A}^{p,j-p}(M,N)] = \oplus_j [\oplus_{p}H^{p,j-p}(Hom_{A}(P,N))]$$
where $H^{p,j-p}(Hom_{A}(P,N))$ stands for the cohomology at the middle term in the following complex:
$$ \cdots \stackrel{D}{\rightarrow} Hom_{A}(P^{j-p},N^{j-1})\stackrel{D}{\rightarrow} Hom_{A}(P^{j-p},N^{j})\stackrel{D}{\rightarrow} Hom_{A}(P^{j-p},N^{j+1})\stackrel{D}{\rightarrow} \cdots .$$
$p$ is called \emph{the cohomological degree}, $q=j-p$ \emph{the internal or the complementary degree} and $j$  \emph{the total degree}.
\subsection{Gorenstein spaces and the evaluation map}

In this subsection we consider cohomologically $1$-connected cdga's and  $1$-connected spaces.

An augmented cdga $(A,d)$ is called a \emph{Gorenstein dga} or simply a \emph{Gorenstein algebra} if the graded $\mathbb{K}$-vector space $Ext_{A}(\mathbb{K},A)$ is one-dimensional. The generating class $\Omega$ of $Ext_{A}(\mathbb{K},A)$ and its degree $|\Omega|$, denoted $fd(A,d)$, are called the \emph{fundamental class} and \emph{formal dimension} of $(A,d), respectively$. 

  A pointed topological space $X$ is said to be a \emph{Gorenstein space} over $\mathbb{K}$ (with formal dimension denoted $fd(X)$) if $C^{*}(X,\mathbb{K})$ is a  Gorenstein algebra.
Referring to \cite{FHT88}, given a quasi-isomorphism $A\stackrel{\simeq}{\rightarrow}B$ of augmented cdga's, we obtain the following chain of isomorphisms
\begin{equation}\label{evv}
  Ext_{(A,d)}(\mathbb{K},(A,d))\stackrel{\cong}\rightarrow Ext_{(A,d)}(\mathbb{K},(B,d))\stackrel{\cong}\longleftarrow Ext_{(B,d)}(\mathbb{K},(B,d)).
\end{equation}

This composition will be viewed as an identification, that is:
$$
Ext_{(A,d)}(\mathbb{K},(A,d))= Ext_{(B,d)}(\mathbb{K},(B,d)).
$$
Thus, if  $(\Lambda V,d)$ is a Sullivan model of $(A,d)$ (resp. of $X$), the latter   is Gorenstein over $\mathbb{K}$ if and only if $(\Lambda V,d)$ is.
This holds when $V$ is finite dimensional \cite[Theorem A]{Mu94}. If moreover, $\dim H^{*}(\Lambda V,d)<\infty$, then $H^{*}(\Lambda V,d)$ satisfies Poincaré duality property over $\mathbb{K}$ and $fd(\Lambda V,d):=N$ is exactly the degree of its fundamental class $[\omega]$; the unique generating element of $H^N(\Lambda V, d)$ \cite[Theorem A]{Mu94}. Recall that a graded algebra $A$ of finite dimension is called of \emph{formal dimension} $m$ if $A^m\neq0$ and $A^{>m} =0$. Such an algebra is said a \emph{Poincaré duality algebra} or it satisfies \emph{Poincaré duality property} over $\mathbb{K}$ if it is commutative and satisfies the following conditions:
	\begin{enumerate}
	\item $A^m \cong \mathbb{K}\omega$.
	\item $\forall p, A^p\otimes A^{m-p} \rightarrow A^m \cong \mathbb{K}$ is a nondegenerate bilinear form.
	\end{enumerate} 
	The generating element $\omega$ of degree $m$ is called the \emph{fundamental class} of $A$.

Let $(A,d)$ be an augmented cdga and consider $\mathbb{K}$ as a cdga with  differential $d_{\mathbb{K}}=0$. The chain evaluation map 
\begin{equation}
\begin{array}{cccc}\label{cevmap}
cev: &  Hom_{(A,d)}((P,d),(A,d)) &  \rightarrow  &  (A,d)\\
    &       f  &  \mapsto  &   f(p)
\end{array}
\end{equation}
 induces in homology, 
 the \emph{evaluation map} \cite{FHT88, Mu2} is the linear map
\begin{equation}\label{evmap}
\begin{array}{cccc}
ev_{(A,d)}: &  Ext_{(A,d)}(\mathbb{K},(A,d))  & \rightarrow  &  H(A,d)\\
        & [f]  & \mapsto &  [f(p)]
\end{array}
\end{equation}
Here $[f]$ is a cohomology class  represented by a cocycle $f: (P,d) \rightarrow (A,d)$, where $(P,d)\stackrel{\simeq}{\rightarrow} (\mathbb{K},0)$ is an $(A,d)$-semifree resolution of $\mathbb{K}$, and $p$ is a cocycle in $(P,d)$ representing $1_{\mathbb{K}}$.
Again from (\ref{evv}) we deduce that $ev_{A}$ is preserved by  quasi-isomorphisms.

The evaluation map $ev_{X}$ of a pointed topological space $X$ is by definition that  of $C^{*}(X;\mathbb{K})$. Thus, if $(\Lambda V,d)$ is a minimal Sullivan model of $X$, we will make the following identification in what follows:
$$ev_X = ev_{(\Lambda V,d)}:  Ext_{(\Lambda V,d)}(\mathbb{K},(\Lambda V,d))  \rightarrow  H(\Lambda V,d).$$

Referring to \cite[Theorem A]{Mu2}, when $\dim V<\infty$ we have:
 $$dimH^{*}(\Lambda V,d)<\infty \Leftrightarrow ev_{(\Lambda V,d)}\neq0$$ or equivalently, any Gorenstein algebra satisfies Poincaré duality property if and only if it has a non-zero evaluation map.
 
\subsection{Spectral sequences}
A cohomology \emph{spectral sequence} (see \cite{Mc, FHTb}) is a sequence of differential bi-graded complexes $(E_{r}^{*,*},d_{r},\sigma_{r})$, $r\geq s$ (some integer $s$) over $\mathbb{K}$  with $E_{r}^{*,*}=\{E_{r}^{p,q}\}$, $d_{r}$ is a differential of bi-degree $(r,-r+1)$ and  $\sigma_{r}:\, H(E_{r}^{*,*})\stackrel{\cong}{\rightarrow} E_{r+1}^{*,*}$ is an isomorphism of bi-graded complexes.

The spectral sequence $(E_{r},d_{r})$ is called \emph{convergent} if for each $(p,q)$ there is an integer $r(p,q)$ such that $$ E^{p,q}_{n} = E^{p,q}_{r(p,q)},\;\;  \forall n\geq r(p,q).$$
In this case the $\infty$ term of the spectral sequence is the bi-graded complex $E^{*,*}_{\infty} =\oplus_{p,q} E^{p,q}_{\infty}$ where 
$$E^{p,q}_{\infty} = E^{p,q}_{r(p,q)},\;\;  \forall (p,q).$$
Given a cdga $(A,d)$. A filtration $FA$ on $(A,d)$ is a family of graded subalgebras $\{F^{p}A\}$ for $p\in \mathbb{Z}$ so that
$$\cdots \subseteq F^{p+1}A\subseteq F^{p}A \subseteq F^{p-1}A\subseteq \cdots \subseteq A$$ and $d(F^{p}A) \subseteq F^{p}A$ for all $p\in \mathbb{Z}$. In particular,  $d$ induces a differential on each $F^pA$ so that $(F^pA,d_{F^pA})$ becomes also a differential graded algebra.
The filtered cdga $(A, FA, d)$ determines the associated differential bi-graded algebra $(GA,Gd)$ where
$$G^{p,q}A=(F^{p}A/F^{p+1}A)^{p+q}$$
and $Gd$ is the differential naturally induced by $d$. Here $p$ is called the \emph{filtration degree}, $q$ is the \emph{complementary degree} and $p+q$ is the \emph{total degree}.

Now, denote (for each integer $r\geq 0$) by $Z_{r}^{p}$ and $B_{r}^{p}$ the sub-algebras
$$ Z_{r}^{p,q} = \{ x\in [F^pA]^{p+q} | dx\in [F^{p+r}(A)]^{p+q+1}\}$$
and
$$B_{r-1}^{p,q} = d([F^{p-r+1}A]^{p+q-1})\cap F^pA =  d(Z_{r-1}^{p-r+1,q+r-2}).$$
Clearly,  $B_{0}^{p}\subset B_{1}^{p}\subseteq \cdots B_{r}^{p}\subseteq \cdots \subseteq Z_{r}^{p}\subseteq \cdots \subseteq Z_{0}^{p}.$
The $r$-term ($r\geq 0$) of this spectral sequence is the
bi-graded algebra $E_{r}=\{E_{r}^{p,q}\}$ where

\begin{equation}\label{Ekterm}
E_{r}^{p,q}=(Z_{r}^{p}/(Z_{r-1}^{p+1}+B_{r-1}^{p}))^{p+q}=Z_{r}^{p,q}/Z_{r-1}^{p+1,q-1} +  B_{r-1}^{p,q}.
\end{equation}

It follows from the definition that $(E_{0},d_{0})=(GA,Gd)$ and  $d$ factors to define a differential $d_{r}$ in $E_{r}$ of bi-degree $(r,1-r)$. Moreover, the inclusion $Z_{r+1}^{p}\hookrightarrow Z_{r}^{p}$ induces an isomorphism of bi-graded algebras $E_{r+1}\stackrel{\cong}{\rightarrow} H(E_{r})$ and
the filtration $\{F^{p}A\}$ on  $A$ induces a filtration on $H(A,d)$ as follows:
$$F^{p}H(A,d)=Im(H(F^{P}A)\rightarrow H(A)).$$
If the associated spectral sequence is convergent and there is an isomorphism $E_{\infty}\cong GH(A)$ of bi-graded algebras then, we say that it   converges to $H(A)$. This is expressed (quite often) by:
\begin{equation}
   E_2^{p,q} \implies  H^{p+q}(A,d).
\end{equation}

\subsubsection{Milnor-Moore and Eilenberg-Moore spectral sequences}
In this subsection we consider again cohomologically $1$-connected cdga algebras $1$-connected spaces.

Given a minimal Sullivan algebra $(\Lambda V,d)$ with $d=d_{k}+d_{k+1}+\cdots$ (some $k\geq 2$) where $d_{i}(V)\subset \Lambda^{\geq i} V$ ($\forall i\geq k$). Notice that $d_k$ is indeed a differential. We endow $\Lambda V$ with the decreasing filtration:
\begin{equation}\label{fil-MM-spec}
F^{p}\Lambda V=\Lambda^{\geq p} V,\; p\geq 0.
\end{equation}
It is straightforward to verify that $(F^{p}\Lambda V)$ is a decreasing sequence and $dF^{p}\Lambda V \subset F^{p}\Lambda V$. Consequently, we obtain the so-called (convergent)
  \emph{Milnor-Moore spectral sequence}:
\begin{equation}\label{Milnor Moore spec}
    E_k^{p,q}(\Lambda V) = H^{p,q}(\Lambda V,d_{k})\implies  H^{p+q}(\Lambda V,d).
  \end{equation}
    
Next,(see \cite{Mu94}, \cite{R2}), on  the cdga $A=(Hom_{\Lambda V}((\Lambda V\otimes \Lambda sV,d),\Lambda V),D)$, where  \cite{FHTb} $(\Lambda V\otimes \Lambda sV,d)$  stands for the $(\Lambda V, d)$-semifree resolution of $\mathbb{K}$ with differential $d$ extending that of $\Lambda V$ by $dsv=v-sdv$, we
consider the filtration
\begin{equation}\label{filt-Ext-MM spec}
F^{p}A=\{f\in Hom_{\Lambda V}((\Lambda V\otimes\Lambda sV,d),\Lambda V)\; |\;  f(\Lambda sV)\subset\Lambda^{\geq p} V\},\; p\geq 0.
\end{equation}
Notice that since $f$ is a $(\Lambda V, d)$-morphism, then, $f\in F^pA$ if and only if $f(\Lambda V \otimes \Lambda sV)\subset\Lambda^{\geq p} V$, that is $f(1)\subset\Lambda^{\geq p} V$ and $f(\Lambda sV)\subset\Lambda^{\geq p} V$.
Here again, it is straightforward to see that $(F^{p}A)$ is a decreasing sequence and $dF^{p}A \subset F^{p}A$.
Then, we get the \emph{Eilenberg-Moore spectral sequence} \cite{Mu94} and \cite{R2}:
\begin{equation}\label{Ext Milnor Moore spec}
E_k^{p,q}(A) = Ext^{p,q}_{(\Lambda V,d_{k})}(\mathbb{K},(\Lambda V,d_{k})) \implies  Ext^{p+q}_{(\Lambda V,d)}(\mathbb{K},(\Lambda V,d)).
\end{equation}
 
Recall that a Sullivan algebra is called elliptic if $\dim V$ and $\dim H(\Lambda V,d)$ are both finite dimensional. It results, from the convergence of these spectral sequences that if $(\Lambda V, d_{k})$  is an elliptic (resp. a Gorenstein) Sullivan algebra, then $(\Lambda V, d)$ is.

\subsubsection{Odd and odd-Eilenberg-Moore spectral sequences}
 To give a convenient characterization of ellipticity, S. Halperin associated \cite{H77} to an arbitrary  Sullivan algebra $(\Lambda V,d)$ whose generating space $V$ is finite dimensional, another  Sullivan algebra denoted $(\Lambda V,d_{\sigma})$ with differential $d_{\sigma}$ satisfying:
  $$d_{\sigma}(V^{\text{even}})=0,\; \hbox{and}\; (d-d_{\sigma})(V^{\text{odd}})\subseteq \Lambda V^{\text{even}}\oplus \Lambda^+V^{\text{odd}}
 .$$
  $(\Lambda V,d_{\sigma})$ is called, the \emph{associated pure algebra} of $(\Lambda V,d)$. Notice that the second property is equivalent to $d_{\sigma}(V^{\text{odd}})\subseteq \Lambda V^{\text{even}}$.
 He then introduced the following spectral sequence 
 \begin{equation}\label{odd spec}
   E_k^{p,q}(\Lambda V)(pure) = H^{p,q}(\Lambda V,d_{\sigma})\implies  H^{p+q}(\Lambda V,d)
 \end{equation} which connects  $(\Lambda V,d)$ and $(\Lambda V,d_{\sigma})$ called
\emph{the odd spectral sequence}. Its main result states that $\dim H(\Lambda V,d) < \infty \Leftrightarrow \dim H(\Lambda V,d_{\sigma}) < \infty.$
Thus, $(\Lambda V,d)$ is elliptic if and only if $(\Lambda V,d_{\sigma})$ is.

Another spectral sequence, introduced by the fourth author in \cite{R1} is called \emph{the odd-Eilenberg-Moore or Ext-odd
 spectral sequence}:
\begin{equation}\label{Ext-odd spec}
E_k^{p,q}(A)(pure) = Ext^{p,q}_{(\Lambda V, d_{\sigma})}(\mathbb{K},(\Lambda V, d_{\sigma})) \implies  Ext^{p+q}_{(\Lambda V, d)}(\mathbb{K},(\Lambda V, d)).
\end{equation}
More details about it will be  discussed in \S 5.

\section{A basis of $Ext_{(\Lambda V, d_{\sigma})}(\mathbb{K},(\Lambda V, d_{\sigma}))$.}

In this section we assume $\mathbb{K}=\mathbb{Q}$. 
Recall that a Sullivan algebra $(\Lambda V,d)$ is said to be \emph{pure}  if $\dim V < \infty$, $dV^{\text{even}}=0$ and  $dV^{\text{odd}}\subseteq  \Lambda V^{\text{even}}$ \cite{FHTb, H77}. If moreover $\dim H(\Lambda V, d)<\infty$ we say that $(\Lambda V, d)$ is an \emph{elliptic pure Sullivan algebra}. Any space whose minimal model is a pure (elliptic) model is called a \emph{pure (elliptic) space}.

We assume that $(\Lambda V, d)$ is a minimal pure  Sullivan algebra, that is $\dim V < \infty$ and its differential $d$ satisfies $d= \sum_k d_{\geq k}$ (for some fixed $k\geq 2$),  $dV^{\text{even}}=0$ and  $dV^{\text{odd}}\subseteq  \Lambda V^{\text{even}}$. 

Let $(x_{1},\ldots,x_{n})$ be a basis of $V^{\text{even}}$  and $(y_{1}, \ldots ,y_{m})$  be a basis of  $V^{\text{odd}}$. Thus,
$$\Lambda V=\mathbb{Q}[x_{1}, \ldots ,x_{n}]\otimes \Lambda(y_{1}, \ldots ,y_{m}),$$
	with $dx_{i}=0$ for all $1\leq i\leq n$ and $dy_{j}\in\mathbb{Q}[x_{1}, \ldots ,x_{n}]$ for all $1\leq j\leq m$.
 Its formal dimension is given by the formula \cite[Proposition 5.2.]{FHT88} 
 \begin{equation}\label{fd}
 fd(\Lambda V,d) := N =\sum_{j=1}^{m}| y_{j}|+\sum_{i=1}^{n}(1-| x_{i}|).
 \end{equation}

A $(\Lambda V, d)$-semifree resolution of $(\mathbb{Q},0)$, called an acyclic closure of $(\Lambda V, d)$ (see \cite{FHTb}), is quasi-isomorphic to the cdga $(\Lambda V\otimes(\mathbb{K}\oplus sV), d)$ with differential (denoted also $d$) extending that of  $(\Lambda V,d)$ as follows :
	 \begin{equation}\label{d(sv)}
	  d(sv)=v - s(dv),\; \hbox{for any} \; v\in V.
	 \end{equation}	
Therefore, being  $\dim V < \infty$, $(\Lambda V, d)$ is a Gorenstein algebra so, 	there exists a unique generating class $\Omega = [f]$ of $Ext_{(\Lambda V, d)}(\mathbb{Q},(\Lambda V, d))$ represented by a  non-degenerate cocycle morphism
	$f\in Hom_{(\Lambda V,d)}((\Lambda V\otimes(\mathbb{K}\oplus sV), d) , (\Lambda V, d))$.
In particular, we have  $D(f) =  d\circ f +  (-1)^{|f|+1} f\circ d  = 0$ (cf. (\ref{differential})), that is
\begin{equation}\label{D}
d\circ f = (-1)^{|f|} f\circ d.
	\end{equation}	

The main result of this section is :

\begin{proposition}\label{Basis1}
Let $(\Lambda V,d)$ be a  pure (not necessarily elliptic) algebra. With the notations above,
    a basis class $[f]$  of $Ext_{(\Lambda V, d)}(\mathbb{Q},(\Lambda V, d))$ is completely determined by $f(\omega)$ and $f(sx_{i})$ for all  $x_i$ in $V^{\text{even}}$.
\end{proposition}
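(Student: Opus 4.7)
The plan is to exploit $\Lambda V$-linearity of $f$ together with the cocycle equation $d\circ f=(-1)^{|f|}f\circ d$ to propagate the prescribed data to all of $\Lambda V\otimes\Lambda sV$. Since $f$ is left $\Lambda V$-linear, it is determined by its restriction to a $\mathbb Q$-basis of $\Lambda sV$; in the pure case $\Lambda sV=\Lambda(sx_1,\dots,sx_n)\otimes\mathbb Q[sy_1,\dots,sy_m]$ because $|sx_i|$ is odd and $|sy_j|$ is even, so a natural basis consists of the monomials
\[
\mu_{I,K}=sx_{i_1}\cdots sx_{i_p}\,(sy_1)^{k_1}\cdots(sy_m)^{k_m},\qquad I\subseteq\{1,\dots,n\},\ K\in\mathbb N^m.
\]

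I would then record the differential on generators: $d(sx_i)=x_i$ and $d(sy_j)=y_j-s(dy_j)$, where the correction term lies in $\Lambda V\cdot sV^{\mathrm{odd}}$, so that the Leibniz rule applied to $\mu_{I,K}$ produces a $\Lambda V$-linear combination of monomials of strictly smaller $sV$-word length. The cocycle identity at $\mu_{I,K}$ becomes
\[
d\bigl(f(\mu_{I,K})\bigr)=(-1)^{|f|}\sum_{\mu'}a_{\mu'}(V)\,f(\mu'),
\]
the sum running over monomials $\mu'$ of strictly smaller word length in $\Lambda sV$. Running an induction on word length, the right-hand side is a known cochain in $\Lambda V$ once the $f(sx_i)$ are prescribed and the values at lower word lengths are in hand, and $f(\mu_{I,K})$ is then recovered as a $d$-preimage.

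The remaining freedom at each inductive step — the choice of $d$-preimage, unique only modulo $d$-cocycles in $\Lambda V$ — is absorbed at the level of the cohomology class by the Gorenstein hypothesis: $\dim Ext^N_{(\Lambda V,d)}(\mathbb Q,\Lambda V)=1$, so two cocycles agreeing on $\omega$ and on all $sx_i$ represent the same class. Conceptually, the values $f(sx_i)$ anchor $f$ at the bottom of the induction (word length one on the odd side), while $f(\omega)$ pins down the single scalar parametrizing $[f]\in Ext^N$, namely the coefficient of the fundamental class.

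The main obstacle is verifying two compatibility points in the recursion: (i) the right-hand side of the cocycle equation is $d$-closed at each step, so that a preimage exists — this follows from $d^2=0$ on $\Lambda V\otimes\Lambda sV$ combined with the cocycle identities enforced at lower word lengths; and (ii) the $f$ produced by this process is unique modulo coboundary once $f(\omega)$ and the $f(sx_i)$ are fixed, which is precisely the content of $\dim Ext^N=1$. Together these two structural facts imply that the class $[f]$ is completely determined by the finite data $\bigl(f(\omega),f(sx_1),\dots,f(sx_n)\bigr)$.
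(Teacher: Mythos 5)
Your overall strategy is the paper's: use $\Lambda V$-linearity to reduce to $\Lambda sV$, then propagate the data $f(1)$ and the $f(sx_i)$ through the cocycle equation $d\circ f=(-1)^{|f|}f\circ d$, with the one-dimensionality of $Ext^N$ absorbing the remaining choices. But there is a genuine gap at the central step. You assert that at each stage the right-hand side of the cocycle equation is $d$-closed ``so that a preimage exists,'' deducing this from $d^2=0$. Closedness does follow from $d^2=0$; exactness does not, and exactness is exactly what must be proved. This is where the paper does its real work, and it splits into two cases that your argument does not see. When $ev_{\Lambda V}\neq 0$ the algebra is elliptic, so $H^{>N}(\Lambda V,d)=0$ and the obstruction classes (e.g.\ $x_i\omega$ in degree $>N$) are coboundaries for degree reasons. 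When $ev_{\Lambda V}=0$ --- the ``not necessarily elliptic'' case explicitly covered by the statement --- $H^{>N}(\Lambda V,d)$ need not vanish (in Example~\ref{ExampleB} the classes $[x_2]^t$ survive in all degrees), so the degree argument fails; the paper instead uses that the top class is itself a coboundary, $\omega=d\omega'$, and makes the explicit choice $T_i=x_i\omega'$ to solve the first obstruction and launch the recursion. Without one of these two mechanisms your induction cannot be continued, so the claim that the process always produces the required preimages is unsupported.

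A secondary, repairable slip: your induction on $sV$-word length is mis-organized. Since $d(sy_j)=y_j-s(dy_j)$ and $s(dy_j)$ is a $\Lambda V$-combination of the $sx_i$'s, the Leibniz rule applied to a monomial $\mu_{I,K}$ produces, besides terms of smaller word length, terms of the \emph{same} total word length (an $sy_j$ replaced by an $sx_r$). The recursion closes only if you order the induction lexicographically, first on the number of $sy$-factors and then on the number of $sx$-factors --- which is precisely why the paper determines the $f(sx_i)$ before the $f(sy_j)$. Finally, note that invoking $\dim Ext^N=1$ to conclude that ``two cocycles agreeing on the data represent the same class'' is true but vacuous (any two non-cobounding degree-$N$ cocycles are proportional); the actual content of the proposition is the constructive recovery of a representative from the data, which is where the exactness issue above must be resolved.
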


\begin{proof}
	To prove this proposition, we consider two cases based on the evaluation map.
	\begin{itemize}
		\item 
We begin by considering the case where $ev_{\Lambda V}\neq0$ so that  $(\Lambda V,d)$ is elliptic \cite[Theorem A]{Mu2}, hence, a Poincaré duality algebra with a non-zero fundamental class $\omega$. Since $\Lambda V\otimes(\mathbb{K}\oplus sV)$ is positively graded, we may put $p=1_{\mathbb{Q}}:=1$ so that $\omega = ev_{(\Lambda V,d)}:= [f(1)]$.

Recall that an explicit expression of a cocycle representing $\omega$ is given for instance in \cite{Mu}.
Then, since
 $f$  is a  $(\Lambda V,d)$-module, it remains to determine it on $\Lambda sV$. For this, 
 since $(\Lambda V, d)$ is pure, formulas (\ref{d(sv)}) and (\ref{D}) above applied to $sV^{\text{even}}$ and $sV^{\text{odd}}$ reduce, respectively, to : 
\begin{equation}\label{sv}
  d(f(sx_{i}))= (-1)^{N}f(x_{i}) = (-1)^{N}x_i\omega \text{ and } 
  d(f(sy_{j})) = (-1)^{N}y_{j}\omega -(-1)^{N}f(sdy_{j}).
\end{equation}

As $|x_i|\geq 2$, we have $|f(x_{i})|>N$, so that   $d(f(x_{i}))= f(d(x_i))=0$ which implies that  $f(x_{i})=x_i\omega$ is a coboundary. Consequently, the first obstruction is to find $T_{i}\in \Lambda V^{\text{even}}\otimes \Lambda^+ V^{\text{odd}}$ for $i=1, \ldots ,n$ satisfying $dT_{i}=(-1)^{N}x_{i}\omega$. We then  put $f(sx_{i})=T_{i}$. 

For every cocycle $y_{j}$, we determine $S_j$ such that $dS_{j}=(-1)^{N}y_{j}\omega$. We then put $f(sy_{j})=S_{j}$.

Now, for every $y_{j}$ which is not a cocycle i.e.  $dy_j\neq 0$, let $m_j$ denotes the number of $x_i's$ having  degrees less than that of $y_j$. Noticing that
$dy_{j}\in\mathbb{Q}[x_{1},\ldots,x_{n}]$ is  decomposable, we should assume in the following sum:

$$dy_{j}=\sum_{i_{1}\leq \cdots \leq i_{l_j}}\alpha_{i_{1} \cdots i_{l_j}}x_{i_{1}} \cdots x_{i_{l_j}};\; \alpha_{i_{1} \cdots i_{l_j}}\in \mathbb{Q}$$
that $2\leq l_j\leq m_j$. Here, 
by convention, $\alpha_{i_{1} \cdots i_{l_j}}=0$ if $|x_{i_{1}} \cdots x_{i_{l_j}}|\neq |y_j|+1$. Hence,
 
$$sdy_{j}=\sum_{i_{1}\leq \cdots \leq i_{l_j}}\alpha_{i_{1} \cdots i_{l_j}}x_{i_{1}} \cdots x_{i_{l_j-1}}sx_{i_{l_j}}$$  
It follows that,
\begin{eqnarray*}
d(f(sy_{j})) &=& (-1)^{N}f(y_{j})-(-1)^{N}f(sdy_{j})  \\
   &=& (-1)^{N}y_{j}\omega -(-1)^{N}\sum_{i_{1}\leq \cdots \leq i_{l_j}}\alpha_{i_{1} \cdots i_{l_j}}x_{i_{1}}\cdots x_{i_{l_j-1}}f(sx_{i_{l_j}}) \\
   &=& (-1)^{N}y_{j}\omega-(-1)^{N}\sum_{i_{1}\leq \cdots \leq i_{l_j}}\alpha_{i_{1} \cdots i_{l_j}}x_{i_{1}}\cdots x_{i_{l_j-1}}T_{i_{l_j}}.
\end{eqnarray*}

Therefore, knowing  $T_{i_{l_j}}$, the second obstruction is to find $S_{j}\in \Lambda V^{\text{even}}\otimes \Lambda^+ V^{\text{odd}}$ satisfying the differential condition
$$
dS_{j}=(-1)^{N}y_{j}\omega-(-1)^{N}\sum_{i_{1}\leq \cdots \leq i_{l_j}}\alpha_{i_{1} \cdots i_{l_j}}x_{i_{1}}\cdots x_{i_{l_j-1}}T_{i_{l_j}}.
$$
Notice that  $x_{i_{l_j}}$  can reappear from  $T_{i_{l_j}}$.
We then put $f(sy_{j})=S_{j}$.

\item  In the second case, we assume the evaluation map vanishes, i.e. $ev_{\Lambda V}=0$. In this case, referring again to \cite[Corollary 3]{L-M1}, the  cocycle $\omega$, considered in the above case  is called the top class of $(\Lambda V,d)$ and it is actually a coboundary. Let us put $\omega = d\omega'$ with $|\omega'|=N-1$. It is then straightforward to see that the above discussion applies with minor changes:

Indeed, since for every  $1 \leq i \leq n$,  $dT_i = x_i\omega =dx_i\omega'$, we may put $T_i = x_i\omega'$. Therefore, in this case, $x_{i_{l_j}}$  reappears effectively  from  $T_{i_{l_j}}$ and we have:
 $$
 d[f(sy_{j}) - (-1)^{N}y_j\omega'] = [-(-1)^{N}\sum_{i_{1}\leq \cdots \leq i_{l_j}}\alpha_{i_{1}\cdots i_{l_j}} {x}_{i_{l_1}} \cdots x_{i_{l_j}}]\omega'.
 $$
 Thus, we must check for an $S'_j\in \Lambda V^{\text{even}}\otimes \Lambda^+ V^{\text{odd}}$ such that
 $$d(S'_j) = [-(-1)^{N}\sum_{i_{1}\leq \cdots \leq i_{l_j}}\alpha_{i_{1} \cdots i_{l_j}}x_{i_{1}} \cdots x_{i_{l_j}})]\omega'.$$ 
 We then put
 $f(sy_{j}) - (-1)^{N}y_j\omega' = S'_j$ which gives
 $$f(sy_{j}) = (-1)^{N}y_j\omega' + S'_j.$$
 	\end{itemize}
\end{proof}

We now apply this procedure for particular examples.
\begin{example}\label{ExampleA}
Consider the Sullivan model $(\Lambda x_{2},x_{4},y_{5},y_{7}, d)$, $dx_{2}=dx_{4}=0$, $dy_{5}=x_{2}^{3}-2x_{2}x_{4}$, $dy_{7}=x_{4}^{2}-x_{2}^{2}x_{4}$. This is clearly a pure elliptic Sullivan algebra. It is elliptic with formal dimension $N=8$ and the fundamental  class $\omega=[x_{2}^{2}x_{4}-2x_4^2]$.
So, let $f(1)=x_{2}^{2}x_{4}-2x_4^2$. Applying the above algorithm, we get $f(sx_{2})=x_{4}y_{5}$, $f(sx_{4})=x_{2}x_{4}y_{5}+(x_{2}^{2}-2x_4)y_{7}$, $f(sy_{5})=2y_5y_7$ and 
$f(sy_{7})=x_{2}y_{5}y_{7}$. This determines the desired representative $f$ of the generating class $\Omega$ of 
$Ext_{(\Lambda V, d_{\sigma})}(\mathbb{K},(\Lambda V, d_{\sigma}))$.
\end{example}

\begin{example}\label{ExampleB}
Let $(\Lambda x_{2},x_{4},y_{5},y_{7}, d)$, $dx_{2}=dx_{4}=0$, $dy_{5}=-2x_{2}x_{4}$ and $dy_{7}=x_{4}^{2}$. The differential $d=d_2$ is homogeneous of degree two and such model is called a \emph{coformal Sullivan model}. It is the \emph{associated quadratic model}  of $(\Lambda x_{2},x_{4},y_{5},y_{7}, d)$. It is pure but non-elliptic model, since $[x_2]^t\neq0$ for any $t\geq 1$. The class $\omega_0 = [2x_{4}^{2}]$ (whose degree  equals $N=8$), although it is zero, it remains of interest for the determination of a representing cocycle of, say  $\Omega_0$ of $Ext_{(\Lambda V, d)}(\mathbb{Q},(\Lambda V, d))$. $\omega_0$ is called the  {\it top class} of 
$(\Lambda V,d)$ \cite{L-M1}. We put (formally) $f(1)= 2x_{4}^{2}$ (as in the second case $(ii)$ of the proof above). This yields, respectively, :

\begin{itemize}
	\item $d(f(sx_{2}))=f(x_{2})=x_2f(1)=2x_2x_4^2=d_2(ax_4y_5+bx_2y_7)$, with $a$ and $b$ such that $-2a+b=2$, that is $b=2a+2$. Therefore, 
	$f(sx_2)=ax_4y_5+2(a+1)x_2y_7)$, some $a\in\mathbb{Q}$.
	
	\item $d(f(sx_{4}))=f(x_{4})=2x_4^2=d(2x_{4}y_{7})$. Hence,  $f(sx_{4})=2x_{4}y_{7}$.
	
	\item $d(f(sy_{5}))=f(y_5)-f(sd(y_5))=2x_4^2y_5+2x_2f(sx_4)=2x_4^2y_5+ 4x_2x_4y_7=d(-2y_5y_7)$. Hence, $f(sy_{5})=-2y_{5}y_{7}$.
	
	\item $df(sy_{7})=f(y_7)-f(sd(y_7))=2x_4^2y_7-x_4f(sx_4)=2x_4^2y_7- 2x_4^2y_7=0.$
	Hence, $f(sy_{7})=0$. 
\end{itemize}

This determines $\Omega_0=[f]$.
\end{example}

\section{A basis of $Ext_{(\Lambda V,d)}(\mathbb{K},(\Lambda V,d))$ from that of $Ext_{(\Lambda V,d_{k})}(\mathbb{K},(\Lambda V,d_{k}))$}
We consider $(\Lambda V,d_{\Lambda V})$ to be a minimal Sullivan algebra with $d_{\Lambda V}=\sum_{i\geq k}d_{i}$, for some fixed $k\geq 2$,  with $d_{i}(V)\subset \Lambda^{i}V$. We assume that $\dim V < \infty$ and that $(\Lambda V, d)$ is elliptic, but $(\Lambda V, d_k)$ is not necessarily elliptic. As a consequence,
$(\Lambda V,d_{k})$ and $(\Lambda V,  d_{\Lambda V})$ are both Gorenstein differential graded algebras with the same formal dimension given in (\ref{fd}). Recall from \S 2 that they are
 related by  the above spectral sequence \eqref{Ext Milnor Moore spec}
induced by the  filtration (\ref{filt-Ext-MM spec}) which we recall here for convenience:
$$F^{p}A=\{g\in A | \; g(\Lambda sV) \subset \Lambda^{\geq p} V)\}$$
with  $A:= Hom_{\Lambda V}(\Lambda V\otimes \Lambda sV,\Lambda V)$ is equipped with the differential $D_A$ also decomposed as follows
$D =\sum_{i\geq k}D_i$  and satisfying 
$$D_A(f) = d_{\Lambda V}\circ f + (-1)^{|f|+1} f\circ D_{\Lambda V \otimes \Lambda sV}.$$

Henceforth, we denote $d_{\Lambda V}$ by $d$ and both $D_A$ and $D_{\Lambda V \otimes \Lambda sV}$ by $D$. Recall that, due to degree constrains, $d_k$  is a differential on $\Lambda V$ defining $(D_{\Lambda V \otimes \Lambda sV})_k$ 
%(which we denote also thereafter by $D_k$) 
on $\Lambda V \otimes \Lambda sV$ in the same way that $d_{\Lambda V}$ defined $ D_{\Lambda V \otimes \Lambda sV}$. The induced  $D_k$ on $A$, defined by these differentials, is a differential for degree reasons. 
The first term of \eqref{Ext Milnor Moore spec} is denoted 
$$E_k^{p,q}=H^{p,q}(Hom_{\Lambda V}(\Lambda V\otimes \Lambda sV),\Lambda V), D_{k})$$
 where $p$ is the filtration degree and $q$ its complement in such that $p+q$ stands for the total degree.
 
%Since $(\Lambda V,d_{k})$ and $(\Lambda V,  d)$ are both Gorenstein, we may identify the term $E_{\infty}^{p,q}$ of the spectral sequence  \eqref{Ext Milnor Moore spec} with $Ext^{p+q}_{(\Lambda V,d)}(\mathbb{K},(\Lambda V,d))$. 
%\subsection{An algorithm to determin a basis class of $Ext_{(\Lambda V,d)}(\mathbb{K},(\Lambda V,d))$}

The Gorenstein structure of $(\Lambda V,d_{k})$ implies the existence of a unique  $(p,q)$ such that $Ext_{(\Lambda V,d_{k})}(\mathbb{K},(\Lambda V,d_{k}))=Ext^{p,q}_{(\Lambda V,d_{k})}(\mathbb{K},(\Lambda V,d_{k}))$ has a single generating class $[f]$ of degree $N=p+q$.
 Using notation of \S 2, we have $f\in Z^{p,q}_{k-1}$ is the unique cocycle, up to a coboundary. Thus,
 \begin{equation}\label{generator f}
f(\Lambda sV)\subset (\Lambda^{\geq p} V)^N\;  \hbox{and}\; Df(\Lambda sV)\subset (\Lambda^{\geq p+k-1} V)^{N+1}
 \end{equation}

\begin{enumerate}
	\item Suppose that $Df=0$.

If moreover $f=Dg$, for some $g\in A^{N-1}$, then $f=D_{k}g+\sum_{i\geq k+1}D_{i}g$. So, $f-D_{k}g$ is a $D_k$ cocycle such that $(f-D_{k}g)(\Lambda sV)\subset \Lambda^{\geq p+1}V$. Necessarily,  $[f-D_{k}g]=0$ in $A$. Otherwise it should be  a  generating class of $Ext_{(\Lambda V,d_{k})}(\mathbb{K},(\Lambda V,d_{k}))$. But, being of  bi-degree $(p+1, q-1)$, we get a contradiction to the uniqueness of $(p,q)$. As a consequence, $f$ is not a coboundary and its class is the desired generator (in this case) of $A$ or equivalently, the spectral sequence \eqref{Ext Milnor Moore spec} degenerates at the first term $E_{k}$.

\item Assume that $Df\neq0$.

Since $Df$ is a $(\Lambda V,d)$-morphism,  it is defined by its restriction to $\Lambda sV$. 
Let, for any $i \geq 0$, $h^{0}_{i}:\Lambda V\otimes \Lambda sV\rightarrow \Lambda V$ ($i \geq 1$), the $(\Lambda V,d)$-module given, on $\Lambda V$, by  $h^{0}_{i}(1)$ (coming from $Df(1)= d(f(1))$),  its restriction  $h_i^0: \Lambda sV\rightarrow \Lambda^{p+k-1+i}V$ and, the  extensions  defined by $h^{0}_{i}(\alpha \otimes sv)=\alpha h_{i}^{0}(sv)$.
For degree reason, there is a fix integer $t\geq 0$ such that
$$Df_{|sV}=h^{0}_{0}+\cdots+h^{0}_{t}.$$
(with the convention that the last term $h^0_t$ is non zero).
Indeed, let  $r= max\{|v_i|,\; v_i\in V\}$ (referring to \cite[Corollary 1, p. 441]{FHTb} $r\leq 2N-1$). As $h_t^0(sv)\in \Lambda^{p+k-1+t}V$, then for all $sv\in sV$,  $|h_t^0(sv)|=|h_t^0|+|sv|=N+1+|sv|\geq 2(p+k-1+t)$ which implies that $N+r\geq 2(p+k-1+t)$. Thus, $t\leq{(N+2-2p-2k+r)}/{2}$.

 This is equivalent to :
\begin{equation}\label{Df(k)}
 Df(\Lambda sV)\subseteq \Lambda^{p+k-1}V\oplus\cdots\oplus\Lambda^{p+k-1+t}V.
 \end{equation}
Until the last of section, we consider only the restrictions to $\Lambda sV$.

Now, since $D^{2}f=Dh^{0}_{0}+\cdots+Dh^{0}_{t}=0$ and, by word-length argument, $D_{k}h^{0}_{0}=0$ (in fact, $D_{k}h^{0}_{0}(\Lambda sV)\subset \Lambda ^{2(k-1)+p}V$ is the unique term  which has this least  word-length). But, $|h^{0}_{0}|=N+1>N$ implies that there exist some $\xi_{1}\in Hom_{\Lambda V}(\Lambda V\otimes \Lambda sV;\Lambda V)$ such that $h^{0}_{0}=D_{k}\xi_{1}$.  Moreover, as $h^{0}_{0}(\Lambda sV)=D_{k}\xi_{1}(\Lambda sV)\subset \Lambda ^{p+k-1}V$ we see that $\xi_{1}(\Lambda sV)\subset \Lambda ^{p}V$ with total degree $N={p+q}$.

We now introduce $f^{1}=f-\xi_{1}$. 

On one hand, Since, $Df^{1} = Df - D\xi_1 = h^{0}_{0}+\cdots+h^{0}_{t} - D_k\xi_1 - (D-D_{k})\xi_1 = (h^{0}_{1}- D_{k+1}\xi_1) + h^{0}_{2} + \cdots + h^{0}_{t} - D_{\geq k+2}\xi_1$ and
$D^2f^1 = 0$, we have, for degree reason, $D_k(h^{0}_{1}- D_{k+1}\xi_1) =0$. So, being $|h^{0}_{1}- D_{k+1}\xi_1| = N+1$, there is some $\xi_2\in A^{N}$ such that $h^{0}_{1}- D_{k+1}\xi_1 = D_k\xi_2$.
This can be resumed (for the same $t$ due again for analogous degree reason) as:
$$Df^{1} = h_1^1 + h^{1}_{2}+\cdots+ h^{1}_{t}$$
where $h_1^1 = h^{0}_{1}- D_{k+1}\xi_1  = D_k\xi_2$.

We continue by introducing  $f^2 =f^1 - \xi_2 = f - \xi_1 - \xi_2$ and repeat the process until we reach 

$$
\left\{
\begin{array}{l}
f^t = f^{t-1} - \xi_t = f -\xi_1 - \cdots - \xi_t\\ Df^t=h_t^t = h_t^0 - D_{k+t}\xi_1 - D_{k+t-1}\xi_2 - \cdots - D_{k+1}\xi_t.
\end{array}
\right.
$$

If $Df^t \neq 0$, we continue the above process to get $Df^t =h_t^t = D_k\xi_{t+1}$ (some $\xi_{t+1}\in A^N$). But, by definition of $t$ we have   $D_{\geq k+1}(\xi_{t+1})=0$. Therefore, $Df^t = D\xi_{t+1}$ so that $D(f^t -\xi_{t+1})=0$. We (should)  then take $f^t -\xi_{t+1}$ instead of $f^t$.

Then, to simplify,  we assume that $Df^t= 0$ and show that $f^t$ is not a coboundary.

Assume that $f^t =Dg$ (some $g\in A^{N-1})$.
On one hand, since all terms of the spectral sequence \eqref{Ext Milnor Moore spec} are one-dimensional (as graded vector spaces) we still have $$f^t(\Lambda sV)\subseteq \Lambda^{p}V\oplus \cdots \oplus \Lambda^{p+t}V.$$ Thus, we should have $g(\Lambda sV)\subseteq \Lambda^{p-k+1}V \oplus \cdots \oplus \Lambda^{p+t-k+1}V$ so that $$g =g^0_0 + g_1^0 + \cdots + g_t^0$$ where $g_i^0(\Lambda (sV)\subseteq \Lambda^{p-k+1+i}V$.
 On the other hand, by (\ref{Df(k)}) we should  have  
$$f(\Lambda sV)\subseteq \Lambda^{p}V\oplus\cdots\oplus\Lambda^{p+t}V.$$
Thus, we may decompose $f$ as $$f =f^0_0 + f_1^0 + \cdots + f_t^0$$ where $f_i^0(\Lambda (sV))\subseteq \Lambda^{p+i}V$.
It follows, using word length argument, that  $f_0^0 - \xi_1 = D_kg^0_0$, Hence, 
$D_kf_0^0 - D_k\xi_1 = D_k^2(g_0^0)=0$ and since $D_kf_0^0=0$ (by word length), we get $h_0^0 =D_k\xi_1= 0$. This contradicts the fact that $f\in F^p(\Lambda V)$. 
As a conclusion,   $[f^{t}]$ is a generating class of $Ext^*_{(\Lambda V,d)}(\mathbb{K},(\Lambda V,d))$.
\end{enumerate}
Thus we have established the following
  \begin{theorem}\label{main 2}
 With the above notation, either the spectral sequence \eqref{Ext Milnor Moore spec} degenerate at $E_k$ and   $[f]$ generates $Ext^*_{(\Lambda V,d)}(\mathbb{K},(\Lambda V,d))$ or else, $[f^t]$  is the generating class of $Ext^*_{(\Lambda V,d)}(\mathbb{K},(\Lambda V,d))$. 
  \end{theorem}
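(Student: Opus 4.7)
The plan is to read Theorem \ref{main 2} through the lens of the Eilenberg--Moore spectral sequence \eqref{Ext Milnor Moore spec}. Since both $(\Lambda V, d_k)$ and $(\Lambda V, d)$ are Gorenstein with the same formal dimension $N$, convergence forces a unique non-zero bi-degree $(p,q)$ with $p+q = N$ in every page, so the $E_\infty$ representative of the target generator must lie in $F^p A$ and reduce modulo $F^{p+1}$ to the given cocycle $f$. The task is therefore to correct $f$ into a genuine $D$-cocycle, where $D = \sum_{i \geq k} D_i$ on $A = Hom_{\Lambda V}(\Lambda V \otimes \Lambda sV, \Lambda V)$.

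Case 1, when $Df = 0$, is the easy one: I would rule out $f = Dg$ by rewriting the equation as $f - D_k g = \sum_{i \geq k+1} D_i g \in F^{p+1} A$. The left-hand side is a $D_k$-cocycle; were it non-zero at $E_k$, its class would contribute to $Ext_{(\Lambda V, d_k)}(\mathbb{K}, (\Lambda V, d_k))$ at a bi-degree with filtration $\geq p+1$, contradicting the uniqueness of the bi-degree $(p,q)$ dictated by the Gorenstein property. Hence $f - D_k g$ is $D_k$-exact and so is $f$, contradicting the non-triviality of $[f]$; the spectral sequence collapses at $E_k$ and $[f]$ itself generates the target.

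Case 2, when $Df \neq 0$, is the heart of the argument. I would decompose $Df|_{\Lambda sV} = h_0^0 + \cdots + h_t^0$ with $h_i^0(\Lambda sV) \subseteq \Lambda^{p+k-1+i} V$; the truncation index $t$ is finite because $|h_i^0| = N+1$ and the generators of an elliptic $(\Lambda V, d)$ satisfy $|v| \leq 2N - 1$ by \cite[Cor.~1]{FHTb}. Reading $D^2 f = 0$ at the minimal word-length level yields $D_k h_0^0 = 0$, and since $(\Lambda V, d_k)$ is Gorenstein of formal dimension $N$ while $|h_0^0| = N+1 > N$, there exists $\xi_1$ with $D_k \xi_1 = h_0^0$ and $\xi_1(\Lambda sV) \subseteq \Lambda^p V$. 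Setting $f^1 := f - \xi_1$ kills the lowest-filtration term of $Df$ and raises the starting filtration of $Df^1$ by one. Iterating this procedure yields $\xi_2, \ldots, \xi_t$ and finally $f^t = f - \xi_1 - \cdots - \xi_t$ with $Df^t = 0$ (absorbing, if necessary, a terminal correction $\xi_{t+1}$ on which $D_{\geq k+1}$ vanishes for degree reasons).

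The main obstacle will be proving $[f^t] \neq 0$ in $Ext^*_{(\Lambda V, d)}(\mathbb{K}, (\Lambda V, d))$, and I would handle it by the same filtration principle as in Case 1. Assuming $f^t = Dg$, the decompositions $f = f_0^0 + \cdots + f_t^0$ and $g = g_0^0 + \cdots + g_t^0$ by word length (with $f_i^0(\Lambda sV) \subseteq \Lambda^{p+i}V$ and $g_i^0(\Lambda sV) \subseteq \Lambda^{p-k+1+i}V$) must be compatible piece by piece. The lowest-filtration matching forces $f_0^0 - \xi_1 = D_k g_0^0$; applying $D_k$ and using $D_k f_0^0 = 0$ (the $F^{p+k-1}$-component of $D_k f = 0$) gives $h_0^0 = D_k \xi_1 = 0$, contradicting the assumption that $Df$ has a non-zero lowest-filtration term. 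This delivers the dichotomy of Theorem \ref{main 2}.
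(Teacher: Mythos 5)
Your proposal is correct and follows essentially the same route as the paper's own argument: the same uniqueness-of-bidegree argument from the Gorenstein property in the case $Df=0$, the same word-length decomposition $Df|_{\Lambda sV}=h_0^0+\cdots+h_t^0$ with the iterative corrections $\xi_1,\ldots,\xi_t$ (and the terminal $\xi_{t+1}$), and the same lowest-filtration matching $f_0^0-\xi_1=D_k g_0^0$ to rule out $f^t=Dg$. Your phrasing of the final contradiction (that $h_0^0=D_k\xi_1=0$ contradicts the non-vanishing of the lowest-filtration term of $Df$) is in fact a cleaner statement of what the paper intends.
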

  
 \begin{remark}\label{rem4.2}
From   the  algorithm inducing $f^t$, the following formulas arise:
$$\left\{ \begin{array}{l}
h_0^0 = D_k\xi_1\\
h_1^1 = h_1^0 - D_{k+1}\xi_1= D_k\xi_2\\
h_2^2 = h_2^0 - D_{k+2}\xi_1 - D_{k+1}\xi_2= D_k\xi_3\\
\vdots\\
h_t^t = h_t^0 - D_{k+t}\xi_1 - D_{k+t-1}\xi_2 - \cdots - D_{k+1}\xi_t= D_k\xi_{t+1}.
\end{array}
\right. 
$$
We now describe how this is used explicitly to determine $f^t$ (cf. Example 4.2 below).

From the equation $Df(1) = d(f(1)) = h_0^0(1) + h_1^0(1) + \cdots h_t^0(1)$ with $f(1)$ represents the generating class $\omega_{0}$ of $H(\Lambda V,d_k)$; we deduce the components $h^0_i(1)$ ($0\leq i\leq t$). 
 These are essential in obtaining iteratively the $f^i$'s ($1\leq i\leq t$) as follows: we first determine $\xi_{1}(1)$ from the first equation, evaluated at $1\in \mathbb{Q}$. This determines $\xi_{1}$ on $\Lambda V$. Then, we determine $\xi_{1}$ on  $\Lambda sV$ and therefore,   $f^1 = f - \xi_{1}$ on $\Lambda V \otimes \Lambda sV$. We proceed inductively using the $(i+1)-th$  equation to determine $\xi_{i}$, hence $f^i = f^{i-1}-\xi_i$  on all  $\Lambda V \otimes \Lambda sV$. As it is shown previously, the process terminates while we reach $f^t$.

Alternatively, we can apply Lechuga-Murillo's algorithm \cite[Theorem 5]{L-M02} (or its extension given in \cite{B-R})  to determine the $\xi_{i}(1)$'s but with one step ahead since the first term in the equation is actually $h_0^0(1)$ instead of $h_1^0(1)$.
Consequently, from $f^t = f - \xi_1 - \xi_{2} - \cdots - \xi_t$ we obtain $f^t(1) = f(1) - \xi_1(1) - \xi_{2}(1) - \cdots - \xi_t(1)$ which indeed yields the fundamental  class $[f^t(1)] = ev_{(\Lambda V,d)}([f^t])$ of $(\Lambda V,d)$, that is, the generating class of $H^N(\Lambda V,d)$.
\end{remark}

\subsection{Ext-Ginsburg and $\mathrm{R}_0(\Lambda V,d)$ invariants}
\subsubsection{Ext-Ginsburg invariant}
Recall that M. Ginsburg defined \cite{Gin}, in terms of the following topological Milnor-Moore spectral sequence:
\begin{equation}\label{Top-M-M spec-seq}
E_2^{p,q} = Ext_{H_*(\Omega X, \mathbb{Q})}^{p,q}(\mathbb{Q} , \mathbb{Q}))\implies Ext_{C_*(\Omega X, \mathbb{Q})}^{p+q}(\mathbb{Q} , \mathbb{Q})
 \end{equation} 
a homotopy invariant, denoted $l_0(X)$, to be the largest integer $j$ such that the differential $d_j$ of the $j-th$ term $E_j$ is non-zero. He then showed  that $l_0(X) \leq cat_0(X)$.
 For our purpose, we use the algebraic isomorphic spectral sequence \eqref{Milnor Moore spec} to give the following  equivalent definition of $l_0(X)$:
$$l_0(\Lambda V,d) = sup\{ j\geq 0\; | \; d_j\neq 0 \}$$
or equivalently:
$$l_0(\Lambda V,d) = sup\{m\; | \; E_{m+1}^{*,*}(\Lambda V) = E_{\infty}^{*,*}(\Lambda V) \}.$$
In other words, $l_0(\Lambda V,d)+1$ designates, the {\it stage  where the spectral sequence \eqref{Milnor Moore spec} degenerate}.

Notice that the two definitions agree since,  the  first terms form $E_2$ up to $E_{k-2}$  in \eqref{Milnor Moore spec} are all identical to the term $E_0 = (\Lambda V,0)$ and $E_{k-1}\cong (\Lambda V,d_k)$ so that, if  $E_{\infty}^{p,q}=E_{s+1}^{p,q}$ in (\ref{Top-M-M spec-seq}) and $E_{\infty}^{p,q}(\Lambda V)=E_{t+1}^{p,q}(\Lambda V)$  in \eqref{Milnor Moore spec} (where the first term is $E_{k}^{p,q}(\Lambda V)$), then $t+1$ should be equal to $(s+1-k +1) +(k-1) =s+1$. 
%  (with the precision,  that, in the former,  first terms $E_2$ up to $E_{k-1}$ are identical to $A =(s+1) Hom_{(\Lambda V,d)}(\Lambda V \otimes \Lambda sV, \Lambda V)$).

In the same spirit,  we introduced \cite{A-R} in terms of \eqref{Ext Milnor Moore spec} an \emph{Ext-version} of  $l_0(\Lambda V,d)$ as follows:
$$\mathrm{L}_0(\Lambda V,d) = sup\{j\; | \; \delta_j\neq 0\}.$$
(see (\ref{delta}) below for the definition of $\delta_j$).
Clearly an equivalent definition is :
$$\mathrm{L}_0(\Lambda V,d) = sup\{m\; | \; E_{m+1}^{*,*}(A) = E_{\infty}^{*,*}(A)\}.$$
 Now, referring to \cite[Theorem 1.1]{R1} we know that, if $X$ is a simply connected finite type CW-complex, the spectral sequence   \eqref{Ext Milnor Moore spec} is isomorphic to the Eilenberg-Moore spectral sequence:
 \begin{equation}\label{E-M spec-seq}
E_2^{p,q} = Ext_{H_*(\Omega X, \mathbb{Q})}^{p,q}(\mathbb{K} , H_*(\Omega X, \mathbb{Q}))\implies Ext_{C_*(\Omega X, \mathbb{Q})}^{p+q}(\mathbb{K} , C_*(\Omega X, \mathbb{Q})).
 \end{equation}   Let $d_* =(d_j)_j$ (resp. $\delta_*  =(\delta_j)_j$) denote  the  differentials in  \eqref{E-M spec-seq} (resp. \eqref{Ext Milnor Moore spec}). 
 We then introduce the following invariant:
 $$\mathrm{L}_0(X) = max\{j\; | \; d_j\neq 0\}  = max\{m\; | \; E_{m+1}^{*,*} = E_{\infty}^{*,*}\}$$
 %and its algebraic equivalent invariant:
 %$$\mathrm{L}_0(X) = max\{j\; | \; \delta_j\neq 0\} = max\{m\; | \; E_{m+1}^{*,*}(A) = E_{\infty}^{*,*}(A)\}.$$
 Here once again,
$\mathrm{L}_0(\Lambda V,d) $ (resp. $\mathrm{L}_0(X)$) is the order minus one at which the spectral sequence \eqref{Ext Milnor Moore spec} (resp.\eqref{E-M spec-seq}) degenerate.
We call each of these,  \emph{the Ext- Ginsburg invariant}. 
%Likely, $\mathrm{L}_0(\Lambda V,d)+1$ designates, the {\it stage  where the spectral sequence \eqref{Ext Milnor Moore spec} degenerate}.

Next, we present an equivalent algorithm, based on spectral sequence argument, to obtain the  generating class $[f^t]$ of $Ext^{p+q}_{(\Lambda V,d)}(\mathbb{K},(\Lambda V,d))$
% (where $(p,q)$ is the unique pair of integers due to Gorenstein property)
 from that of $Ext^{p,q}_{(\Lambda V,d_{k})}(\mathbb{K};(\Lambda V,d_{k}))$ denoted here again by $[f]$.
 
Recall from  \S 2 that $f\in Z^{p,q}_{k-1}$. 
That is, $f(\Lambda sV)\subset \Lambda^{\geq p} V$ and $Df(\Lambda sV)\subset (\Lambda^{\geq p+k-1}V)^{N+1}$.
Clearly, if $D(f)=0$ then, as in the above algorithm, $[f]$ generates $Ext^{p+q}_{(\Lambda V,d)}(\mathbb{K},(\Lambda V,d))$, that is \eqref{Ext Milnor Moore spec} collapses at the first term.  We then assume that $Df\neq0$.
 
The decomposition $Df = h_0^0 + h_1^0 + \cdots + h_t^0$ and  the condition $D^2f=0$ lead us to introduce $f^1 = f - \xi_{1}$ where $\xi_1$ is the solution of $h_0^0 =D_k(\xi_1)$. This implies that $\xi_{1}(sV)\subseteq \Lambda ^{p}V$, hence,
\begin{enumerate}
\item $f^{1}(sV) = (f-\xi_{1})(sV)\subseteq  \Lambda ^{\geq p}V$.
\item $Df^{1}(sv) = D(f-\xi_1)(sV)  = [h^{0}_{1}+ \cdots + h^{0}_{t}-(D-D_{k})\xi_{_{1}}](sV)\subseteq \Lambda^{\geq p+k}V$.
\end{enumerate}
Therefore,  $f^{1}\in Z^{p,q}_{k}.$

To continue, recall from \S 2 that the differential $D$ of $A=Hom_{(\Lambda V,d)}(\mathbb{K},(\Lambda V,d))$ induces on
$E_k^{*,*}$ the differential 
\begin{equation}\label{delta}
\begin{array}{llll}
\delta _k : & E_k^{p,q} & \rightarrow & E_k^{p+k,q-k+1}\\
            & [h]_k  &      \mapsto          & [Dh]_k.
\end{array}
\end{equation} 

Applying  this to $h= f^{1}$, we obtain
$$\delta _k[f^{1}]_k = [Df^{1}]_k = [h^{0}_{1}- D_{k+1}\xi_1]_k.$$
Now, we show that $[f^{1}]_k$ is a cocycle representing a non zero class in $(E_{k+1}^{*,*}, \delta_k)$. 

On one hand, Since, 
\begin{equation}
Df^{1} = h^{0}_{0}+\cdots+h^{0}_{t} - D_k\xi_1 - (D-D_{k})\xi_1 = (h^{0}_{1}- D_{k+1}\xi_1) + h^{0}_{2} + \cdots + h^{0}_{t} - D_{\geq k+2}\xi_1
\end{equation}
 and
$D^2f^1 = 0$ we have, by degree reason, $D_k(h^{0}_{1}- D_{k+1}\xi_1) =0$. So, being $|h^{0}_{1}- D_{k+1}\xi_1| = N+1$, there is some $\xi_2\in A$ such that $h^{0}_{1}- D_{k+1}\xi_1 = D_k\xi_2$. Therefore, using the isomorphism $E_k^{p+k,q-k+1} \cong H^{p+q+1}(A,D_k)$, we deduce that $\delta _k[f^{1}]_k=0$ and $[f^{1}]_k$  represents a cohomology class in $E_{k+1}^{p,q}$.

On the other hand,   if  $[f^1]_k = \delta_k[g]_k= [Dg]_k$, for some $g\in  Z_k^{p-k+1,q+k-2}$, then $[f^1 - Dg]_k =0$ in $E_k^{p,q}$ or equivalently, using \eqref{Ekterm}, the component $pr_{-k+1}\circ (f^1 - Dg)$ in $\Lambda^p V$ of $f^1 - Dg$, which we  denote by $(f^1 - Dg)_{\Lambda^p V}$, decomposes as $(h_1 + Dh_2)_{\Lambda^p V}$ in  $Z^{p+1,q-1}_{k-1} + B_{k-1}^{p,q}$. Clearly, only the first term $D_k$ of $D$ acts and then we have $(f^1 - D_kg)_{\Lambda^p V}= (D_k{h_2})_{\Lambda^p V}$. Hence, applying $D_k$ we get  $D_kf^1_{\Lambda^pV} = D_kf_{\Lambda^pV}  - D_k{\xi_1}_{\Lambda^pV}= - D_k{\xi_1}_{\Lambda^pV}=0$ that is, $h_0^0 =0$. This  contradicts the fact that $f\in Z_{k-1}^{p,q}$, hence $[f^{1}]_k$ is indeed a cocycle representing a non zero class in $(E_{k+1}^{*,*}, \delta_k)$. 

We continue by reconsidering the above expression of $Df^1$ which actually  is as follows:
$$Df^{1} = D_k\xi_2 + h^{0}_{2}+\cdots+ h^{0}_{t} - D_{\geq k+2}\xi_1.$$
 This can be resumed (for the same $t$ due again to degree reason) as:
$$Df^{1} = h_1^1 + h^{1}_{2}+\cdots+ h^{1}_{t}$$
where $h_1^1 = h^{0}_{1}- D_{k+1}\xi_1  = D_k\xi_2$.

We then introduce  $f^2 =f^1 - \xi_2 = f - \xi_1 - \xi_2$ and show, using the same argument, that $[f^{2}]_k$ is  a cocycle representing a non zero class in $E_{k+2}^{p,q}$. Moreover, we may write
$$Df^{2} = h_2^2 + h^{2}_{3}+\cdots+ h^{2}_{t}$$
where $h_2^2 = h^{2}_{0}- D_{k+2}\xi_1  - D_{k+1}\xi_2$.

 We continue  this process until we  reach the stage $s$ where the spectral sequence degenerate. This induces  the  generating class of  $E_{\infty}^{p,q} =E_{s}^{p,q}$ represented by  $f^s = f - \xi_1 - \xi_2 - \cdots - \xi_s$. Thus, using the aforementioned identification $E_{\infty}^{p,q} = Ext_{(\Lambda V,d)}^{p+q}(\mathbb{K},(\Lambda V,d))$, we take $[f^s]$ as the  required generating class of $Ext_{(\Lambda V,d)}^{p+q}(\mathbb{K},(\Lambda V,d))$.

%stateimplies that the spectral sequence \eqref{Ext Milnor Moore spec} degenerate  at the term $E_t$.
%Therefore, using the topological setting i.e. \eqref{E-M spec-seq}  which is iquivalent to \eqref{Ext Milnor Moore spec} we 
As an application, we present below an immediate proof of \cite[Theorem 2]{A-R} which we recall here for convenience: 
\begin{theorem}\label{L-inv}
Let $X$ be a rationally elliptic space. Then,
the spectral sequence \eqref{E-M spec-seq} degenerate at $t$, hence,
$L_0(X) = l_0(X) = t-1$.
\end{theorem}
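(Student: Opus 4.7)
The plan is to exploit the algorithm of Section 4 together with the isomorphism of spectral sequences from \cite{R1} and the evaluation map $ev_{(\Lambda V,d)}$ to pin down both invariants $L_0(X)$ and $l_0(X)$ simultaneously.

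First, I will show that the Ext-Milnor--Moore spectral sequence \eqref{Ext Milnor Moore spec} degenerates exactly at the stage $t$ defined by the decomposition $Df_{|sV} = h_0^0 + h_1^0 + \cdots + h_t^0$ appearing in the algorithm. Because $(\Lambda V,d_k)$ and $(\Lambda V,d)$ are both Gorenstein of formal dimension $N$, every non-zero page $E_r^{*,*}$ of \eqref{Ext Milnor Moore spec} is one-dimensional and concentrated in a single bi-degree. The algorithm's successive representatives $[f^1], [f^2], \ldots, [f^t]$ therefore track the unique generator through the pages, with each modification $f^i = f^{i-1} - \xi_i$ witnessing a non-vanishing differential via the identity $h_{i-1}^{i-1} = D_k\xi_i$. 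The iteration terminates precisely when $Df^t = 0$, so no further differentials can act non-trivially; this gives $E_t^{*,*} = E_\infty^{*,*}$ while no earlier page equals $E_\infty^{*,*}$. Hence $\mathrm{L}_0(\Lambda V,d) = t-1$, and the isomorphism of spectral sequences established in \cite{R1} immediately yields $L_0(X) = t-1$.

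Second, I will transfer this conclusion to $l_0(X)$ by evaluating the Section 4 algorithm at $1 \in \mathbb{Q}$. By Remark \ref{rem4.2}, the specializations $f \mapsto f(1)$ and $\xi_i \mapsto \xi_i(1)$ convert the procedure of Section 4 into the Lechuga--Murillo / \cite{B-R} algorithm that tracks the top class $\omega_0 \in H^N(\Lambda V, d_k)$ through the pages of the Milnor--Moore spectral sequence \eqref{Milnor Moore spec}, producing in the limit the fundamental class $\omega$ of $(\Lambda V, d)$. Since $(\Lambda V,d)$ is elliptic, $ev_{(\Lambda V,d)}$ is non-zero by \cite[Theorem A]{Mu2} and therefore an isomorphism on each one-dimensional page in total degree $N$; in particular every non-trivial correction $\xi_i$ has non-trivial evaluation $\xi_i(1)$. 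The evaluated algorithm thus runs for the same $t$ steps, so \eqref{Milnor Moore spec} also degenerates at stage $t$. This gives $l_0(\Lambda V,d) = t-1$, and the standard identification with the topological spectral sequence finally yields $l_0(X) = t-1$.

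The main technical obstacle is verifying that evaluation faithfully transports the differentials of \eqref{Ext Milnor Moore spec} onto those of \eqref{Milnor Moore spec}, i.e.\ that no correction $\xi_i(1)$ trivializes in $H^N$ even though $\xi_i$ itself is non-trivial in $A$. This reduces to showing that the evaluation induces a morphism of spectral sequences that is bijective on each page in total degree $N$, which in turn follows from combining the Gorenstein one-dimensionality of the pages with the non-vanishing of $ev_{(\Lambda V,d)}$ on the fundamental class in the elliptic case.
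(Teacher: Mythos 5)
Your proposal follows essentially the same route as the paper's proof: you first read off $L_0(X)=t-1$ from the fact that the Section~4 algorithm tracks the unique Gorenstein generator through the pages of \eqref{Ext Milnor Moore spec} and terminates exactly when $Df^t=0$, and you then transfer the count to $l_0(X)$ by evaluating at $1\in\mathbb{Q}$ and invoking Remark~\ref{rem4.2} together with the Lechuga--Murillo/\cite{B-R} algorithm for the fundamental class. The only difference is that you explicitly flag and justify (via the non-vanishing of $ev_{(\Lambda V,d)}$ on the one-dimensional pages in total degree $N$) the step asserting that evaluation does not shorten the iteration --- a point the paper passes over in silence --- which is a welcome refinement rather than a genuinely different argument.
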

\begin{proof}
From the discussion  above, the ascendance in the algorithm (à la Lechuga-Murillo) inducing  $[f^t]$ from $[f]$, follows the ascendance in the spectral sequence \eqref{Ext Milnor Moore spec}. Thus, this latter degenerates exactly at $s=t$ and consequently 
% (we add $k-2$ for the same reason as above). On the other hand, $s=t$. Thus, 
%With the precision,  that, in the former,  first terms $E_2$ up to $E_{k-1}$ are identical to $A = Hom_{(\Lambda V,d)}(\Lambda V \otimes \Lambda sV, \Lambda V)$
$L_0(X) = t-1$. On the other hand,
 %For the other equality, namely ,
$ev_{(\Lambda V,d)}([f^t]) = [f^t(1)]$ is the fundamental class of $(\Lambda V,d)$ and it is also  provided by an algorithm (à la Lechuga-Murillo) \cite[Theorem 2]{B-R} from $[f(1)]$ (cf. Remark \ref{rem4.2} above). It results that  the spectral sequence \eqref{Milnor Moore spec} degenerates also at $t$ and consequently 
  % at $t$ we have $ev_{\infty}: E_{\infty}(A)^{p,q} \rightarrow E_{t}(A)^{p,q}$ is an isomorphism of one dimensional vector spaces. Therefore
    $L_0(X) = l_0(X)=t-1$.

%as $ev_{(\Lambda V,d)}\neq0$, there is an isomorphism between the (one dimensional) $E_{\infty}$ terms of the spectral  sequences \eqref{Ext Milnor Moore spec} and \eqref{Milnor Moore spec} or isomorphicaly between  \eqref{E-M spec-seq} and  \eqref{Top-M-M spec-seq} which occurs effectively at the stage $t$.
\end{proof}

\subsubsection{The invariant $\mathrm{R}_0(\Lambda V,d)$}
Recall from \cite{R1} that  the second author introduced, in terms of \eqref{Ext Milnor Moore spec} the invariant 
$$\mathrm{R}_0(\Lambda V,d) = sup\{m\; | \; E_{\infty}^{m,*} \neq 0\}.$$
This is given in spirit of Toomer's invariant which in turn is given in terms of \eqref{Milnor Moore spec} by:
 $$e_0(\Lambda V,d) := sup\{m\; | \; E_{\infty}^{m,*} \neq 0\}.$$

An equivalent definition of $\mathrm{R}_0(\Lambda V,d)$ given in \cite[Remark 3.4]{R1}, in terms of the generating class $\Omega$ of $Ext^{*}_{(\Lambda V,d)}(\mathbb{K},(\Lambda V,d))$, is as follows:
$$\mathrm{R}_0(\Lambda V,d) = sup\{m\; | \; \Omega \; \hbox{can be represented by a cocycle in}\; F^m(A)\}.$$

Our first result in this section gives another proof of \cite[Theorem 1]{A-R} which relies on the algorithm defining $f^t$:

\begin{theorem}\label{R-inv}
Let $X$ be a rationally elliptic  CW-complex. Then,  $\mathrm{R}_0(X)=e_0(X)=cat_0(X).$ 
\end{theorem}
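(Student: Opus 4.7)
The plan is to exploit the explicit algorithm of Section 4 (Theorem \ref{main 2}), which produces a cocycle $f^t \in F^p A$ that generates $Ext^N_{(\Lambda V, d)}(\mathbb{K}, (\Lambda V, d))$ and whose evaluation $f^t(1)$ represents the fundamental class $\omega$ of $(\Lambda V, d)$. Since $X$ is elliptic, the Felix--Halperin equality $cat_0(X) = e_0(X)$ from \cite{FHL} reduces the theorem to proving $\mathrm{R}_0(X) = e_0(X)$.

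First I would pin down $\mathrm{R}_0(X)$. The algorithm outputs $f^t \in F^p A$ as a cocycle representative of the Ext generator $\Omega$, where $p$ is the filtration degree of the unique bi-degree $(p, N-p)$ at which the $E_\infty$ term of \eqref{Ext Milnor Moore spec} sits (unique because the Gorenstein hypothesis forces $Ext^N_{(\Lambda V, d)}$ to be one-dimensional). By the equivalent definition of $\mathrm{R}_0$ recalled just above, this yields $\mathrm{R}_0(X) \geq p$, and the concentration of $E_\infty$ in a single bi-degree of total degree $N$ upgrades this to $\mathrm{R}_0(X) = p$.

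Next I would bound $e_0(X)$ from below. By Remark \ref{rem4.2}, $f^t(1) \in (\Lambda^{\geq p} V)^N$ is a representative of $\omega$, so formula \eqref{e_0(omega)} gives $e_0(X) = e_0(\omega) \geq p$. Combined with the general inequality $\mathrm{R}_0(X) \leq cat_0(X) = e_0(X)$ established in \cite{R2}, this yields $p = \mathrm{R}_0(X) \leq e_0(X)$.

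The hard part will be the reverse estimate $e_0(X) \leq p$, which closes the chain of equalities at the common value $p$. My approach would be indirect: suppose $\omega$ admits a representative $\alpha \in (\Lambda^{>p} V)^N$, and apply the lifting lemma of \S 2 for $(\Lambda V, d)$-semifree modules to the resolution $(\Lambda V \otimes \Lambda sV, d) \to (\mathbb{Q}, 0)$ to build a cocycle $\tilde f \in F^{p+1} A$ with $\tilde f(1) = \alpha$. Since $Ext^N$ is one-dimensional, $[\tilde f]$ would be a non-zero multiple of $\Omega$, contradicting $\mathrm{R}_0(X) = p$. The delicate step will be verifying that $\tilde f$ can indeed be produced inside $F^{p+1} A$: at each recursive stage one must solve $d\tilde f(sv) = \pm(\tilde f(v) - \tilde f(sdv))$ within $\Lambda^{\geq p+1} V$, and solvability should follow by mirroring the construction of $f^t$ in Section 4, using the Gorenstein--Poincar\'e structure of $(\Lambda V, d)$ to show that each obstruction lies in $d(\Lambda^{\geq p+1} V)$.
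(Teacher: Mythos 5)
Your identification $\mathrm{R}_0(X)=p$ and your lower bound $e_0(X)\geq p$ are sound and essentially coincide with the paper's argument: both rest on the fact that every page of \eqref{Ext Milnor Moore spec} is concentrated in the single bi-degree $(p,N-p)$ (Gorensteinness of $(\Lambda V,d_k)$) and that the algorithm's output $f^t\in F^pA$ evaluates to a representative $f^t(1)\in(\Lambda^{\geq p}V)^N$ of $\omega$.

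The gap is in the reverse inequality $e_0(X)\leq p$, which is the actual content of the theorem. Your contradiction argument hinges on lifting a hypothetical representative $\alpha\in(\Lambda^{\geq p+1}V)^N$ of $\omega$ to a cocycle $\tilde f\in F^{p+1}A$, and the inductive step requires solving $d(\tilde f(sv))=\pm(v\alpha-\tilde f(s\,dv))$ with $\tilde f(sv)\in\Lambda^{\geq p+1}V$. The right-hand side is indeed a cocycle of degree $>N$, hence exact because $H^{>N}(\Lambda V,d)=0$; but exactness in $\Lambda V$ does not produce a primitive in $\Lambda^{\geq p+1}V$. In the filtration \eqref{fil-MM-spec}, a cocycle lying in $F^{p+2}\Lambda V$ that dies only at page $r$ of \eqref{Milnor Moore spec} admits a primitive merely in $F^{p+2-r+1}\Lambda V$, and neither the Gorenstein nor the Poincar\'e duality structure bounds $r$ by $1$; even the degeneration at page $t$ (Theorem \ref{L-inv}) only bounds the filtration loss by $t-2$. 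So your assertion that ``each obstruction lies in $d(\Lambda^{\geq p+1}V)$'' is not a consequence of anything established earlier --- it is essentially a restatement of the inequality $e_0\leq \mathrm{R}_0$ you are trying to prove. The paper circumvents this entirely: the chain evaluation map \eqref{cevmap} preserves the filtrations \eqref{filt-Ext-MM spec} and \eqref{fil-MM-spec}, hence induces a morphism of spectral sequences from \eqref{Ext Milnor Moore spec} to \eqref{Milnor Moore spec}; by Theorem \ref{L-inv} together with \cite{B-R} (cf. Remark \ref{rem4.2}) the image class survives to $E_\infty^{p,N-p}(\Lambda V)\neq 0$, and since $\dim H^N(\Lambda V,d)=1$ forces $E_\infty^{m,N-m}(\Lambda V)$ to be nonzero for exactly one $m$, namely $m=e_0(\omega)$ by \eqref{e_0(omega)}, one gets $e_0(X)=p$ with no lifting construction. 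To repair your proof you would either have to import that spectral-sequence comparison or prove the filtered-exactness claim directly.
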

\begin{proof}
By hypothesis, $X$ being elliptic, its
Sullivan minimal model $(\Lambda V,d)$    is elliptic hence  $\dim V < \infty$ and $\dim H(\Lambda V,d)<\infty$. In particular $(\Lambda V,d_k)$ is a Gorenstein algebras, that is 
$$Ext^{*,*}_{(\Lambda V,d_k)}(\mathbb{K},(\Lambda V,d_k)) = Ext^{p',q'}_{(\Lambda V,d_k)}(\mathbb{K},(\Lambda V,d_k))$$
 where the unique bi-degree $(p',q')$ derived from the filtration (\ref{filt-Ext-MM spec}) satisfies $p'+q'=N$, the formal dimension of $(\Lambda V,d)$. By the convergent spectral sequence \eqref{Ext Milnor Moore spec}, we have also
 $$Ext^{*,*}_{(\Lambda V,d)}(\mathbb{K},(\Lambda V,d)) = Ext^{p',q'}_{(\Lambda V,d)}(\mathbb{K},(\Lambda V,d)).$$
%, $(\Lambda V,d)$ are both Gorenstein algebras and $\dim H^N(\Lambda V,d)=1$ where $N$ denotes  their common formal dimension (see (\ref{fd})). Thus, 
% the first term $Ext^{*,*}_{(\Lambda V,d_k)}(\mathbb{K},(\Lambda V,d_k))$ and the infinity term $E_{\infty}^{*,*}(A)\cong Ext^{*,*}_{(\Lambda V,d)}(\mathbb{K},(\Lambda V,d))$ of the convergent spectral sequence \eqref{Ext Milnor Moore spec}  are both one dimensional. Consequently, they are both  concentrated in a unique bi-degree $(p',q')$ coming from the filtration (\ref{filt-Ext-MM spec}), with $p'+q' = N$. 
This implies $\mathrm{R}_0(\Lambda V,d) = p' = \mathrm{R}_0(X)$ (cf. \S 6 for more explanation). 
   
Next,  since the chain evaluation map (\ref{cevmap}):
$$\begin{array}{cccc}
cev: & A=Hom_{(\Lambda V,d)}((\Lambda V \otimes \Lambda sV, D) , (\Lambda V,d)) & \rightarrow  & (\Lambda V,d)\\
     &       f          & \mapsto &   f(1)
\end{array}
$$ 
  preserves the filtrations (\ref{filt-Ext-MM spec}) and (\ref{fil-MM-spec}), in homology, it induces a morphism between the convergent  spectral sequences \eqref{Ext Milnor Moore spec} and  \eqref{Milnor Moore spec}. Hence, the the $E_\infty$ morphism $$E_{\infty}(ev): E_{\infty}^{p',q'}(A)\cong Ext^{p',q'}_{(\Lambda V,d)}(\mathbb{K},(\Lambda V,d)) \rightarrow E_{\infty}^{p',q'}(\Lambda V)\cong H^{p'+q'}(\Lambda V,d)$$
    is indeed an isomorphism (recall that   $\dim H^{N}(\Lambda V,d)=1$). By the above theorem, $L_0(\Lambda V,d) = l_0(\Lambda V,d)$ therefore, by (\ref{e_0}) we obtain 
    %$p'$ is the unique integer $m$ satisfying   $E_{\infty}^{m,N-m}(\Lambda V)\neq 0$. It results that
  %Using once again ellipticity of $(\Lambda V,d)$, we have  $ev_{(\Lambda V,d)}: Ext^{p',q'}_{(\Lambda V,d)}(\mathbb{K},(\Lambda V,d)) \rightarrow H^{p'+q'}(\Lambda V,d)$ is non-zero \cite{Mu94, Mu2}.  Therefore, by identifying $E_{\infty}^{p',q'}(A)$ with  $Ext^{p',q'}_{(\Lambda V,d)}(\mathbb{K},(\Lambda V,d))$ and,  $E_{\infty}^{p',q'}(\Lambda V)$ with  $H^{p'+q'}(\Lambda V,d)$ we obtain an identification between $ev_{(\Lambda V,d)}: Ext^{p',q'}_{(\Lambda V,d)}(\mathbb{K},(\Lambda V,d)) \rightarrow H^{p'+q'}(\Lambda V,d)$ and the infty morphism
   % $E_{\infty}(ev): E_{\infty}^{p',q'}(A) \rightarrow E_{\infty}^{p',q'}(\Lambda V)$. 
    %But, this latter sends $[f^t]$ to $[f^t(1)]$ so that (by identification)
     %$ev_{(\Lambda V,d)}([f^t]) = [f^t(1)]\neq 0$.  
     %being $e_0{(\Lambda V,d)}$ the biggest integer $m$ such that $E_{\infty}^{m,N-m}(\Lambda V)\neq 0$, we obtain 
      $e_0{(\Lambda V,d)} = p'=e_0(X)$ and thus, $\mathrm{R}_0(X)=e_0(X)=cat_0(X).$
 
\end{proof}

To illustrate, we apply the algorithm to a specific Sullivan model

  \begin{example}\label{exp4.5}
 Let us  reconsider the Sullivan model studied in example \ref{ExampleA} : $(\Lambda x_{2},x_{4},y_{5},y_{7}, d)$, $dx_{2}=dx_{4}=0$, $dy_{5}=x_{2}^{3}-2x_{2}x_{4}$, $dy_{7}=x_{4}^{2}-x_{2}^{2}x_{4}$. Its formal dimension is $N=8$, the top class is $\omega=[x_{2}^{2}x_{4}]$ and its quadratic associated model is $(\Lambda V,d_{2})=\,(\Lambda x_{2},x_{4},y_{5},y_{7},,\, d)$ with $d_{2}x_{2}=d_{2}x_{4}=0$, $d_{2}y_{5}=-2x_{2}x_{4}$, $d_{2}y_{7}=x_{4}^{2}$. Let $V = Span(x_2, x_4, y_5, y_7)$, thus $(\Lambda V,d_2)$ is pure and non-elliptic and $H^{N}(\Lambda V,d_{2})=\mathbb{Q}[x_{2}^{4}]$.
  
 Recall that,  by using the algorithm described in \S 3, we have also  determined in  Example \ref{ExampleA} (resp. Example \ref{ExampleB}) the generating class $\Omega$ (resp. $\Omega_{2}$) of $Ext^{8}_{(\Lambda V,d)}(\mathbb{K};(\Lambda V,d))$ (resp. $Ext^{(2,6)}_{(\Lambda V,d_{2})}(\mathbb{K};(\Lambda V,d_{2}))$). Now, consider the spectral sequence:
 \begin{center}
 	$Ext^{(2,6)}_{(\Lambda V,d_{2})}(\mathbb{K};(\Lambda V,d_{2}))\implies Ext^{8}_{(\Lambda V,d)}(\mathbb{K};(\Lambda V,d))$.
 \end{center}
 
 Now we determine $\Omega$ by using the Lechuga-Murillo algorithm. Let denote, as in Example \ref{ExampleA}, $\Omega_2 =[f]$ and  consider (formally) $f(1)=2x_{4}^2$ (we can not put  $f(1)=x_2^4$  since then $ev_{(\Lambda V,d_2)}\not=0$ which contradicts non-ellipticity of $(\Lambda V,d_2)$).

The first step will determine $\xi_{1}$. We  begin by its value on $\Lambda V$. This is given by $\xi_{1}(1)$. Using \cite{B-R}  we can use Remark \ref{rem4.2} above or else, Lechuga-Murillo's algorithm as in  \cite[Theorem 5]{L-M02} even if $(\Lambda V,d_2)$ is not elliptic. In particular,  we have $\xi_{1}(1)\in \Lambda ^3V$.
Moreover, 	$|\xi_1(1)|= |f(1)|=8$ which imposes $\xi_1(1)=\mu x_2^2x_4$ (some $\mu \in \mathbb{K}$), hence $d_2(\xi_1(1))=0$.  Thus, 
$$\xi_1(x_2)=\mu x_2^3x_4,\;  \xi_1(x_4)=\mu x_2^2x_4^2,\;  \xi_1(y_5)=\mu x_2^2x_4y_5,\; \hbox{and}\;  \xi_1(y_7)=\mu x_2^2x_4y_7.$$

  Next we consider $\xi_1$ on $\Lambda (sV)$. Recall that $f$ is given on  $\Lambda (sV)$ by Example \ref{ExampleB} with $(Df)(sV)\in\Lambda^{\geq3}V.$ We recall it for our use:

  $$f(sx_2)=ax_4y_5+2(a+1)x_2y_7\;(a\in\mathbb{Q}),\quad f(sx_{4})=2x_{4}y_{7},\quad f(sy_{5})=-2y_{5}y_{7}\;\hbox{and}\; f(sy_{7})=0.$$

\begin{itemize}      
\item  For $sx_2$:
\begin{align*}
	(Df)(sx_{2}) &= d(f(sx_{2}))- f(dsx_{2})\\
	&=d(ax_4y_5 + 2(a+1)x_2y_7)- f(x_{2}) \\
	&= ax_4(x_2^3-2x_2x_4) +2(a+1)x_2(x_4^2-x_2^2x_4) -2x_2x_4^2\\
	& = -(a+2)x_2^3x_4.
\end{align*}

Hence, by word length reason, $h_{0}^{0}(sx_{2})= 0$. On the other side,
\begin{align*}
(D_{2}\xi_{1})(sx_{2})&=(d_2\circ \xi_1 - \xi_1 \circ d_2)(sx_2)\\
&=d_2(\xi_1(sx_2))-\xi_1(x_2)=0.
\end{align*}
Therefore, $d_2(\xi_1(sx_2))=\xi_1(x_2)=\mu x_2^3x_4\in\Lambda^{4}V$. Now, since  $d_2(\xi_1(sx_2))\in\Lambda^{3}V$, necessarily $\mu=0$, hence, on one hand, $\xi_1(1)=0$ and consequently, $\xi_1 =0$ on all $\Lambda V$. On the other hand,  $d_2(\xi_1(sx_2))=0$. 

Now, since, $\xi_1(sx_2)\in (\Lambda^2 V)^9$, we have $\xi_1(sx_2) = a'x_4y_5 + b'x_2y_7$ with $-2a' + b'=0$. It follows that
$\xi_1(sx_2) = a'(x_4y_5 + 2x_2y_7)$, where $a'\in \mathbb{Q}$.

For $f^1 = f -\xi_1$, we have :
\begin{align*}
	f^{1}(x_{2})&=f(x_{2})=2x_2x_4^2 \\
	f^{1}(sx_{2})&=f(sx_{2})- \xi_1(sx_2) \\
	&=  ax_4y_5 + 2(a+1)x_2y_7 - a'(x_4y_5 + 2x_2y_7) \\
	& = \alpha x_4y_5 + 2(\alpha +1) x_2y_7\; (\hbox{some}\;  \alpha = a-a' \in \mathbb{Q}).
\end{align*}
 
\item For $sx_4$:
  \begin{align*}
  (Df)(sx_{4})&=d(f(sx_{4}))-f(dsx_{4})\\
  &=d(f(sx_{4}))-f(x_{4}) \\
  &= d(2x_4y_7)-2x_4^3\\
  &= 2x_{4}^{3}-2x_{2}^{2}x_{4}^{2}-2x_4^3\\
  &= -2x_{2}^{2}x_{4}^{2}
  \end{align*}

Then, as before, $h_{0}^{0}(sx_{4})=D_2(\xi_1)(sx_{4})=0$. On the other side,
\begin{align*}
(D_{2}\xi_{1})(sx_{4})&=(d_2\circ \xi_1-\xi_1\circ d_2)(sx_4)\\
&= d_2(\xi_1(sx_4))-\xi_1(d_2(sx_4))\\
&=d_2(\xi_1(sx_4))-\xi_1(x_4).
\end{align*}
Then $d_2(\xi_1(sx_4))=\xi_1(x_4)= 0$. But, since
 $\xi_1(sx_4)\in (\Lambda^2 V)^{11}$, we have $\xi_1(sx_4)=cx_4y_7$ which leads to $c=0$ and consequently, $\xi_1(sx_4)=0$.

For $f^1 = f -\xi_1$, we get
\begin{align*}
f^{1}(x_{4})&=f(x_{4})=2x_4^3\\f^{1}(sx_{4})&=f(sx_{4})-\xi_{1}(sx_{4})=2x_4y_7.
\end{align*}

\item For $sy_5$:
\begin{align*}
(Df)(sy_{5})&=d(f(sy_{5}))-f(d(sy_{5}))\\
&=d(-2y_5y_7)-f(y_5- sd(y_5))\\ &=-2(x_{2}^{3}-2x_{2}x_{4})y_7+2(x_{4}^{2}-x_{2}^{2}x_{4})y_5- 2x_4^2y_5+f(s(x_2^3-2x_2x_4))\\
%&=-2(x_{2}^{3}-2x_{2}x_{4})y_7+2(x_{4}^{2}-x_{2}^{2}x_{4})y_5- 2x_4^2y_5+x_2^2(a x_4y_5 + 2(a+1) x_2y_7)-2x_2(2x_4y_7)\\
&=2ax_2^3y_7 +(a-2)x_2^2x_4y_5.
\end{align*}
 
As above, we have  $h_{0}^{0}(sy_{5})=0.$
Once again,  
\begin{align*}
h_0^0(sy_5)&=(D_{2}\xi_{1})(sy_{5})=(d_2\circ \xi_1-\xi_1\circ d_2)(sy_{5})\\
&=d_2(\xi_1(sy_5))-\xi_1(d_2(sy_5))\\
&= d_2(\xi_1(sy_5))-\xi_1(y_5)+\xi_1(s(-2x_2x_4))\\
&= d_2(\xi_1(sy_5))-\xi_1(y_5)-2x_2\xi_1(sx_4)\\
&= d_2(\xi_1(sy_5)).
\end{align*}
Then $d_2(\xi_1(sy_5))=0$. As before, since
 $\xi_1(sy_5)\in (\Lambda^2 V)^{12}$, we have $\xi_1(sy_5)=ey_5y_7$ which leads to $e=0$ and consequently, $\xi_1(sy_5)=0$.

Therefore, for $f^1$:
\begin{align*}
f^1(y_5)&=f(y_5)=2x_4^2y_5\\ 
f^{1}(sy_{5})&=f(sy_5)=-2y_5y_7.
\end{align*}

\item For $sy_7$:
As $df(sy_{7})=0$, we have 
\begin{align*}
(Df)(sy_{7})&=d(f(sy_{7}))-f(dsy_7)\\
&=-f(y_7- sd(y_7))=-f(y_7)+f(sd(y_7))\\
&=-2x_4^2y_7+f(s(x_4^2-x_2^2x_4))\\
&=-2x_4^2y_7+(x_4-x_2^2)f(sx_4)\\
&=-2x_4^2y_7+(x_4-x_2^2)(2x_4y_7)\\
&=-2x_2^2x_4y_7.
\end{align*}

Once again, we obtain
$h_{0}^{0}(sy_{7})=0$. But, 
\begin{align*}
h_{0}^{0}(sy_{7})&=(D_{2}\xi_{1})(sy_{7})= (d_2\circ \xi_1 - \xi_1 \circ d_2)(sy_{7})\\
&=d_2(\xi_1(sy_7))-\xi_1(d_2(sy_7))\\
&=d_2(\xi_1(sy_7))-\xi_1(y_7)+\xi_1(s(d_2y_7)))\\
&=d_2(\xi_1(sy_7))+\xi_1(sx_4^2)\\
&=d_2(\xi_1(sy_7))+x_4\xi_1(sx_4)\\
&=d_2(\xi_1(sy_7)).
\end{align*}

So, $d_2(\xi_1(sy_7))=0$ and consequently, since $\xi_1(sy_7)\in (\Lambda^2 V)^{14}$, we obtain $\xi_{1}(sy_{7})=0$. 

For $f^1$:
\begin{align*}
f^1(y_7)&=f(y_7)-\xi_1(y_7)=2x_4^2y_7\\ f^{1}(sy_{7})&=f(sy_{7})-\xi_1(sy_7)=0.
\end{align*}

\end{itemize}

We must consider $\xi_2$ because, in the expression for $Df|_{sV}$, there are terms in $\Lambda^4 V$.

We determine firstly $\xi_{2}(1)$ which (for the same reason as for $\xi_{1}(1)$) satisfies: \\
	$\left\{
	\begin{array}{l}
		d_2(\xi_2(1))=0 \\
		\xi_2(1)\in\Lambda^4V,\\
		|\xi_2(1)|=8.
	\end{array}
	\right.$
	Then $\xi_2(1)=\beta x_2^4$,\\  so, 
	$\xi_2(x_2)=\beta  x_2^5$, $\xi_2(x_4)=\beta x_2^4x_4$, $\xi_2(y_5)=\beta  x_2^4y_5$ and  $\xi_2(y_7)=\beta  x_2^4y_7$.
	
Next we consider $\xi_2$ on $\Lambda (sV)$. Recall that $f^1$ is given by 
\begin{align*}
	f^{1}(x_{2})&=f(x_{2})=2x_2x_4^2,\\
	f^{1}(x_{4})&=f(x_{4})=2x_4^3,\\
	f^1(y_5)&=f(y_5)=2x_4^2y_5,\\
	f^1(y_7)&=f(y_7)=2x_4^2y_7,\\
	f^{1}(sx_{2})&=f(sx_{2})-\xi_{1}(sx_2)= \alpha x_4y_5 + 2(\alpha +1) x_2y_7\; (\hbox{some}\;  \alpha = a-a' \in \mathbb{Q}),\\
	f^{1}(sx_{4})&=f(sx_{4})=2x_4y_7,\\
	f^{1}(sy_{5})&=f(sy_5)=-2y_5y_7,\\
	f^{1}(sy_{7})&=f(sy_{7})=0.
\end{align*}

\begin{itemize}
\item  We have $(Df^1)(sx_{2})\in\Lambda^{\geq4}V$ (since $p=2$ and $k=2$), and 
\begin{align*}
	(Df^1)(sx_{2})&=d(f^1(sx_{2}))-f^1(dsx_{2})\\
	&=d(\alpha x_4y_5+2(\alpha +1)x_2y_7)-2x_2x_4^2\\
	&=\alpha x_4(x_{2}^{3}-2x_{2}x_{4})+2(\alpha +1) x_2(x_4^2-x_{2}^2x_{4})-2x_2x_4^2\\
	&=-(\alpha+2)x_2^3x_4.
\end{align*}

 Then  $h_{1}^{1}(sx_{2})= -(\alpha +2) x_2^3x_4$.

\begin{align*}
	(D_{2}\xi_{2})(sx_{2})&=(d_2\circ \xi_2 - \xi_2 \circ d_2)(sx_2)\\
	&=d_2(\xi_2(sx_2))-\xi_2(x_2)=-(\alpha +2) x_2^3x_4.
\end{align*}
Therefore, $d_2(\xi_2(sx_2))=\xi_2(x_2)-(\alpha +2) x_2^3x_4=\beta x_2^5  -(\alpha +2) x_2^3x_4$. 
But, since (i) $x_2^5$ is not a $d_2$-coboundary, (ii)  $d_2(\xi_2(sx_2))\in\Lambda^{4}V$ and (iii) $-(\alpha +2) x_2^3x_4=d_2(\frac{\alpha +2}{2}x_2^2y_5)$, we should have $\beta=0$ and  $\xi_2(sx_2)=\frac{\alpha +2}{2}x_2^2y_5$. In particular, $\xi_2(1) =0$ and therefore, $\xi_2$ is zero on all $\Lambda V$. 

Now, for $f^2 = f^1 -\xi_2 = f- \xi_{1} -\xi_{2}$ as in the algorithm, we deduce from above that
\begin{align*}
	f^{2}(x_{2})&=f(x_{2})=2x_2x_4^2\\
	f^{2}(sx_{2})&=f^1(sx_{2})-\xi_{2}(sx_{2})= (\alpha x_4 - \frac{\alpha +2}{2}x_2^2)y_5 + 2(\alpha +1) x_2y_7.
\end{align*}
\item
\begin{align*}
	(Df^1)(sx_{4})&=d(f^1(sx_{4}))-f^1(dsx_{4})\\
	&=d(f^1(sx_{4}))-f^1(x_{4}) \\
	&=d(2x_4y_7)-2x_4^3\\
	&=2x_{4}^{3}-2x_{2}^{2}x_{4}^{2}-2x_4^3\\
	&=-2x_{2}^{2}x_{4}^{2}.
\end{align*}
Then, $h_{1}^{1}(sx_{4})=D_2(\xi_2)=-2x_{2}^{2}x_{4}^{2}$. On the other side,
\begin{align*}
	(D_{2}\xi_{2})(sx_{4})&=(d_2\circ \xi_2-\xi_2\circ d_2)(sx_4)\\
	&= d_2(\xi_2(sx_4))-\xi_2(d_2(sx_4))\\
	&=d_2(\xi_2(sx_4))-\xi_2(x_4)= d_2(\xi_2(sx_4)).
\end{align*}
Then $d_2(\xi_2(sx_4))=-2x_{2}^{2}x_{4}^{2}= d_2(kx_2x_4y_5 + lx_2^2y_7)$. Thus, $\xi_2(sx_4)=kx_2x_4y_5+lx_2^2y_7$ with $-2k + l = -2$, that is with $l = 2(k-1)$. It follows that $\xi_2(sx_4)= kx_2x_4y_5 + 2(k-1) x_2^2y_7$.

Thus for $f^2 = f^1 -\xi_2$, we get
\begin{align*}
	f^{2}(x_{4})&=f^1(x_{4})-\xi_{2}(x_{4})=2x_4^3\\
	f^{2}(sx_{4})&=f^1(sx_{4})-\xi_{2}(sx_{4})=2x_4y_7-kx_2x_4y_5 - 2(k-1) x_2^2y_7\\
	&=-kx_2x_4y_5+(2x_4-2(k-1)x_2^2)y_7.
\end{align*}

\item \begin{align*}
(Df^1)(sy_{5})&=d(f^1(sy_{5}))-f^1(d(sy_{5}))=d(-2y_5y_7)-f^1(y_5- sd(y_5))\\
&=-2(x_{2}^{3}-2x_{2}x_{4})y_7  + 2(x_{4}^{2}  -x_{2}^{2}x_{4})y_5-2x_4^2y_5+f^1(s(x_2^3-2x_2x_4))\\
%&=-2(x_{2}^{3}-2x_{2}x_{4})y_7+2(x_{4}^{2}-x_{2}^{2}x_{4})y_5- 2x_4^2y_5+ x_2^2(\alpha x_4y_5 + 2(\alpha +1)x_2y_7) -4x_2x_4y_7\\
&=2\alpha x_2^3y_7 + (\alpha -2)x_2^2x_4y_5.
\end{align*}
 
Then, $h_{1}^{1}(sy_{5})=2\alpha x_2^3y_7 + (\alpha -2)x_2^2x_4y_5.$
Once again,  
\begin{align*}
h_1^1(sy_5)&=(D_{2}\xi_{2})(sy_{5})=(d_2\circ \xi_2-\xi_2\circ d_2)(sy_{5})\\
&=d_2(\xi_2(sy_5))-\xi_2(d_2(sy_5))\\
&= d_2(\xi_2(sy_5))-\xi_2(y_5)+\xi_2(s(-2x_2x_4))\\
&= d_2(\xi_2(sy_5))-2x_2\xi_2(sx_4)\\
&= d_2(\xi_2(sy_5))-2x_2(kx_2x_4y_5 + 2(k-1) x_2^2y_7)\\
&= d_2(\xi_2(sy_5))-2kx_2^2x_4y_5-4(k-1)x_2^3y_7.
\end{align*}
Thus,
\begin{align*}
d_2(\xi_2(sy_5))&= 2kx_2^2x_4y_5+4(k-1)x_2^3y_7+ 2\alpha x_2^3y_7 + (\alpha -2)x_2^2x_4y_5 \\
                &=(2k +\alpha - 2)[x_2^2x_4y_5 + 2x_2^3y_7].
\end{align*}
But $\xi_2(sy_5)\in (\Lambda^3V)^{12}$, hence, it has (uniquely) the form $\xi_2(sy_5)=cx_4^3$ (some $c\in\mathbb{Q}$)  so that $d_2(\xi_2(sy_5))=0$ and consequently, $2k+\alpha-2=0$ i.e. $k=\frac{2-\alpha}{2}$. In particular, actually: 
\begin{align*}
\xi_2(sx_4)= kx_2x_4y_5+2(k-1)x_2^2y_7=\frac{2-\alpha}{2}x_2x_4y_5-\alpha  x_2^2y_7\\
f^{2}(sx_{4})=\frac{\alpha-2}{2}x_2x_4y_5+\alpha x_2^2y_7+2x_4y_7.
\end{align*}
And, 
\begin{align*}
f^2(y_5)&=f^1(y_5)-\xi_2(y_5)=2x_4^2y_5,\\ f^{2}(sy_{5})&=f^1(sy_5)-\xi_2(sy_5)=-2y_5y_7 - cx_4^3 \; (\hbox{some}\;  c\in \mathbb{Q}).
\end{align*}

\item 
\begin{align*}
	(Df^1)(sy_{7})&=d(f^1(sy_{7}))-f^1(dsy_7)\\
	&=-f^1(y_7- sd(y_7))=-2x_4^2y_7+f^1(sd(y_7))\\
	&=-2x_4^2y_7+f^1(s(x_4^2-x_2^2x_4))\\
	&=-2x_4^2y_7+(x_4-x_2^2)f^1(sx_4)\\
	&=-2x_4^2y_7+(x_4-x_2^2)(2x_4y_7)\\
	&=-2x_2^2x_4y_7.
\end{align*}

so that
$h_{1}^{1}(sy_{7})=-2x_2^2x_4y_7$. But, 
\begin{align*}
	h_{1}^{1}(sy_{7})&=(D_{2}\xi_{2})(sy_{7})= (d_2\circ \xi_2 - \xi_2 \circ d_2)(sy_{7})=d_2(\xi_2(sy_7))-\xi_2(d_2(sy_7))\\
	&=d_2(\xi_2(sy_7))-\xi_2(y_7)+\xi_2(s(d_2y_7)))\\
	&=d_2(\xi_2(sy_7))+\xi_2(sx_4^2)\\
	&=d_2(\xi_2(sy_7))+x_4(\frac{2- \alpha}{2}x_2x_4y_5 - \alpha  x_2^2y_7)\\
	&=d_2(\xi_2(sy_7))+ \frac{2-\alpha}{2}x_2x_4^2y_5 - \alpha x_2^2x_4y_7,
\end{align*}
then
\begin{align*}d_2(\xi_2(sy_7)) &=-2x_2^2x_4y_7- \frac{2-\alpha}{2}x_2x_4^2y_5 + \alpha x_2^2x_4y_7\\
                               &= \frac{\alpha -2}{2}x_2x_4^2y_5 + (\alpha -2)x_2^2x_4y_7 = \frac{2 - \alpha}{2}x_2[ - 2x_2x_4y_7 - x_4^2y_5]\\
                               &=  d_2(\frac{2-\alpha}{2}x_2y_5y_7).
 \end{align*}
 It follows that  $\xi_{2}(sy_{7})=\frac{2-\alpha}{2}x_2y_5y_7$. Thus
\begin{align*}
	f^2(y_7)&=f^1(y_7)-\xi_2(y_7)=2x_4^2y_7,\\ f^{2}(sy_{7})& = f^1(sy_{7})-\xi_2(sy_7)= \frac{\alpha -2}{2}x_2y_5y_7.
\end{align*}

\end{itemize}

We next pass to $\xi_3$, hence to $f^3$.
We determine firstly $\xi_{3}(1)$ which (for the same reason as for $\xi_{1}(1)$) satisfies: 

	$\left\{
	\begin{array}{l}
		d_2(\xi_3(1))=0 \\
		\xi_2(1)\in\Lambda^5V,\\
		|\xi_2(1)|=8.
	\end{array}
	\right.$
	
		For degree reason, $\xi_3(1)=0$,  so, 
		$\xi_3$ is zero on all $\Lambda V$ and $f^3 = f^2 = f^1 =f$ on all $\Lambda V$.

	Next we consider $\xi_3$ on $\Lambda (sV)$. Recall that $f^2$ is given by 

	\begin{align*}
		f^{2}(x_{2})&=2x_2x_4^2,\\
		f^{2}(x_{4})&= 2x_4^3,\\
		f^2(y_5)&=2x_4^2y_5,\\
		f^2(y_7)&=2x_4^2y_7,\\ 
		f^2(sx_2)&= (\alpha x_4 - \frac{\alpha +2}{2}x_2^2)y_5 + 2(\alpha +1) x_2y_7,\\
		f^{2}(sx_{4})&=\frac{\alpha-2}{2}x_2x_4y_5+\alpha x_2^2y_7+2x_4y_7,\\
		f^{2}(sy_{5})&=-2y_5y_7 - cx_4^3 \; (\hbox{some}\;  c\in \mathbb{Q}),\\
		f^{2}(sy_{7})& = \frac{\alpha - 2}{2}x_2y_5y_7.
	\end{align*}
	
	\begin{itemize}	
	\item  We have $(Df^2)(sx_{2})\in\Lambda^{\geq4}V$ (since $p=2$ and $k=2$), and 
	\begin{align*}
		(Df^2)(sx_{2}) &= d(f^2(sx_{2}))- f^2(dsx_{2})\\
		&=d\left((\alpha x_4 - \frac{\alpha +2}{2}x_2^2)y_5+2(\alpha+1) x_2y_7\right)-f^2(x_{2})\\
		&=(\alpha x_4-\frac{\alpha +2}{2}x_2^2)(x_2^3-2x_2x_4)+2(\alpha+1) x_2(x_4^2-x_2^2x_4)-2x_2x_4^2\\
		&=-\frac{\alpha+2}{2}x_2^5.
	\end{align*}
	
	Hence, $h_{2}^{2}(sx_{2})=-\frac{\alpha +2}{2}x_2^5$. On the other side,
	\begin{align*}
		(D_{2}\xi_{3})(sx_{2})&=(d_2\circ \xi_3 - \xi_3 \circ d_2)(sx_2)\\
		&=d_2(\xi_3(sx_2))-\xi_3(x_2)=d_2(\xi_3(sx_2))=- \frac{\alpha +2}{2}x_2^5.
	\end{align*}
	But this is not a $d_2$-coboundary, unless if $\alpha = -2$ in which case $d_2(\xi_3(sx_2)) =0$. Now,  $\xi_3(sx_2)\in (\Lambda ^4V)^9$, hence $\xi_3(sx_2) = 0$. 
	
	Now, for $f^3 = f^2 -\xi_3$ as in the algorithm, we deduce from above that (for $\alpha = -2$)
	\begin{align*}
		f^{3}(x_{2})&=f^2(x_{2})=2x_2x_4^2\\
		f^{3}(sx_{2})&=f^2(sx_{2})-\xi_{3}(sx_{2})
		             =-2x_4y_5 - 2x_2y_7.
	\end{align*}

	\item Recall that, for $\alpha=-2$,
	$$f^2(sx_4)=\frac{\alpha-2}{2}x_2x_4y_5+\alpha x_2^2y_7-2x_4y_7= -2x_2x_4y_5-2x_2^2y_7-2x_4y_7=-2x_2x_4y_5-(2x_2^2-2x_4)y_7.$$
	\begin{align*}
		(Df^2)(sx_{4})&=d(f^2(sx_{4}))-f^2(dsx_{4})\\
		&=d(f^2(sx_{4}))-f^2(x_{4})\\
		&=d(-2x_2x_4y_5-(2x_2^2-2x_4)y_7)-2x_4^3\\
		&=-2x_2x_4(x_2^3-2x_2x_4)-(2x_2^2-2x_4)(x_4^2-x_2^2x_4)-2x_4^3\\
		&=0.
	\end{align*}
	Then, $h_{2}^{2}(sx_{4})=D_2(\xi_2)=0$. On the other side,
	\begin{align*}
		(D_{2}\xi_{3})(sx_{4})&=(d_2\circ \xi_3-\xi_3\circ d_2)(sx_4)\\
		&= d_2(\xi_3(sx_4))-\xi_3(d_2(sx_4))\\
		&=d_2(\xi_3(sx_4))-\xi_3(x_4)= d_2(\xi_3(sx_4)).
	\end{align*}
	Then $d_2(\xi_3(sx_4))=0$. Now, $\xi_3(sx_4)\in (\Lambda ^4V)^{11}$. Then, $\xi_3(sx_4)=mx_2^3y_5$ with $d(mx_2^3y_5)=0$ i. e. $m=0$. It follows that $\xi_3(sx_4)=0$
	
	Thus for $f^3 = f^2 -\xi_3$, we get
	\begin{align*}
		f^{3}(x_{4})&=f^2(x_{4})-\xi_{3}(x_{4})=f^2(x_{4})=2x_4^3\\
		f^{3}(sx_{4})&=f^2(sx_{4})-\xi_{3}(sx_{4}) = f^2(sx_{4})\\
		             &=-2x_2x_4y_5  -(2x_2^2 -2x_4)y_7.	            
	\end{align*}
	 
	\item \begin{align*}
		(Df^2)(sy_{5})&=d(f^2(sy_{5}))-f^2(d(sy_{5}))\\
		&=d(-2y_5y_7 - cx_4^3 )-f^2(y_5- sd(y_5))\\ 
		&=-2(x_{2}^{3}-2x_{2}x_{4})y_7+2(x_{4}^{2}-x_{2}^{2}x_{4})y_5- 2x_4^2y_5+f^2(s(x_2^3-2x_2x_4))\\
		%&=-2(x_{2}^{3}-2x_{2}x_{4})y_7+2(x_{4}^{2}-x_{2}^{2}x_{4})y_5- 2x_4^2y_5+x_2^2(-2x_4y_5 - 2x_2y_7)-2x_2(-2x_2x_4y_5  -(2x_2^2 -2x_4)y_7)\\
		&=0.
	\end{align*}
	
	Then, $h_{2}^{2}(sy_{5})=0.$
	Once again,  
	\begin{align*}
		h_2^2(sy_5)&=(D_{2}\xi_{3})(sy_{5})=(d_2\circ \xi_3-\xi_3\circ d_2)(sy_{5})\\
		&=d_2(\xi_3(sy_5))-\xi_3(d_2(sy_5))\\
		&= d_2(\xi_3(sy_5))-\xi_3(y_5)+\xi_3(s(-2x_2x_4))\\
		&= d_2(\xi_3(sy_5))-2x_2\xi_3(sx_4) = d_2(\xi_3(sy_5))\\
	\end{align*}
	then $d_2(\xi_3(sy_5))=0$, so , since $\xi_3(sy_5)\in (\Lambda ^4V)^{12}$, we obtain 	
	$\xi_3(sy_5)=0$.
	
	Thus
	\begin{align*}
		f^3(y_5)&=f^2(y_5)-\xi_3(y_5)=f^2(y_5)=2x_4^2y_5,\\ f^{3}(sy_{5})&=f^2(sy_5)-\xi_3(sy_5)=f^2(sy_5)=-2y_5y_7-cx_4^3.
	\end{align*}

	\item Recall again that, for $\alpha = -2$, $f^2(sx_4) = -2x_2x_4y_5  -(2x_2^2 -2x_4)y_7$ and $f^{2}(sy_{7}) = \frac{\alpha -2}{2}x_2y_5y_7=-2x_2y_5y_7$.
	
	\begin{align*}
		(Df^2)(sy_{7})&=d(f^2(sy_{7}))-f^2(dsy_7)\\
		&=d(f^2(sy_{7}))-f^2(y_7- sd(y_7))=d(-2x_2y_5y_7)-2x_4^2y_7+f^2(sd(y_7))\\
		&=-2x_2(x_2^3-2x_2x_4)y_7 +2x_2(x_4^2-x_2^2 x_4)y_5-2x_4^2y_7+f^2(s(x_4^2-x_2^2x_4))\\
		&=-2x_2(x_2^3-2x_2x_4)y_7 +2x_2(x_4^2-x_2^2 x_4)y_5-2x_4^2y_7+(x_4-x_2^2)f^2(sx_4)\\
		%&=-2x_2(x_2^3-2x_2x_4)y_7 +2x_2(x_4^2-x_2^2 x_4)y_5-2x_4^2y_7+(x_4-x_2^2)(-2x_2x_4y_5-(2x_2^2 -2x_4)y_7)\\
		&= 0
	\end{align*}
	
	so that
	$h_{2}^{2}(sy_{7})=0$. Now, 
	\begin{align*}
		h_{2}^{2}(sy_{7})&=(D_{2}\xi_{3})(sy_{7})= (d_2\circ \xi_3 - \xi_3 \circ d_2)(sy_{7})\\
		&=d_2(\xi_3(sy_7))-\xi_3(d_2(sy_7))\\
		&=d_2(\xi_3(sy_7))-\xi_3(y_7)+\xi_3(s(d_2y_7)))\\
		&=d_2(\xi_3(sy_7))+\xi_3(sx_4^2)\\
		&=d_2(\xi_3(sy_7))+x_4\xi_3(sx_4)\\
		&=d_2(\xi_3(sy_7)), 
	\end{align*}
	then $d_2(\xi_3(sy_7))=0$ and consequently, since $\xi_3(sy_7) \in (\Lambda ^4V)^{14}$, $\xi_3(sy_7))=0$.
	It follows that,
	\begin{align*}
		f^3(y_7)&=f^2(y_7)-\xi_3(y_7)=2x_4^2y_7,\\ 
		f^{3}(sy_{7})&=f^{2}(sy_{7})-\xi_{3}(sy_{7})=-2x_2y_5y_7.
	\end{align*} 	
\end{itemize}

\begin{align*}
	f^{3}(x_{2})&=f^2(x_{2})=2x_2x_4^2 \\
	f^{3}(x_{4})&=f^2(x_{4})= 2x_4^3\\
	f^3(y_5)&=f^2(y_5)=2x_4^2y_5,\\
	f^3(y_7)&=f^2(y_7)=2x_4^2y_7,\\ 
	f^{3}(sx_{2})&=f^2(sx_{2})-\xi_{3}(sx_{2})=-2x_4y_5 - 2x_2y_7,\\
	f^{3}(sx_{4})&=f^2(sx_{4})-\xi_{3}(sx_{4})= -2x_2x_4y_5  -(2x_2^2 -2x_4)y_7,\\
	f^{3}(sy_{5})&=f^2(sy_5)-\xi_3(sy_5)=-2y_5y_7-cx_4^3,\\
	f^{3}(sy_{7})&=f^2(sy_{7})-\xi_3(sy_7)=-2x_2y_5y_7.
\end{align*}

Next, we calculate $Df^3$ on $\Lambda sV$.
\begin{itemize}
\item 
\begin{align*} 
Df^3(sx_2) &= d(f^3(sx_2)) - f^3(x_2)+ f^3(s(dx_2)) \\
           &= d(-2x_4y_5 - 2x_2y_7) -2x_2x_4^2\\
           &= -2x_4(x_2^3-2x_2x_4) - 2x_2(x_4^2-x_2^2x_4) -2x_2x_4^2\\
           &= 0
\end{align*}
\item 
\begin{align*} 
Df^3(sx_4) &= d(f^3(sx_4)) - f^3(x_4)+ f^3(s(dx_4)) \\
           &= d(-2x_2x_4y_5  -(2x_2^2 -2x_4)y_7) -2x_4^3\\
           &= -2x_2x_4(x_2^3-2x_2x_4) - (2x_2^2 -2x_4)(x_4^2-x_2^2x_4) -2x_4^3\\
           &= 0
\end{align*}
\item  
\begin{align*} 
Df^3(sy_5) &= d(f^3(sy_5)) - f^3(y_5)+ f^3(s(dy_5)) \\
           &= d(-2y_5y_7-cx_4^3)-2x_4^2y_5+f^3(s(x_2^3-2x_2x_4))\\
           &= -2(x_2^3-2x_2x_4)y_7 +2(x_4^2-x_2^2x_4)y_5 -2x_4^2y_5 + x_2^2f^3(sx_2) -2x_2f^3(sx_4)\\
           %&= -2(x_2^3-2x_2x_4)y_7 -2x_2^2x_4y_5 +x_2^2(-2x_4y_5 - 2x_2y_7) -2x_2( -2x_2x_4y_5  -(2x_2^2 -2x_4)y_7)\\
           &= 0
\end{align*}
\item 
\begin{align*} 
Df^3(sy_7) &= d(f^3(sy_7)) - f^3(y_7)+ f^3(s(dy_7)) \\
           &= d(-2x_2y_5y_7) -2x_4^2y_7 + f^3(s(x_4^2-x_2^2x_4))\\
           &= -2x_2(x_2^3-2x_2x_4)y_7 +2x_2(x_4^2-x_2^2x_4)y_5 -2x_4^2y_7 + (x_4-x_2^2)f^3(sx_4)\\
           %&=  -2x_2(x_2^3-2x_2x_4)y_7 +2x_2(x_4^2-x_2^2x_4)y_5 -2x_4^2y_7 + (x_4-x_2^2)(-2x_2x_4y_5  -(2x_2^2 -2x_4)y_7)\\
           &= 0
\end{align*}
\end{itemize}
 
 Finally, we easily check that $f^3$ is not a $D$-coboundary. Thus it is the generating class of $Ext_{(\Lambda V,d)}(\mathbb{Q},(\Lambda V,d))$.
 Hence, $t= 3$ so that $L_0 = t-1 =2$. Moreover, in $(\Lambda V,d)$, we have $[f^t(1)] = [f(1)] = [x_4^2] = [x_2^4]$,  hence, by definition of $e_0(\Lambda V,d)$, we in fact obtain: 
 $$e_0(\Lambda V,d) = e_0([f^3(1)]) = e_0([f(1)]) = e_0([x_2^4]) = 4.$$ 
 
\end{example}

  \section{A basis of $Ext_{(\Lambda V,d)}(\mathbb{K},(\Lambda V,d))$ from that of $Ext_{(\Lambda V,d_{\sigma})}(\mathbb{K};(\Lambda V,d_{\sigma}))$}
  Let $(\Lambda V,d)$ be a minimal Sullivan model with $V$ finite-dimensional. In \cite{FHTb, H77}, denoting $Q= V^{\text{even}}$ and $P = V^{\text{odd}}$, S. Halperin defines a bi-grading $(\Lambda V)^{q+n, -q} = (\Lambda Q \otimes \Lambda^q P)^n$ on $\Lambda V$  with a new bi-grading. Here $n$ is the total degree and $p=n+q$ is the filtering degree. He defines on it the filtration
   $F^{p}(\Lambda V) = (\Lambda V)^{\geq p, *}$
   and shows that it yields a convergent cohomology spectral sequence whose  $E_0$ term is a Sullivan (minimal) model $(\Lambda V,d_{\sigma})$ with the same generating vector space $V$ and a modified differential denoted $d_{\sigma}$ satisfying
  
  \begin{equation}\label{con-pure}
  dv=d_{\sigma}(v)+\phi(v),\; \phi(v) \in \Lambda^{+} V^{\text{odd}}\otimes \Lambda V^{\text{even}}.
  \end{equation}
   More explicitly,
  $d_{\sigma}$ is the part of the differential $d$ satisfying : $d_{\sigma}(V^{\text{even}})=0$ and $d_{\sigma}(V^{\text{odd}})\subseteq \Lambda V^{\text{even}}$. This model is called the \emph{pure model associated to $(\Lambda V,d)$} and the aforementioned spectral sequence:

   \begin{equation}\label{first odd spectral sequence}
   E_2^{p,-q} = H^{p,-q} (\Lambda V,d_{\sigma}) \implies H^{p-q}(\Lambda V,d)
   \end{equation}
   is called the \emph{odd spectral sequence} of $(\Lambda V,d)$.

  The main result, derived from (\ref{first odd spectral sequence}) states that $(\Lambda V,d)$ is elliptic if and only if $(\Lambda V,d_{\sigma})$ is elliptic.

   Thus, as noted in \S 2, a Sullivan model with finite generating space $V$ and differential satisfying the condition \eqref{con-pure} is called a \emph{pure Sullivan (minimal) model}.

  We now consider the acyclic closure $(\Lambda V\otimes \Lambda sV, d)\stackrel{\simeq}{\rightarrow} (\mathbb{Q},0)$. Recall that the differential $D$ on $A=Hom_{\Lambda V}(\Lambda V\otimes\Lambda sV;\Lambda V)$ is given by:
  $$D(f)= f\circ d_{\Lambda V \otimes \Lambda sV} + (-1)^{|f|+1}d_{\Lambda V}\circ f$$
  for any $f\in A$.
  
  Similarly, the differential $d_{\sigma}$ induces  $D_{\sigma}$  on $A$ defined by $$D_{\sigma}(f)= f\circ {d_{\sigma}}_{\Lambda V \otimes \Lambda sV} + (-1)^{|f|+1}{d_{\sigma}}_{\Lambda V}\circ f.$$ Therefore, we obtain the relation  $D(f)=D_{\sigma}(f)+\varphi(f),\; \forall f\in A$ with
$\varphi(f) = (D-D_{\sigma})(f)$.

  In \cite{R1}, denoting
 $$(\Lambda V\otimes\Lambda sV)^{r,s}=\bigoplus_{r=a_{1}+a_2+2a_{3}}(\Lambda Q\otimes \Lambda^{a_{1}}sQ \otimes \Lambda ^{a_2}P\otimes  \Lambda^{a_{3}}sP)^{s},$$
 the second author introduced on
 $A^n=Hom_{\Lambda V}^n(\Lambda V\otimes\Lambda sV,\Lambda V)$ ($n\geq 0$) the following filtration:
\begin{equation}\label{second filtration}
 F^{p}A^{n}=\bigoplus_{r,s}Hom_{\Lambda V}((\Lambda V\otimes\Lambda sV)^{r,s},(F^{p+n+s+r}(\Lambda V))^{n+s}).
\end{equation}
That is, $f\in F^{p}A^{n}$ if and only if $f((\Lambda V\otimes\Lambda sV)^{r,s}) \subseteq (\Lambda ^{\geq p+n+s+r, *}V)^{n+s}$. Note that the differential on $(\Lambda V\otimes\Lambda sV)^{r,s}$ preserves both $s$ (total degree) and $r$ (filtering degree).
Thus, he obtains the following convergent spectral sequence, called the \emph{$Ext$-odd spectral sequence} of $(\Lambda V,d)$.
  \begin{equation}\label{second spectral sequence}
  Ext^{p,-q}_{(\Lambda V,d_{\sigma})}(\mathbb{K},(\Lambda V,d_{\sigma}))\implies Ext^{p-q}_{(\Lambda V,d)}(\mathbb{K},(\Lambda V,d)).
  \end{equation}

Since $V$ is finite-dimensional, both $(\Lambda V,d)$ and $(\Lambda V,d_{\sigma})$ are Gorenstein dga's with the same formal dimension $N$. Consequently, there exists a unique bi-degree $(p,q)$ such that
$Ext_{(\Lambda V,d_{\sigma})}^{*,*}(\mathbb{K},(\Lambda V,d_{\sigma}))= Ext^{(p,-q)}_{(\Lambda V,d_{\sigma})}(\mathbb{K},(\Lambda V,d_{\sigma}))$ is one-dimensional. Recall also that $A:= Hom_{\Lambda V}(\Lambda V\otimes\Lambda sV,\Lambda V)$

Following the approach for the spectral sequence \eqref{Ext Milnor Moore spec}, we proceed to determine a generating class $[h]$ of $Ext^{p-q}_{(\Lambda V,d)}(\mathbb{K},(\Lambda V,d))$ from that, say $[f]$, of $Ext^{p,-q}_{(\Lambda V,d_{\sigma})}(\mathbb{K},(\Lambda V,d_{\sigma}))$.
Using notation of \S 1.4, we have
 $f\in Z_{1}^{p,-q}$ and $|f|=N= p-q$, that is,
 $f(\Lambda V\otimes \Lambda sV)^{r,s}\in  F^{p+N+r+s}(\Lambda V)$ and
 $Df(\Lambda V\otimes \Lambda sV)^{r,s}\subset F^{p+N+r+s+1}(\Lambda V).$
\begin{itemize}
	\item 
First, assume $Df=0$. If, moreover, $f = Dg$, for some $g\in A^{N-1}$, then $f-D_{\sigma}(g) = (D-D_{\sigma})(g)$ is a $D_{\sigma}$-cocycle. Now, as $|g|=N-1$ and $d_{| {\Lambda V \otimes \Lambda sV}}$ sends $(\Lambda V\otimes \Lambda sV)^{r,s}$ to $(\Lambda V\otimes \Lambda sV)^{r-1,s+1}\oplus (\Lambda V\otimes \Lambda sV)^{\geq r,s+1}$, $g$ should send $(\Lambda V\otimes \Lambda sV)^{r-1,s+1}$ to $F^{p+N+r+s}(\Lambda V)=F^{(p+1)+(N-1)+(r-1)+(s+1)}(\Lambda V)$.
 This yields the following diagram :
\begin{equation}\label{diagram1}
  \begin{array}{ccc}
    (\Lambda V\otimes \Lambda sV)^{r,s} & \stackrel{{d_{\sigma}}_{|{\Lambda V \otimes \Lambda sV}}}{\longrightarrow} & (\Lambda V\otimes \Lambda sV)^{r-1,s+1} \\
    \downarrow g &   & g \downarrow \\
    F^{(p+1)+(N-1)+r+s}(\Lambda V )  & \stackrel{{d_{\sigma}}_{|{\Lambda V}}}{\longrightarrow} & F^{p+N+r+s}(\Lambda V)
  \end{array}
\end{equation}
Therefore, $D_{\sigma}(g) = g\circ {d_{\sigma}}_{|{\Lambda V \otimes \Lambda sV}}  + (-1)^N {d_{\sigma}}_{|{\Lambda V}}\circ g$ sends  $(\Lambda V\otimes \Lambda sV)^{r,s}$ to $F^{p+N+r+s}(\Lambda V)$.
The same analysis give us the following diagram:
\begin{equation}\label{diagram2}
  \begin{array}{ccc}
    (\Lambda V\otimes \Lambda sV)^{r,s} & \stackrel{{(d-d_{\sigma})}_{|{\Lambda V \otimes \Lambda sV}}}{\longrightarrow} & (\Lambda V\otimes \Lambda sV)^{\geq r,s+1} \\
    \downarrow g &   & g \downarrow \\
    F^{(p+1)+(N-1)+r+s}(\Lambda V)  & \stackrel{{(d-d_{\sigma})}_{|{\Lambda V}}}{\longrightarrow} & F^{>p+N+r+s}(\Lambda V).
  \end{array}
\end{equation}
It follows that $f-D_{\sigma}(g) = (D-D_{\sigma})(g)$ sends $(\Lambda V\otimes \Lambda sV)^{r,s}$ to $F^{(p+1)+N+r+s}(\Lambda V)$.
So, if $f-D_{\sigma}(g)$ is not a $D_{\sigma}$-coboundary, then $[f-D_{\sigma}(g)]$  becomes a generating class of $ Ext^{*,*}_{(\Lambda V,d_{\sigma})}(\mathbb{K},(\Lambda V,d_{\sigma}))$ of bi-degree $(p+1,q-1)$, which contradicts uniqueness of $(p,q)$. We then conclude that, if $Df=0$ then $[f]$ is a generating class of  $ Ext^{*,*}_{(\Lambda V,d)}(\mathbb{K},(\Lambda V,d))$.

\item Second, assume that $Df\neq 0$. Following the same procedure as in the previous section, we may decompose $Df_{|(\Lambda V\otimes \Lambda sV)^{r,s}}$ as:

\begin{center}
  $Df_{|(\Lambda V\otimes \Lambda sV)^{r,s}} = g_{0}^{0} + \cdots + g_{l}^{0}$
\end{center}
(with, once again, the convention that $l$ is the greatest integer such that $g_{l}^{0}\neq 0$). Here
$l$ is some fixed integer and $g_{i}^{0}$ the $i$-th component of $Df$ sending $(\Lambda V\otimes \Lambda sV)^{r,s}$ to   $(\Lambda V )^{p+N+s+r+i+1,*}$.
Indeed, let  $m= max\{|v_i|,\; v_i\in V\}$ (referring again to \cite[Corollary 1, p. 441]{FHTb} $2\leq m\leq 2N-1$). Thus, since $f$ is a $\Lambda V$-module and,   $g_l^0(sv)\in (\Lambda V)^{p+N+s+r+l+1,*}$ then, for all $sv\in sV$,  $|g_l^0(sv)| = |g_l^0| + |sv| = N +1 + |sv|\Longrightarrow N+m \geq 2( p+N+s+r+l+1)$. Thus, $l\leq \frac{-N+m}{2}-(r+s+p+1)$.  

This decomposition extends to $Df$ so that:
\begin{equation}
  Df=h_{0}^{0}+\cdots+h_{l}^{0}.
\end{equation}
As in the above section, this is equivalent to :
\begin{equation}\label{Df(sigma)}
 Df((\Lambda V\otimes\Lambda sV)^{r,s})\subseteq (\Lambda V)^{p+N+s+r+1,*} \oplus\cdots\oplus (\Lambda V)^{p+N+s+r+l+1,*}.
 \end{equation}
 
Now, using the fact $D^{2}f=0$ and by considering word length, we get  $D_{\sigma}(h_{0}^{0})=0$. Moreover, as $|h_0^0| = |Df| = N+1$, we have $[h_0^0] =0$ hence, $h_{0}^{0}=D_{\sigma}\beta_{1}$ for some $\beta_{1} \in A^N$.

 To continue, we consider  $f^{1}=f-\beta_{1}$. 
 Since, $|\beta_1|=N$, $h_{0}^{0}=D_{\sigma}\beta_{1}$ and  $h_0^0(\Lambda V\otimes \Lambda sV)^{r,s}\subseteq  F^{p+N+r+s+1}(\Lambda V )$ we have $\beta_1(\Lambda V\otimes \Lambda sV)^{r,s}\subseteq  F^{p+N+r+s+1}(\Lambda V )$ and $(D-D_{\sigma})(\beta_1)(\Lambda V\otimes \Lambda sV)^{r,s}\subseteq  F^{p+N+r+s+2}(\Lambda V )$  (compare   with $g$ and $(D-D_{\sigma})(g)$ in the diagram (\ref{diagram1}) and (\ref{diagram2}) respectively). It follows that $f^{1}(\Lambda V\otimes \Lambda sV)^{r,s}\subseteq F^{p+N+r+s+1}(\Lambda V )$. Afterwards, since 
 $$Df_1 =Df - D\beta_1 = (D-D_{\sigma})(f) - h_0^0 - (D-D_{\sigma})(\beta_1) = h_1^0 + \cdots + h_l^0 -(D-D_{\sigma})(\beta_1)$$ we deduce that $Df^{1}(\Lambda V\otimes \Lambda sV)^{r,s}\subseteq F^{p+N+r+s+2}(\Lambda V)$.

 Next, expressing  $F^{p+N+r+s+2}(\Lambda V ) = (\Lambda V) ^{\geq p+N+r+s+2, *}$ as the sum:
 $$F^{p+N+r+s+2} =  (\Lambda V) ^{p+N+r+s+2, *} \oplus (\Lambda V) ^{p+N+r+s+3, *} \oplus \cdots \oplus (\Lambda V) ^{ p+N+r+s+l+1, *}(\Lambda V )$$
 we see that $(D-D_{\sigma})(\beta_1)$ may be decomposed as follows:
 $$(D-D_{\sigma})(\beta_1) = \beta_1^{(1)} + \beta_1^{(2)} + \cdots + \beta_1^{(l)}$$
 where $\beta_1^{(i)}$, ($1\leq i \leq l$), is the component of $(D-D_{\sigma})(\beta_1)$ which sends $(\Lambda V\otimes \Lambda sV)^{r,s}$ to $(\Lambda V)^{p+N+r+s+i+1, *}$.
 Therefore, we may write:
 $$Df^1 = h_1^1  + h_2^1 = \cdots + h_l^1$$
 with $h_1^1 = h_1^0 - \beta_1^{(1)}$.
 We then continue by analyzing the behaviors of $f^1$.

 Again, using the equation $D^2f^1=0$, we obtain some $\beta_2\in A^{N}$ such that $h_1^1 = D_{\sigma}\beta_2$. This leads to $f^2 = f^1-\beta_2 = f -\beta_1 - \beta_2$.  Similarly, we may show that $\beta_2(\Lambda V\otimes \Lambda sV)^{r,s}\subseteq  F^{p+N+r+s+2}(\Lambda V )$ and  $(D-D_{\sigma})(\beta_2)(\Lambda V\otimes \Lambda sV)^{r,s}\subseteq  F^{p+N+r+s+3}(\Lambda V )$, so that  $(D-D_{\sigma})(\beta_2)$ can be expressed as follows:
$$(D-D_{\sigma})(\beta_2) = \beta_2^{(2)} + \beta_2^{(3)} + \cdots + \beta_2^{(l)}.$$
where $\beta_2^{(i)}$, ($2\leq i \leq l$) is the component of $(D-D_{\sigma})(\beta_2)$ which sends $(\Lambda V\otimes \Lambda sV)^{r,s}$ to $(\Lambda V) ^{p+N+r+s+i+1, *}$.

Continuing this process, we inductively define a sequence  $(f^{j}, \beta_{j})\in A^N \times A^N$ for $1\leq j \leq l$ with $f^0 := f$:
$$
\left \{
\begin{array}{l}
f^{j}=f^{j-1}-\beta_{j}= f -\beta_1 -\beta_2 - \cdots - \beta_j\\
Df^{j}(\Lambda V\otimes \Lambda sV)^{r,s}\subseteq F^{p+r+s+N+j+1,*}\\
Df^j =  h_j^j  + h_{j+1}^j = \cdots + h_l^j
\end{array}
\right. .
$$

It remains to show that  $f^{l} = f -\beta_1 -\beta_2 - \cdots - \beta_l$ is indeed a cocycle representing the generating class of $Ext^{p-q}_{(\Lambda V,d)}(\mathbb{K},(\Lambda V,d)).$ 

 If $Df^l \neq 0$, we proceed using the same procedure as in the previous section to obtain $Df^l =h_l^l = D_\sigma\beta_{l+1}$ (for some $\beta_{l+1}\in A^N$). Additionally, based on the definition of $l$, we find that $(D-D_{\sigma})(\beta_{l+1})=0$. Consequently, $Df^l =h_l^l = D\beta_{l+1}$ which implies $D(f^l -\beta_{l+1})=0$. Subsequently, we may substitute $f^l -\beta_{l+1}$ for $f^l$.

Next, we assume for simplicity that $Df^l= 0$ and  show that $f^l$ is not a coboundary. 

Assuming the contrary i.e. there is some $g\in A^{N-1}$ such that $f^l =Dg$.

On one hand,  using (\ref{Df(sigma)}) we see that 
$$f((\Lambda V\otimes\Lambda sV)^{r,s})\subseteq (\Lambda V)^{p+N+s+r,*} \oplus \cdots \oplus (\Lambda V)^{p+N+s+r+l,*}.$$
Therefore, 
$f$ decomposes as $$f =f^0_0 + f_1^0 + \cdots + f_l^0$$
where $f_i^0(\Lambda V\otimes\Lambda sV)^{r,s})\subseteq (\Lambda V)^{p+N+s+r+i,*}$.

On the other  hand,  since all terms of the spectral sequence (\ref{Ext-odd spec}) are
one-dimensional (as graded vector spaces) we still have $$f^l((\Lambda V\otimes\Lambda sV)^{r,s})\subseteq (\Lambda V)^{p+N+s+r,*} \oplus \cdots \oplus (\Lambda V)^{p+N+s+r+l,*}.$$
Thus, we should have
$$g((\Lambda V\otimes\Lambda sV)^{r,s})\subseteq (\Lambda V)^{p+N+s+r-1,*} \oplus \cdots \oplus (\Lambda V)^{p+N+s+r+l-1,*}$$
so $g$ also decomposes as  $$g =g^0_0 + g_1^0 + \cdots + g_l^0$$ 
with $g_i^0((\Lambda V\otimes\Lambda sV)^{r,s})\subseteq (\Lambda V)^{p+N+s+r+i-1,*}$.

It results, due to degree constraints, that 
$f_0^0 - \beta_1 = D_\sigma (g_0^0)$. Hence, applying $D_{\sigma}$ we get
$D_\sigma \beta_1   = 0$. That is $h_0^0 =0$.
This contradicts (\ref{Df(sigma)}) or equivalently the fact that $f\in F^p(A^N)$ and concludes the algorithm.
\end{itemize}

\begin{remark}
A similar  description given in Remark \ref{rem4.2} may be used in terms of the following system of equations to determine $f^l$.
% hence the fundamental class $f^l(1)$ of $(\Lambda V,d)$ from that of $(\Lambda V,d_{\sigma})$:
$$
\left \{
\begin{array}{l}
h_0^0 = D_{\sigma}\beta_1\\
h_1^1 = h_1^0 - \beta_1^{(1)}\\
h_2^2 = h_2^0 - \beta_1^{(2)} - \beta_2^{(2)}\\
\vdots\\
h_l^l = h_l^0 - \beta_1^{(l)} - \beta_2^{(l)} - \cdots - \beta_l^{(l)}
\end{array}
\right. 
$$

Alternatively, Lechuga-Murillo’s algorithm \cite{Mu, L-M02} can be applied to determine the $\beta_i(1)$, starting with $h_0^0(1)$ instead of $h_1^0(1)$.

 Here again, from $f^l = f - \beta_1 - \beta_{2} - \cdots - \beta_l$ we obtain $f^l(1) = f(1) - \beta_1(1) - \beta_{2}(1) - \cdots - \beta_l(1)$ which indeed give us the fundamental class $[f^l(1)] = ev_{(\Lambda V,d)}([f^l(1)])$ of $(\Lambda V,d)$ from that of $(\Lambda V,d_{\sigma})$.

\end{remark}

\begin{theorem} With the notation above, 
  $[f^{l}]$ is the generating class of $Ext_{(\Lambda V,d)}(\mathbb{K},(\Lambda V,d))$.
\end{theorem}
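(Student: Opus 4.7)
The plan is to leverage that both $(\Lambda V,d)$ and $(\Lambda V,d_{\sigma})$ are Gorenstein dga's with the same formal dimension $N$, so $Ext_{(\Lambda V,d)}(\mathbb{K},(\Lambda V,d))$ is one-dimensional and, via the convergent $Ext$-odd spectral sequence \eqref{second spectral sequence}, concentrated in the unique bi-degree $(p,-q)$ already carrying $Ext^{p,-q}_{(\Lambda V,d_{\sigma})}(\mathbb{K},(\Lambda V,d_{\sigma})) = \mathbb{K}[f]$. It suffices to exhibit a $D$-cocycle in $F^{p}A^{N}$ whose class in $Ext^{p-q}_{(\Lambda V,d)}(\mathbb{K},(\Lambda V,d))$ is non-trivial, and to identify that class with $[f^{l}]$.

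First I would verify that the inductive construction of the correction terms $\beta_{j}$ terminates in at most $l+1$ steps. The key point at stage $j$ is that $D^{2}f^{j-1}=0$ combined with the filtration analysis in \eqref{second filtration} forces the leading component $h_{j-1}^{j-1}$ of $Df^{j-1}$ to be a $D_{\sigma}$-cocycle of total degree $N+1$. Because $(\Lambda V,d_{\sigma})$ is Gorenstein of formal dimension $N$, any $D_{\sigma}$-cocycle of degree $>N$ is a $D_{\sigma}$-coboundary; hence there exists $\beta_{j}\in A^{N}$ with $D_{\sigma}\beta_{j}=h_{j-1}^{j-1}$, and the diagrams \eqref{diagram1}–\eqref{diagram2} confirm that $\beta_{j}$ and $(D-D_{\sigma})\beta_{j}$ live in the expected filtration strata. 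By the very definition of $l$, the process halts at $f^{l}$: if $Df^{l}=h_{l}^{l}\neq 0$ we absorb a last correction $\beta_{l+1}$ for which $(D-D_{\sigma})\beta_{l+1}=0$ by the same degree bound, so that $Df^{l}=D\beta_{l+1}$ and we replace $f^{l}$ by $f^{l}-\beta_{l+1}$. In either case $Df^{l}=0$.

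The main obstacle is showing that $f^{l}$ is not a $D$-coboundary. Suppose for contradiction that $f^{l}=Dg$ for some $g\in A^{N-1}$. Since every page of \eqref{second spectral sequence} is one-dimensional, the images $f$, $f^{l}$, $g$ respect the $(r,s)$-bigrading: write $f=f_{0}^{0}+f_{1}^{0}+\cdots+f_{l}^{0}$ and $g=g_{0}^{0}+g_{1}^{0}+\cdots+g_{l}^{0}$ with $f_{i}^{0}$ and $g_{i}^{0}$ landing in the strata dictated by \eqref{Df(sigma)} and by the shift induced by $D$. Matching the lowest filtration component in the identity $f^{l}=f-\beta_{1}-\cdots-\beta_{l}=Dg$, only the $D_{\sigma}$ part of $D$ contributes to that bottom stratum, yielding $f_{0}^{0}-\beta_{1}=D_{\sigma}g_{0}^{0}$. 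Applying $D_{\sigma}$ and using $D_{\sigma}^{2}=0$ together with $D_{\sigma}f_{0}^{0}=0$ (by word length) gives $D_{\sigma}\beta_{1}=0$, i.e.\ $h_{0}^{0}=0$. This contradicts the fact that $f$ lies in $Z_{1}^{p,-q}$ and represents the unique generator of $Ext^{p,-q}_{(\Lambda V,d_{\sigma})}(\mathbb{K},(\Lambda V,d_{\sigma}))$, for $h_{0}^{0}=0$ would push $[f]$ into the strictly finer filtration $F^{p+1}$ and destroy the uniqueness of the bi-degree $(p,-q)$.

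Combining the two steps, $f^{l}$ is a $D$-cocycle in $F^{p}A^{N}$ which is not a $D$-coboundary, so $[f^{l}]\neq 0$ in $Ext^{p-q}_{(\Lambda V,d)}(\mathbb{K},(\Lambda V,d))$. Because this latter space is one-dimensional by the Gorenstein property, $[f^{l}]$ is forced to be its generator, as claimed.
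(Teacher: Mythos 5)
Your proposal is correct and follows essentially the same route as the paper: the same inductive construction of the corrections $\beta_j$ (with existence guaranteed by the Gorenstein concentration of $Ext_{(\Lambda V,d_\sigma)}(\mathbb{K},(\Lambda V,d_\sigma))$ in degree $N$), the same terminal adjustment by $\beta_{l+1}$ using $(D-D_\sigma)\beta_{l+1}=0$, and the same non-coboundary argument that matches the lowest filtration stratum of $f^l=Dg$ to force $h_0^0=D_\sigma\beta_1=0$, contradicting $f\in Z_1^{p,-q}$. No substantive difference from the paper's proof.
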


We now prove {\bf Theorem \ref{th1.1}} :
\begin{proof} (Theorem \ref{th1.1})
%It is interesting to mention that if  $(\Lambda V,d_{k})$ hence $(\Lambda V,(d_{k})_{\sigma})$ is not elliptic, then  $(f_{k,\sigma}^t)^l(1)$ is not necessarily the representative that gives the exact value of $e_0(X)$. Indeed, in such a case, 
%we start our algorithm  with a
Let $f_{k,\sigma}(1)$  be the  representative of the top class of $(\Lambda V,(d_{\sigma})_{k})$, from which we start our algorithm. This is indeed a  coboundary since
  %$[f_{k,\sigma}]$, from which we start our algorithm, has a bidgree of the firt term of \eqref{Ext Milnor Moore spec} while 
    $[f_{k,\sigma}(1)] = ev_{(\Lambda V,(d_{k})_{\sigma})}([f_{k,\sigma}])=0$ \cite[Corollary 3]{L-M1}. 
   % is  of $H^N(\Lambda V,(d_{\sigma})_{k})= E_k^{*,*}$, the first term of \eqref{Milnor Moore spec},  of $(\Lambda V,(d_{\sigma})_{k})$, that is, we start with the of a particular class $[f_{k,\sigma}(1)]$  in $H^N(\Lambda V,(d_{\sigma})_{k})$  whose representative is  produce that of 
   %that is $f_{k,\sigma}(1)$ is a coboundary in $(\Lambda V,(d_{k})_{\sigma})$.
    But,  as $(\Lambda V,d_{\sigma})$ is itself elliptic,  $ev_{(\Lambda V,d_{\sigma})}([f_{k,\sigma}^t]) = [f_{k,\sigma}^t(1)]\neq0$, thus,  $(f_{k,\sigma}^t)(1)$ is  not a coboundary.  Clearly, 
    the same holds for $(f_{k,\sigma}^t)^l(1)$ by the convergence of the spectral sequence (\ref{odd spec}). Thus, $[(f_{k,\sigma}^t)^l(1)]$ is the fundamental class of $(\Lambda V,d)$ and
    Consequently, $e_0(\Lambda V,d) = e_0([(f_{k,\sigma}^t)^l(1)])$.
     % It results from the algorithm  that, if $(p,q)$ designates the bi-degree  of $f_{k,\sigma}$, then $(f_{k,\sigma}^t)(1)\in \Lambda ^{\geq p}V$ and also $(f_{k,\sigma}^t)^l(1)\in \Lambda ^{\geq p}V$. Therefore,  referring to \cite[Theorem 2]{B-R}, the fundamental class $\omega \in H^N(\Lambda V,d)$ that results from some class $\omega_{0} \in H^N(\Lambda V,(d_{\sigma})_{k})$  should be   represented by   $\alpha \in \Lambda^{\geq p'}V$ with $p'\geq p$.  Thus $e_0(\Lambda V,d) =p'\geq p$.
\end{proof}

\begin{example}
	Consider the Sullivan model $(\Lambda(a,b,x,u,v,w), d)$ with degrees $|a|=2$, |b|=4$, |x|=|u|=3$, $|v|=5$ and $|w|=7$. The differentials are defined as $da=dx=0$, $du=a^{2}$, $db=ax$, $dv=ab-ux$, and $dw=b^{2}-2vx$. The associated pure differential is expressed as $d_{\sigma}a=d_{\sigma}b=d_{\sigma}x=0$, $d_{\sigma}u=a^{2}$, $d_{\sigma}v=ab$ and $d_{\sigma}w=b^{2}$. Note that $d = d_{2}$ and $d_{\sigma}=d_{\sigma,2}$. The fundamental class of $(\Lambda V,d_{\sigma})$ is represented by $\omega_{0}=abxv-b^{2}xu$, and the formal dimension $N$ is given by $fd(\Lambda V,d_{ \sigma})=fd(\Lambda V,d)=|\omega_{0}|=14$. The generating class of $Ext_{(\Lambda V,d_{\sigma})}(\mathbb{Q},(\Lambda V,d_{\sigma}))$ is defined by a non-coboundary $\Lambda V$-morphism $f:\Lambda V\otimes(\mathbb{Q}\oplus sV)\rightarrow \Lambda V$ satisfying $f(1)=\omega_{0}=abxv-b^{2}xu$ and $D_{\sigma}(f)=0$ i.e. $d_{\sigma} \circ f = (-1)^{N}f \circ D_{\sigma}$ (here $D_{\sigma}$ is the differential on $\Lambda V\otimes \Lambda sV$).  We now compute $f$. First, we have:
%	\begin{itemize}
	%\item 
	\begin{align*}
	f(a) = a^2bxw -ab^2xu,\\
	f(b) = ab^2xv - b^3xu,\\
	f(x) = 0, (\hbox{since}\; x^2=0),\\
	f(u) = -abxuv,\\
	f(v) = -b^2xuv,\\
	f(w) = abxvw-b^2xuw.
	\end{align*}
%	\end{itemize}
	Next, we determine $f$ on $sV$: 
	\begin{itemize}
		\item 
		%since $D_{\sigma}(a) = d_{\sigma}(a)=0$, we have:
		\begin{align*}
			d_{\sigma}f(sa) & = (-1)^{N}f(a)-(-1)^{N}f(sd_{\sigma}a)\\
			&=f(a)=af(1) =a^{2}bxv-ab^{2}xu\\
			& =bx(a^{2}v-abu) =bx((d_{\sigma}u)v-ud_{\sigma}v) \\
			& =bxd_{\sigma}(uv) =d_{\sigma}(-bxuv)
		\end{align*}
		so, $f(sa)=-bxuv$.
		\item 
		%Idem, since $d_{\sigma}(b)=0$, we have:
		\begin{align*}
			d_{\sigma}f(sb) &= (-1)^{N}f(b)-(-1)^{N}f(sd_{\sigma}b)\\
			&=f(b)=bf(1)= ab^{2}xv-b^{3}xu \in (\Lambda^{5} V)^{18}
		\end{align*}
		so, we need to find $T\in (\Lambda^{4} V)^{17} $ so that $d_{\sigma }T=d_{\sigma}f(sb)$ and we put $f(sb)=T$. However, $T$ should be of the form $\alpha_{1}axvw+\alpha_{2}bxuw$, thus,
		\begin{align*}
			d_{\sigma}f(sb) & =d_{\sigma}(\alpha_{1}axvw+\alpha_{2}bxuw) =\alpha_{1}d_{\sigma}(ax)vw-\alpha_{1}ax d_{\sigma}(vw)+\alpha_{2}d_{\sigma }(bx)uw-\alpha_{2}bxd_{\sigma}(uw)\\
			& =-\alpha_{1}a^{2}bxw+\alpha_{1}ab^{2}xv-\alpha_{2}a^{2}bxw+\alpha_{2}b^{3}xu \\
			&=-(\alpha_{1}+\alpha_{2})a^{2}bxw+\alpha_{1}ab^{2}xv+\alpha_{2}b^{3}xu
		\end{align*}
		so, $\alpha_{1}+\alpha_{2}=0$, $\alpha_{1}=1$, $\alpha_{2}=-1$. Then, $f(sb)=axvw-bxuw$.
		\item  
		%Idem, since $d_{\sigma}(x)=0$, we have:
		\begin{align*}
			d_{\sigma}f(sx) &= (-1)^{N}f(x)-(-1)^{N}f(sd_{\sigma}x)\\
			& =(-1)^{N}f(x)=0 
		\end{align*}
		so, $f(sx)=0$.
		\item
		\begin{align*}
			d_{\sigma}f(su) &= (-1)^{N}f(u)-(-1)^{N}f(sd_{\sigma}u)\\
			&= f(u)-f(s(a^2)) = f(u)-af(sa) \\
			& =-abxuv+abxuv =0
		\end{align*}
		so, $f(su)=0$.
		\item
		\begin{align*}
			d_{\sigma}f(sv) &= (-1)^{N}f(v)-(-1)^{N}f(sd_{\sigma}v)= f(v)-f(sd_{\sigma}v)\\
			& =-b^{2}xuv-f(s(ab)) = -b^{2}xuv-af(s(b))  \\
			& =-b^{2}xuv-a(axvw-bxuw) =-b^{2}xuv-a^{2}xvw+abxuw \in (\Lambda^{5}V)^{19}\\
			& = d_{\sigma}(xuvw)
		\end{align*}
		so, $f(sv)=xuvw$

		\item
		\begin{align*}
			d_{\sigma}f(sw) &= (-1)^{N}f(w)-(-1)^{N}f(sd_{\sigma}w)= f(w)-f(sd_{\sigma}w)\\
			&=f(w)-f(s(b^2)) =f(w)-bf(sb)   \\
			& = abxvw-b^{2}xuw-b(axvw-bxuw) = abxvw-b^{2}xuw-abxvw+b^{2}xuw = 0
		\end{align*}
		Then, $f(sw)=0$.
	\end{itemize}

	Now we will determine $\beta_{1}$, for this
	\begin{itemize}
		\item \begin{align*}
			Df(1) & =(df-fd)(1)=df(1)=d\omega_{0} \\
			& =d(abxv-b^{2}xu) =d(ab)xv+abd(xv)-d(b^{2})xu-b^{2}d(xu)\\
			& =-adbxv-abxdv-2bdbxu+b^{2}xdu\\
			& =-a^{2}x^{2}v-abx(ab-ux)-2bax^{2}u+a^{2}b^{2}x =-a^{2}b^{2}x+a^{2}b^{2}x =0\\
			& =h_{0}^{0}(1)=D_{\sigma}\beta_{1}(1)=(d\beta_{1}-\beta_{1}d)(1) =d_{\sigma}\beta_{1}(1)=0.
		\end{align*}
Since $\beta_{1}(1)\in (\Lambda^{4} V)^{14}$, we have $\beta_{1}(1)=\alpha_{1}a^{2}xw+\alpha_{2}b^{2}xu+\alpha_{3}abxv+\alpha_{4}a^{2}uw+\alpha_{5}abuv$ for some $\alpha_{1},\alpha_{2},\alpha_{3},\alpha_{4},\alpha_{5}\in \mathbb{Q}$. It follows that:
		\begin{align*}
			d_{\sigma}\beta_{1}(1) & =d_{\sigma}(\alpha_{1}a^{2}xw+\alpha_{2}b^{2}xu+\alpha_{3}abxv+\alpha_{4}a^{2}uw+\alpha_{5}abuv) \\
			& =\alpha_{1}d_{\sigma}(a^{2}xw)+\alpha_{2}d_{\sigma}(b^{2}xu)+\alpha_{3}d_{\sigma}(abxv)+\alpha_{4}d_{\sigma}(a^{2}uw)+\alpha_{5}d_{\sigma}(abuv)\\
		%	& =-\alpha_{1}a^{2}xd_{\sigma}(w)-\alpha_{2}b^{2}xd_{\sigma}(u)-\alpha_{3}abxd_{\sigma}(v)+\alpha_{4}a^{2}(d_{\sigma}(u)w-ud_{\sigma}(w))+\alpha_{5}ab(d_{\sigma}(u)v-ud_{\sigma}(v))\\
			& =-\alpha_{1}a^{2}xb^2-\alpha_{2}b^{2}xa^2-\alpha_{3}abxab+\alpha_{4}a^{2}(a^2w-ub^2)+\alpha_{5}ab(a^2v-uab)\\
			& =-(\alpha_{1}+\alpha_{2}+\alpha_{3})a^{2}b^{2}x-(\alpha_{4}+\alpha_{5})a^2b^2u+\alpha_{4}a^4w+\alpha_{5}a^3bv=0
		\end{align*}
so, $\alpha_{1}+\alpha_{2}+\alpha_{3}=0$, which implies $\alpha_{3}=-\alpha_{1}-\alpha_{2}$, $\alpha_{4}=\alpha_{5}=0$ then
$$\beta_{1}(1)=\alpha_{1}a^{2}xw+\alpha_{2}b^{2}xu-(\alpha_{1}+\alpha_2)abxv
\text{ for some }\alpha_{1},\alpha_{2} \in \mathbb{Q}.$$
We set $\alpha_{1}+\alpha_{2}=0$ to simplify $\beta_1(1)$, then:
$$\beta_{1}(1)=\alpha a^{2}xw-\alpha b^{2}xu
\text{ for some }\alpha_{1} \in \mathbb{Q}.$$
	\end{itemize}
	\begin{itemize}
		\item \begin{align*}
			Df(sa) & =(df-fd)(sa)=df(sa)-f(a-sda)= df(sa)-f(a) \\
			&= d(-bxuv)-a^{2}bxv+ab^{2}xu= -d(bx)uv+bxd(uv)-a^{2}bxv+ ab^{2}xu\\
			&= -ax^{2}uv+a^{2}bxv-bxu(ab-ux)-a^{2}bxv+ab^{2}xu\\
			&= a^{2}bxv-ab^{2}xu-a^{2}bxv+ab^{2}xu=0\\
			&= h_{0}^{0}(sa)=D_{\sigma}\beta_{1}(sa)=(d_{\sigma}\beta_{1}-\beta_{1}d_{\sigma})(sa) = d_{\sigma}\beta_{1}(sa)-\beta_{1}(a)=0
		\end{align*}
		
so
		\begin{align*}
			d_{\sigma}\beta_{1}(sa) & = \beta_{1}(a) =\alpha a^{3}xw-\alpha ab^{2}xu =\alpha ax(a^{2}w-ub^{2})\\
			& =\alpha ax(d_{\sigma}(u)w-ud_{\sigma}w) =\alpha axd_{\sigma}(uw)=d_{\sigma}(-\alpha axuw)
		\end{align*}
		Thus, $\beta_{1}(sa)=-\alpha axuw$, $\alpha \in \mathbb{Q}$.
	\end{itemize}
	\begin{itemize}
		\item \begin{align*}
			Df(sb) & =(df-fd)(sb)=df(sb)-f(b-sdb) =df(sb)-f(b)+af(sx)=df(sb)-f(b) \\
			& =d(axvw-bxuw)-ab^{2}xv+b^{3}xu\\ &=d(ax)vw-axd(vw)-d(bx)uw+bxd(uw)-ab^{2}xv+b^{3}xu\\
			& =-ax(ab-ux)w+axv(b^{2}-2vx)-ax^{2}uw+a^{2}bxw-bxu(b^{2}-2vx)-ab^{2}xv+b^{3}xu\\
			& =-a^2bxw+ab^{2}xv+a^{2}bxw-b^{3}xu-ab^{2}xv+b^{3}xu =0\\
			& =h_{0}^{0}(sb)=D_{\sigma}\beta_{1}(sb)=(d_{\sigma}\beta_{1}-\beta_{1}d_{\sigma})(sb) = d_{\sigma}\beta_{1}(sb)-\beta_{1}(b)
		\end{align*}
so
		\begin{align*}
			d_{\sigma}\beta_{1}(sb) & = \beta_{1}(b)=\alpha a^{2}bxw-\alpha b^{3}xu =\alpha bx(a^{2}w-ub^{2})\\
			& =\alpha bx(d_{\sigma}(u)w-ud_{\sigma}w) =\alpha bxd_{\sigma}(uw)=d_{\sigma}(-\alpha bxuw)
		\end{align*}
		Then $\beta_{1}(sb)=-\alpha bxuw$, $\alpha \in \mathbb{Q}$.
	\end{itemize}
	\begin{itemize}
		\item \begin{align*}
			Df(sx) & =(df-fd)(sx)=df(sx)-f(x-sdx)) =df(sx)-f(x)=-f(x)=0 \\
			& =h_{0}^{0}(sx)=D_{\sigma}\beta_{1}(sx)=(d_{\sigma}\beta_{1}-\beta_{1}d_{\sigma})(sx)\\
			& = d_{\sigma}\beta_{1}(sx)-\beta_{1}(x)+\beta_{1}(sd_{\sigma}x) =d_{\sigma}\beta_{1}(sx)-\beta_{1}(x)
		\end{align*}
		So, $d_{\sigma}\beta_{1}(sx)=\beta_{1}(x)=0$, then $\beta_{1}(sx)=0$.
	\end{itemize}
	\begin{itemize}
		\item \begin{align*}
			Df(su) & =(df-fd)(su)=df(su)-f(u-sdu))  =df(su)-f(u)+af(sa)\\
			&=-f(u)+af(sa) =-abxvu+a(-bxuv)=abxuv-abxuv=0\\
			& =h_{0}^{0}(su)=D_{\sigma}\beta_{1}(su)=(d_{\sigma}\beta_{1}-\beta_{1}d_{\sigma})(su)\\
			& = d_{\sigma}\beta_{1}(su)-\beta_{1}(u)+\beta_{1}(sd_{\sigma}u) =d_{\sigma}\beta_{1}(su)-\beta_{1}(u)+a\beta_{1}(sa)
		\end{align*}
so
		\begin{align*}
			d_{\sigma}\beta_{1}(su) & = \beta_{1}(u)-a\beta_{1}(sa) =\alpha a^{2}xwu-a(-\alpha axuw)\\
			& =-\alpha a^{2}xuw+\alpha a^{2}xuw=0
		\end{align*}
		Then $\beta_{1}(su)=0$.
	\end{itemize}
	\begin{itemize}
		\item
		
		\begin{align*}
			Df(sv) & =(df-fd)(sv)=df(sv)-f(v-sdv) =df(sv)-f(v-s(ab-ux))\\
			& =df(sv)-f(v)+af(sb)-uf(sx) =d(xuvw)+b^{2}xuv+a^{2}xvw-abxuw\\
			& =d(xu)vw+xud(vw)+b^{2}xuv+a^{2}xvw-abxuw\\
			& =-a^{2}xvw+xu(ab-ux)w-xuv(b^{2}-2vx)+b^{2}xuv+a^{2}xvw-abxuw\\
			& =-a^{2}xvw+abxuw-b^{2}xuv+b^{2}xuv+a^{2}xvw-abxuw =0\\
			& =h_{0}^{0}(sv)=D_{\sigma}\beta_{1}(sv)=(d_{\sigma}\beta_{1}-\beta_{1}d_{\sigma})(sv) =d_{\sigma}\beta_{1}(sv)-\beta_{1}(v)+\beta_{1}(sd_{\sigma}v)\\
			& =d_{\sigma}\beta_{1}(sv)-\beta_{1}(v)+\beta_{1}(s(ab)) =d_{\sigma}\beta_{1}(sv)-\beta_{1}(v)+a\beta_{1}(sb)
		\end{align*}
so
		\begin{align*}
			d_{\sigma}\beta_{1}(sv) & = \beta_{1}(v)-a\beta_{1}(sb) =\alpha a^{2}xwv-\alpha b^{2}xuv+\alpha abxuw =\alpha x(a^{2}wv-b^{2}uv+abuw)\\
			& =\alpha x(d_{\sigma}(u)wv-d_{\sigma}(w)uv+d_{\sigma}(v)uw)=\alpha xd_{\sigma}(-uvw)=d_{\sigma}(\alpha xuvw)
		\end{align*}
		Then $\beta_{1}(sv)=\alpha xuvw$, for some $\alpha \in \mathbb{Q}$
	\end{itemize}
	\begin{itemize}
		\item \begin{align*}
			Df(sw) & =(df-fd)(sw)=df(sw)-f(w-sdw) =df(sw)-f(w-s(b^{2}-2vx))\\
			& =df(sw)-f(w)+bf(sb)-2vf(sx)=df(sw)-f(w)+bf(sb) \\
			& =-abxvw+b^{2}xuw+abxvw-b^{2}xuw =0\\
			& =h_{0}^{0}(sw)=D_{\sigma}\beta_{1}(sw)=(d_{\sigma}\beta_{1}-\beta_{1}d_{\sigma})(sw) =d_{\sigma}\beta_{1}(sw)-\beta_{1}(w)+\beta_{1}(sd_{\sigma}w)\\
			& =d_{\sigma}\beta_{1}(sw)-\beta_{1}(w)+\beta_{1}(s(b^{2})) =d_{\sigma}\beta_{1}(sw)-\beta_{1}(w)+b\beta_{1}(sb)
		\end{align*}
so
	\begin{align*}
	d_{\sigma}\beta_{1}(sw) & = \beta_{1}(w)-b\beta_{1}(sb) =-\alpha b^{2}xuw+\alpha b^{2}xuw=0
	\end{align*}
Then $\beta_{1}(sw)=0$.
	\end{itemize}
We now move on to determine $f^{1}$, notice that
	%$$f^{1}(1)=f(1)-\beta_1(1)=abxv-b^{2}xu-\alpha a^{2}xw+\alpha b^{2}xu.$$ Thus,
	\begin{itemize}

\item	\begin{align*}
f^{1}(1) & =f(1)-\beta_1(1) =abxv-b^{2}xu-\alpha a^{2}xw+\alpha b^{2}xu.\\
f^{1}(a) & =f(a)-\beta_{1}(a) =a^{2}bxv-ab^{2}xu-\alpha a^{3}xw+\alpha ab^{2}xu, \; \hbox{for some}\;  \alpha \in \mathbb{Q}.\\
f^{1}(sa) & =f(sa)-\beta_{1}(sa) =-bxuv+\alpha axuw, \; \hbox{for some}\; \alpha \in \mathbb{Q}
	\end{align*}
We calculate $Df^{1}(sa) $:
	 \begin{align*}
			Df^{1}(sa) & = (df^{1}-f^{1}d)(sa)=df^{1}(sa)-f^{1}(a-sda) =df^{1}(sa)-f^{1}(a) \\
			& =d(-bxuv+\alpha axuw)-a^{2}bxv+ab^{2}xu+\alpha a^{3}xw-\alpha ab^{2}xu \\
			& =-d(bx)uv+bxd(uv)+\alpha d(ax)uw-\alpha axd(uw)-a^{2}bxv+ab^{2}xu+\alpha a^{3}xw-\alpha ab^{2}xu  \\
			%& =a^{2}bxv-bxu(ab-ux)-\alpha a^{3}xw+\alpha axu(b^{2}-2vx)-a^{2}bxv+ab^{2}xu+\alpha a^{3}xw-\alpha ab^{2}xu\\
			&=a^{2}bxv-ab^{2}xu-\alpha a^{3}xw+\alpha ab^{2}xu-a^{2}bxv+ab^{2}xu+\alpha a^{3}xw-\alpha ab^{2}xu =0
		\end{align*}

\item	$f^{1}(b)=f(b)-\beta_{1}(b)=ab^{2}xv-b^{3}xu-\alpha a^{2}bxw+\alpha b^{3}xu$, for some $\alpha \in \mathbb{Q}$.\\
	$f^{1}(sb)=f(sb)-\beta_{1}(sb)=axvw-bxuw+\alpha bxuw$, for some $\alpha \in \mathbb{Q}$.
	
	We calculate $Df^{1}(sb)$:
 \begin{align*}
			Df^{1}(sb) & = (df^{1}-f^{1}d)(sb)=df^{1}(sb)-f^{1}(b-sdb)\\
			& =df^{1}(sb)-f^{1}(b)+f(s(ax))=df^{1}(sb)-f^{1}(b)+af(sx)) =df^{1}(sb)-f^{1}(b) \\
			& =d(axvw-bxuw+\alpha bxuw)- ab^{2}xv+b^{3}xu+\alpha a^{2}bxw-\alpha b^{3}xu\\
			& =-axd(vw)+bxd(uw)-\alpha bxd(uw)- ab^{2}xv+b^{3}xu+\alpha a^{2}bxw-\alpha b^{3}xu\\
			& =-ax(ab-ux)w+axv(b^{2}-2vx)+a^{2}bxw-bxu(b^{2}-2vx)-\alpha a^{2}bxw\\
			&+\alpha bxu(b^{2}-2vx)- ab^{2}xv+b^{3}xu+\alpha a^{2}bxw-\alpha b^{3}xu\\
			& =-a^{2}bxw+ab^{2}xv+a^{2}bxw-b^{3}xu-\alpha a^{2}bxw+\alpha b^{3}xu- ab^{2}xv+b^{3}xu+\alpha a^{2}bxw\\
			&-\alpha b^{3}xu =0
		\end{align*}

\item	$f^{1}(u)=f(u)-\beta_{1}(u)=-abxuv-\alpha a^{2}xwu=-abxuv+\alpha a^{2}xuw$, for some $\alpha \in \mathbb{Q}$.\\
	$f^{1}(su)=f(su)-\beta_{1}(su)=f(su)=0$.
	
		We calculate $Df^{1}(su)$:

	\begin{align*}
			Df^{1}(su) & = (df^{1}-f^{1}d)(su)=df^{1}(su)-f^{1}(u-sdu) =df^{1}(su)-f^{1}(u)+af^{1}(sa)\\
			&  =-f^{1}(u)+af^{1}(sa)  =abxuv-\alpha a^{2}xuw+a(-bxuv+\alpha axuw)\\
			& =abxuv-\alpha a^{2}xuw-abxuv+\alpha a^{2}xuw =0
		\end{align*}

\item	$f^{1}(x)=f(x)-\beta_{1}(x)=0$ and $f^{1}(sx)=f(sx)-\beta_{1}(sx)=0$.

We calculate $Df^{1}(sx)$:
 \begin{align*}
			Df^{1}(sx) & =df^{1}(sx)-f^{1}(x)+af^{1}(sx) \\
			& =df^{1}(sx)-f^{1}(x) =0
		\end{align*}
Clearly, $Df^{1}(sx) = 0$:

\item	$f^{1}(v)=f(v)-\beta_{1}(v)=-b^{2}xuv+\alpha a^{2}xvw+\alpha b^{2}xuv$, for some $\alpha \in \mathbb{Q}$.\\
	$f^{1}(sv)=f(sv)-\beta_{1}(sv)=xuvw-\alpha xuvw$, for some $\alpha \in \mathbb{Q}$.
	
We calculate $Df^{1}(sv)$:
 \begin{align*}
			Df^{1}(sv) & =df^{1}(sv)-f^{1}(v)+f^{1}(sdv) =df^{1}(sv)-f^{1}(v)+f^{1}(s(ab-ux)) \\
			& =df^{1}(sv)-f^{1}(v)+af^{1}(sb)-uf^{1}(sx)= df^{1}(sv)-f^{1}(v)+af^{1}(sb)\\
			& =d(xuvw-\alpha xuvw)+b^{2}xuv-\alpha a^{2}xvw-\alpha b^{2}xuv+a(axvw-bxuw+\alpha bxuw) \\
			& =d(xu)vw+xud(vw)-\alpha d(xu)vw-\alpha xud(vw)+b^{2}xuv-\alpha a^{2}xvw-\alpha b^{2}xuv\\
			&+a^{2}xvw-abxuw+\alpha abxuw\\
			& =-a^{2}xvw+xu(ab-ux)w-xuv(b^{2}-2vx)+\alpha a^{2}xvw-\alpha xu(ab-ux)w\\
			&+\alpha xuv(b^{2}-2vx)\\
			&+b^{2}xuv-\alpha a^{2}xvw-\alpha b^{2}xuv+a^{2}xvw-abxuw+\alpha abxuw\\
			& =-a^{2}xvw+abxuw-b^{2}xuv+\alpha a^{2}xvw-\alpha abxuw+\alpha b^{2}xuv\\
			&+b^{2}xuv-\alpha a^{2}xvw-\alpha b^{2}xuv+a^{2}xvw-abxuw+\alpha abxuw =0
		\end{align*}

\item	$f^{1}(w)=f(w)-\beta_{1}(w)=abxvw-b^{2}xuw+\alpha b^{2}xuw$, for some $\alpha \in \mathbb{Q}$.\\
	$f^{1}(sw)=f(sw)-\beta_{1}(sw)=0$.
	
	We calculate $Df^{1}(sw)$ :
	
 \begin{align*}
			Df^{1}(sw) & =df^{1}(sw)-f^{1}(w)+f^{1}(sdw) =-f^{1}(w)+f^{1}(s(b^{2}-2vx))\\
			&=-f^{1}(w)+bf^{1}(sb)-2vf^{1}(sx)\\
			&  = -abxvw+b^{2}xuw-\alpha b^{2}xuw+b(axvw-bxuw+\alpha bxuw)\\
			& = -abxvw+b^{2}xuw-\alpha b^{2}xuw+abxvw-b^{2}xuw+\alpha b^{2}xuw =0
		\end{align*}
	\end{itemize}
	Thus, $f^{1}$ is a generating class of $Ext_{(\Lambda V,d)}(\mathbb{Q},(\Lambda V,d))$.
\end{example}

\section{Concluding remarks}
\begin{enumerate}
%\begin{align*}
\item
It is worth noting that if  $(\Lambda V,d_{k})$, hence $(\Lambda V,(d_{k})_{\sigma})$, is not elliptic, then  $(f_{k,\sigma}^t)^l(1)$ is not necessarily the representative that gives the exact value of $e_0(X)$. Indeed, as shown in the proof of Theorem \ref{th1.1}, when
% in such a case, 
%we start our algorithm  with a
the representative $f_{k,\sigma}(1)$ of the top class of $(\Lambda V,(d_{\sigma})_{k})$ is a coboundary, since $(\Lambda V,d_{\sigma})$ is itself elliptic, $(f_{k,\sigma}^t)(1)$
  %$[f_{k,\sigma}]$, from which we start our algorithm, has a bidgree of the firt term of \eqref{Ext Milnor Moore spec} while 
   % $[f_{k,\sigma}(1)] = ev_{(\Lambda V,(d_{k})_{\sigma})}([f_{k,\sigma}])=0$ \cite[Corollary 3]{L-M1}. 
   % is  of $H^N(\Lambda V,(d_{\sigma})_{k})= E_k^{*,*}$, the first term of \eqref{Milnor Moore spec},  of $(\Lambda V,(d_{\sigma})_{k})$, that is, we start with the of a particular class $[f_{k,\sigma}(1)]$  in $H^N(\Lambda V,(d_{\sigma})_{k})$  whose representative is  produce that of 
   %that is $f_{k,\sigma}(1)$ is a coboundary in $(\Lambda V,(d_{k})_{\sigma})$.
    is  not a coboundary as is $(f_{k,\sigma}^t)^l(1)$  by the convergence of the spectral sequence (\ref{odd spec}). Thus, 
    $[(f_{k,\sigma}^t)^l(1)]$ is the fundamental class of $(\Lambda V,d)$
    
   It follows from the algorithm that, if $(p,q)$ designates the bi-degree  of $f_{k,\sigma}$, then $(f_{k,\sigma}^t)(1)\in \Lambda ^{\geq p}V$ and also $(f_{k,\sigma}^t)^l(1)\in \Lambda ^{\geq p}V$. By \cite[Theorem 2]{B-R}, the fundamental class $\omega \in H^N(\Lambda V,d)$ that results from some class $\omega_{0} \in H^N(\Lambda V,(d_{\sigma})_{k})$  should be   represented by   $\alpha \in \Lambda^{\geq p'}V$ with $p'\geq p$.  Thus $e_0(\Lambda V,d) =p'\geq p$.
       
       %has also a bi-degree in the ifinity term $E_{\infty}$, but $p$ is not necessarily the greatest one among the possible other represents of the fundamental class (cf. Example 4.2). It therefore remains to determine the representative "of maximum length" of $[(f_{k,\sigma}^t)^l(1)]$ which thus gives $e_0(X)$.
  \item      
    In the case where $(\Lambda V,(d_{k})_{\sigma})$ is elliptic, $[f_{k,\sigma}(1)]\in H^N(\Lambda V,(d_{\sigma})_{k})$ is the fundamental class of $H^N(\Lambda V,(d_{\sigma})_{k})$. Using the algorithm in \S 4, this persists to yield the fundamental class $[f_{k,\sigma}^t(1)]\in H^N(\Lambda V,d_{\sigma})$, and using the algorithm in \S 3 we get the desired fundamental class $[(f_{k,\sigma}^t)^l]$ of $(\Lambda V,d)$. It follows that $e_0(\Lambda V,(d_{\sigma})_{k}) = e_0(\Lambda V,d_{\sigma}) = e_0(\Lambda V,d)$. This is determined by the explicit formula \cite[Theorem 6]{L-M02}:
    $$e_0(\Lambda V,(d_{\sigma})_{k}) = \dim{V^{odd}} + (k-2)\dim{V^{even}}.$$ 
   % our combined algorithm (\ref{finite-cace}) from the introduction. 
    This demonstrates the extensibility of our method to the general case.
  \item  
     For details on the requirements imposed to accomplish the algorithm of \S 4, see Example \ref{exp4.5}. From this example, we can see the NP-hard complexity proven in \cite[Theorem 2]{L-M00} when determining this type of invariant.
    % \end{align*}
     \end{enumerate}


\begin{thebibliography}{9}
    \bibitem{A-R} A. Acharqy and Y. Rami, On some Lusternick-Schnirelmann type invariants, \emph{Proceeding of The 2nd International Conference on Differential Geometry}, October 28-31 2024, Fez, Morocco, To apper in  Note di Matematica.
	\bibitem{Al-HA} C. Allday and S. Halperin, Lie group actions on spaces of finite rank, \emph{The Quarterly Journal of Mathematics}, Vol. 29, No. 1, March 1978, pp. 63--76.
	
%	\bibitem{AV} L. L. Avramov, H. Foxby, and J. Lescot, Bass series of local ring homomorphisms of finite flat dimension, \emph{Transactions of the American Mathematical Society}, Vol. 335, No. 2, 1993, pp. 497--523.
	
	%\bibitem{Bis95} L. Bisiaux, Grade et invariant de Toomer d'une application, Ph.D. thesis, Université de Lille, 1995.
	
	%\bibitem{Bis99} L. Bisiaux, Depth and Toomer's invariant, \emph{Topology and its Applications}, Vol. 97, 1999, pp. 207--215.
	
	\bibitem{FHL}
			Y. Félix, S. Halperin and Jean-Michel Lemaire, \textit{The rational LS-category of products and of Poincaré duality complexes}, Topology vol. 37, No. 4, pp. 749-756, 1998.
	
	
	\bibitem{FHT88} Y. Félix,  S. Halperin and J. C. Thomas, Gorenstein spaces, \emph{Advanced in Mathematics}, Vo. 71, No. 1, 1988, pp. 92--112.
	
	\bibitem{FHT91} Y. Félix, S. Halperin, and J. C. Thomas, Elliptic spaces, \emph{Bulletin of the American Mathematical Society (New Series)}, Vol. 25, No. 1, July 1991, pp. 69--73.
	
	\bibitem{FHT93} Y. Félix, S. Halperin, and J.-C. Thomas, Elliptic spaces II, \emph{Enseign. Math}, Vol. 39, 1993, pp. 25--32.
	
	\bibitem{FHTb} Y. Félix, S. Halperin, and J.-C. Thomas, \emph{Rational Homotopy Theory}, Graduate Texts in Mathematics, Vol. 205, Springer-Verlag, New York, 2001.
	
	
	\bibitem{Gin} M. Ginsburg, On L.S. category, \emph{Annals of Mathematics}, Vol. 77, 1963, pp. 89--96.
	

	
%	\bibitem{FHT} Y. Félix, S. Halperin, and J.-C. Thomas, Gorenstein spaces, \emph{Advances in Mathematics}, Vol. 71, 1988, pp. 92--112.
	
	%\bibitem{H92} S. Halperin, Universal enveloping algebras and loop space homology, \emph{Journal of Pure and Applied Algebra}, Vol. 83, 1992, pp. 237--282.
	
	
	\bibitem{H77} S. Halperin, Finiteness in the Minimal Models of Sullivan. \textit{Transactions of the American Mathematical Society}, 230, 1977, 173–99. 
	
	\bibitem{Le02} L. Lechuga.  Computing the Sullivan Milnor–Moore S.S.
	 and the rational LS category of certain spaces. \textit{Topology and its Applications}, 125 (2002) 581–591. 
	
	\bibitem{L-M00} L. Lechuga and  A. Murillo. Complexity in rational homotopy, \emph{Topology}, vol. 39,  2000, 89-94.
	
	\bibitem{L-M1} L. Lechuga and  A. Murillo. The fundamental class of a rational space, the graph coloring problem and other classical decision problems, \emph{Bull. Belg. Math. Soc. Simon Stevin, 8 (2001), 541-467.}
	
	\bibitem{L-M02} L. Lechuga and A. Murillo, A formula for the LS-category of certain spaces, \emph{Annales de l'Institut Fourier}, Vol. 52, No. 5, 2002, pp. 1505--1522.
	
	
	\bibitem{Mc} J. McCleary, \emph{A User's Guide to Spectral Sequences}, Cambridge University Press, Cambridge, 2001.
	
	%\bibitem{MB} A. Murillo, The Behaviour of $Ext_{A}(R,A)$ through a morphism, \emph{Journal of Algebra}, vol. 192, 1997, pp. 477-503.
	
	\bibitem{Mu} A. Murillo, The top cohomology class of certain spaces, \emph{Journal of Pure and Applied Algebra}, Vol. 84, 1993, pp. 209--214.
	
	\bibitem{Mu2} A. Murillo. On the evaluation map. \emph{Trans. Amer. Math. Soc.}, vol. 393, 1993, pp. 611--622.
	
	\bibitem{Mu94} A. Murillo, The evaluation map of some Gorenstein algebras, \emph{Journal of Pure and Applied Algebra}, Vol. 91, 1994, pp. 209--218.
	

  \bibitem{R1} Y. Rami, Dimension globale et classe fondamentaled'un espace, Annales de l'institut Fourier 49 (1999) 333--350.
\bibitem{R2}  Y. Rami, A new lower bound for LS-category. Arab. J. Math. 7, 39--47 (2018).
\bibitem{B-R} K. Boutahir and Y. Rami, On LS-Category of a Family of Rational Elliptic Spaces II,  extracta mathematicae Vol. 31, Num. 2, 235--250 (2016).
\bibitem{Sat18}  H. Sati, Framed M-branes, corners, and topological invariants, J. Math. Phys. 59 (2018), no. 6,
  062304, 25.
\bibitem{VPS76} M. Vigué-Poirrier and D. Sullivan, The homology theory of the closed geodesic problem, J. Differential Geometry 11 (1976), no. 4, 633–-644.
\bibitem{Sul78} D. Sullivan, Infinitesimal computations in topology, Publ. Math. IHES (1978) 269--331.

\end{thebibliography}
\end{document}